\newtheorem{theorem}{Theorem}
\newtheorem{lemma}[theorem]{Lemma}
\newtheorem{corollary}[theorem]{Corollary}
\newtheorem{prop}[theorem]{Proposition}
\newtheorem{definition}{Definition}
\newtheorem{remark}{Remark}
\newtheorem{example}{Example}
\def\beq{\begin{equation}}
\def\eq{\begin{equation}}
\def\eeq{\end{equation}}
\def\qe{\end{equation}}
\def\beqn{\begin{eqnarray*}}
\def\eeqn{\end{eqnarray*}}
\def\bitem{\begin{itemize}}
\def\eitem{\end{itemize}}
\def\benum{\begin{enumerate}}
\def\eenum{\end{enumerate}}
\def\bmult{\begin{multline*}}
\def\emult{\end{multline*}}
\def\bcenter{\begin{center}}
\def\ecenter{\end{center}}
\def\balign{\begin{align}}
\def\ealign{\end{align}}
\def\cC{\mathcal{C}}
\def\cF{\mathcal{F}}
\def\cH{\mathcal{H}}
\def\cL{\mathcal{L}}
\def\cM{\mathcal{M}}
\def\bI{\boldsymbol{I}}
\def\bbE{\mathds{E}}
\def\bbF{\mathds{F}}
\def\bbN{\mathds{N}}
\def\bbZ{\mathds{Z}}
\newcommand{\tauc}{\tau}   
\def\Pm{\mathcal{P}_m}       
\def\Qmk{\mathcal{Q}_m^k}       
\def\DP{D_{\Pm}} 
\newcommand{\dd}{\mathrm{d}} 
\newcommand{\sinc}{\text{sinc}} 
\newcommand{\1}{\mathds{1}} 
\def\iid{\stackrel{\rm iid}{\sim}} 
\newcommand{\com}{c_{0,m}} %
\def\bR{\mathds{R}} 
\def\etaa{\gamma}  
\def\bbH{\mathds{H}} 
\def\bbL{\mathds{L}} 
\newcommand{\E}{\mathds{E}} 
\def\vn{\Gamma_n}
\begin{document}
\sloppy

\begin{frontmatter}

\title{SuperMix: Sparse Regularization for Mixtures}
\runtitle{L1 Regularization of Mixture Problems}

\begin{aug}

\author{\fnms{Y.} \snm{De Castro${}^{\star}$}\ead[label=e1]{yohann.de-castro@ec-lyon.fr},}
\author{\fnms{S.} \snm{Gadat${}^{\circ\ddagger}$}\ead[label=e2]{sebastien.gadat@tse-fr.eu},}
\author{\fnms{C.} \snm{Marteau${}^{\bullet}$}\ead[label=e3]{marteau@math.univ-lyon1.fr}}
\and
\author{\fnms{C.} \snm{Maugis-Rabusseau${}^{\dagger}$.}\ead[label=e4]{maugis@insa-toulouse.fr}}

\runauthor{De Castro, Gadat, Marteau and Maugis-Rabusseau}

\affiliation{ENPC}
\address{${}^\star$Institut Camille Jordan, CNRS UMR 5208\\ École Centrale de Lyon\\
 F-69134 Écully, France.
}  

\address{${}^{\circ}$ Toulouse School of Economics, CNRS UMR 5314\\ Universit\'e Toulouse 1 Capitole\\
Esplanade de l'Université, Toulouse, France.\\
${}^{\ddagger}$ Institut Universitaire de France.} 

\address{${}^{\bullet}$Institut Camille Jordan, CNRS UMR 5208\\
 Universit\'e Claude Bernard Lyon 1  \\ F-69622 Villeurbanne, France.}

\address{${}^{\dagger}$Institut de Math\'ematiques de Toulouse; UMR5219\\
Universit\'e de Toulouse; CNRS\\
INSA, F-31077 Toulouse, France}
\end{aug}

\begin{abstract}
This paper investigates the statistical estimation of a discrete mixing measure $\mu^0$ involved in a kernel mixture model. Using some recent advances in $\ell_1$-regularization over the space of measures, we introduce a ‘‘data fitting and regularization'' convex program for estimating $\mu^0$ in a grid-less manner  from a sample of mixture law, this method is referred to as Beurling-LASSO. 

Our contribution is two-fold: we derive a lower bound on the bandwidth of our data fitting term depending only on the support of $\mu^0$ and its so-called ‘‘minimum separation'' to ensure quantitative support localization error bounds; and under a so-called ‘‘non-degenerate source condition'' we derive  a non-asymptotic support stability property. This latter shows that for a sufficiently large sample size $n$, our estimator has exactly as many weighted Dirac masses as the target~$\mu^0$, converging in amplitude and localization towards the true ones. 
Finally, we also introduce some tractable algorithms for solving this convex program based on ‘‘Sliding Frank-Wolfe'' or ‘‘Conic Particle Gradient Descent''.

Statistical performances of this estimator are investigated designing a so-called ‘‘dual certificate'', which is appropriate to our setting. Some classical situations, as \textit{e.g.} mixtures of super-smooth distributions (\textit{e.g.} Gaussian distributions) or ordinary-smooth distributions (\textit{e.g.} Laplace distributions), are discussed at the end of the paper. 
\end{abstract}

\begin{keyword}[class=MSC]
\kwd[Primary: ]{62G05}
\kwd{90C25}
\kwd[; Secondary: ]{49M29}
\end{keyword}

\begin{keyword}
\kwd{Beurling Lasso; Mixture recovery; Dual certificate; Kernel approach; Super-resolution}
\end{keyword}

\end{frontmatter}

\maketitle

\section{Introduction}
\subsection{Mixture problems}
In this paper, we are interested in the estimation of a mixture distribution $\mu^0$ using some i.i.d. observations $\mathbf{X}:=(X_1,\ldots,X_n)\in(\bR^d)^n$ with the help of some $\ell_1$-regularization methods. More precisely, we consider the specific situation of a discrete distribution $\mu^0$ that is given by a finite sum of $K$ components:
\eq
\label{eq:target}
\mu^0:=\sum_{k=1}^{K} a_k^0\delta_{t_k}
\qe
where the set of positive weights $(a_k^0)_{1 \leq k \leq K}$ defines a discrete probability distribution, \textit{i.e.}  each $\delta_{t_k}$ is a Dirac mass at point $t_k\in \bR^d$ while
\[
\sum_{k=1}^Ka_k^0=1 \qquad \text{and} \qquad \forall k \in [K]:=\{1,\ldots,K\}:  \quad  a_k^0 >0\,.
\]
 We denote by $S^0:=\{t_1,\ldots,t_K\}$ the support of the target  distribution $\mu^0$. This  distribution is indirectly observed: we assume that our set of observations $\mathbf{X}$ in $\bR^d$ satisfies
  \[
X_i\iid \sum_{k=1}^{K} a_k^0 F_{t_k}\,,\quad  \forall i\in[n]:=\{1,\ldots,n\}\,,
\]
where $(F_{t})_{t\in\bR^d}$ is a family of \textit{known} distributions on $\bR^d$. Below, we consider the so-called location model where each distribution $F_t$ has a density with respect to the Lebesgue measure on $\bR^d$ given by the density function $\varphi(\cdot-t)$, where $\varphi$ denotes a \textit{known} density function. In this case, the density function $f^0$ of the data $\mathbf{X}$ can be written as a convolution, namely
\eq
\label{eq:target_prediction}
f^0(x) =\sum_{k=1}^{K} a_k^0\varphi(x-t_k)\,,\quad \forall x\in\bR^d\,.
\qe
\begin{remark}
Equation~\eqref{eq:target_prediction}
has a simple interpretation in the context considered here: the law of one observation $X_i$ is given by a sum of two independent random variables $U^0$ and $E$:
\[
X_i\sim U^0+E\,,
\]
where $U^0\in S^0$ is distributed according to $\mu^0$ $($i.e., the mixing law~\eqref{eq:target}$)$  and   $E$ has a distribution of density $\varphi$ with respect to the Lebesgue measure on $\bR^d$. In this context, recovering the distribution of $U^0$ from the sample $\mathbf{X}$ appears to be an inverse (deconvolution) problem. The main difference with former contributions (see, e.g.~\cite{Meister} for a comprehensive introduction) is that the probability measure associated to $U^0$ is discrete, which avoids classical regularization approaches. 
\end{remark}

Equation~\eqref{eq:target_prediction} is known in the literature as a \textit{mixture model}. A mixture model allows to describe some practical situations where a population of interest is composed of $K$ different sub-populations, each of them being associated to a proportion $a_k^0$ and to a location parameter~$t_k$. Mixture models have been intensively investigated during the last decades and have been involved in several fields as biology, genetics, astronomy, among others. We refer to~\cite{bookmixture06,McLachlan_Peel} for a complete  overview.

\subsection{Previous works}
The main goal of this paper is to provide an estimation of the discrete mixture law $\mu^0$ introduced in~\eqref{eq:target}. When the component number $K$ is available, the maximum likelihood estimator (MLE) appears to be the most natural candidate. Although no analytic expression is available for the model~\eqref{eq:target_prediction}, it can be numerically approximated. We mention for instance the well-known EM-algorithm  and refer to~\cite{wu83}, who established some of the most general convergence results known for the EM algorithm. 
However, the MLE (and the related EM-algorithm) does not always provide satisfactory results. First, the MLE suffers from several drawbacks (see, \textit{e.g.},~\cite{Lecam}) such as non-uniqueness of the solution, and second, obtaining theoretical guarantees for the EM-algorithm is still a difficult question (see, \textit{e.g.}, the recent contributions~\cite{balakrishnan17,dwivedi18}). Several alternative methods have been proposed in this context. Some contributions extensively use the MLE point of view to derive consistent properties in general semi-parametric models, including the Gaussian case (see \textit{e.g.}~\cite{vanderVaartmix}), whereas some other ones developed some contrast functions in a semi-parametric framework: with symmetry and number of component assumptions in~\cite{BMV_2006,BV_2014}, or with a fixed number of component settings in~\cite{Gadat_Kahn_Marteau_Maugis} and a $L^2$ contrast.
As a particular case,  the Gaussian setting has attracted a lot of attention:  a model selection strategy is developed in~\cite{Cathy} and a specific analysis of the EM algorithm with two Gaussian components is provided in~\cite{xu16}. The article~\cite{balakrishnan17} provides a general theoretical framework to analyze the
convergence of the EM updates in a neighborhood of the MLE, and derives some non-asymptotic
bounds on the Euclidean error of sample-based EM iterates. 
 Some of the aforementioned papers provide better results (for instance with parametric rates of convergence for the estimation of the weights $a^0_k$, see \textit{e.g.}~\cite{nguyen13,KH15}), but are obtained in more constrained settings: known fixed number of components (often $K=2$), univariate case, ...

{Our estimator will be any solution to a convex program and it does not require to know the number $K$ of components in the mixing law $\mu^0$. This estimator is based on ideas from super-resolution and ‘‘off-the-grid'' methods~\cite{Bhaskar_Tang_Recht12,Candes_FernandezGranda_14}, where one aims at recovering a discrete measure from linear measurements. The so-called ‘‘{\it sparse deconvolution}'' problem fits this framework since it concerns with estimating a target measure from the observation of some product of convolution between the target measure and known kernel as $f^0$ in~\eqref{eq:target_prediction}. Note that in mixture models, we do not observe $f^0$ but rather a sample drawn from it, and standard strategies such that $(1.15)$ in~\cite{Candes_FernandezGranda_14} cannot be invoked here. However, remark that one of the main advances has been the construction of the so-called ‘‘{\it dual certificate}'' in~\cite{Candes_FernandezGranda_14} which is the key to demonstrate the success of discreteness inducing norm regularization (see {\it e.g. }\cite{DeCastro_Gamboa_12,Candes_FernandezGranda_14,Duval_Peyre_JFOCM_15,de2017exact}).

Recent works have addressed  mixture models while assuming that the sampling law is known. }For example, the authors of~\cite{poon2018support}  study some dimension reduction techniques such as random ‘‘sketching'' problems using ‘‘off-the-grid'' minimization scheme. They prove convergence of random feature kernel towards the population kernel. We emphasize that the statistical estimation in terms of the sample size~$n$ has not been considered in the super-resolution research field. To the best of our knowledge, this paper is the first that bridges the gap between the recent ‘‘off-the-grid'' sparse regularization methods and a sharp statistical study of this estimation procedure in terms of the sample size and the bandwidth of the data fitting term.
 
\subsection{Contribution}
In this paper, we propose an estimator $\hat{\mu}_n$ of the measure 
$\mu^0$ (see Equation~\eqref{eq:target}) inspired by some recent results in $\ell_1$-regularization on the space of measures, sometimes referred to as super-resolution methods (see, \textit{e.g.},~\cite{DeCastro_Gamboa_12, Candes_FernandezGranda_14}). We investigate the statistical theoretical performances of $\hat{\mu}_n$. This estimator $\hat \mu_n$ is built according to the minimization of a criterion on the space of real measures on $\bR^d$ and does not require any grid for its computation. {The stability result and the construction of the {\it dual certificate} given in~\cite{Candes_FernandezGranda_14} played a central role in our work to obtain the statistical recovery. However, these authors work on the torus and their construction provides periodic dual certificates which are not useful in our present framework. One important contribution of this paper is thus a novel dual certificate construction, interpolating phases/signs on $\mathds R^d$ (and not the $d$-dimensional torus as in~\cite{Candes_FernandezGranda_14}). We also investigate the stability with respect to \textit{sampling} of our estimation strategy, \textit{i.e.} the ability of our procedure to recover the mixture when we compute $\hat{\mu}_n$ up to some i.i.d. observations $(X_i)_{i \in [n]}$ with $n \to +\infty$, which is a different problem from the stability issue studied in~\cite{Candes_FernandezGranda_14} that asserts the variation of the super-resolution solutions with  respect to an $\ell_1$ norm control on the low-frequency data.}

The minimized criterion requires to tune two parameters: a bandwidth parameter of the data fitting term denoted by  $m\geq 1$  and an $\ell_1$-regularization tuning parameter denoted by $\kappa>0$ below. We prove that the bandwidth parameter $m$ depends only on the intrinsic hardness of estimating the support~$S^0$ of the target $\mu^0$ through the so-called ‘‘minimum separation'' $\Delta$ introduced in~\cite{Candes_FernandezGranda_14} that refers to the minimal distance between two spikes:
 \[
 \Delta := \min_{k \neq \ell} \|t_k-t_\ell\|_2\,.
 \]

\noindent
We now assess briefly the performances of $\hat \mu_n$. We emphasize  that a complete version is displayed in Theorem \ref{thm:main} (for points $i)$ and $ii)$) and Theorem~\ref{thm:yoyo} (for point $iii)$) later on. 
 
\begin{theorem}
{Assume that the kernel $\varphi$ satisfies~\eqref{eq:H1_intro} with $\eta=4m$ (see Section~\ref{s:super} for a definition) for a bandwidth $m$ verifying
\eq
\label{eq:bias}
 m \gtrsim \sqrt{K} d^{3/2} \Delta^{-1}_{+}  \quad \mathrm{where} \quad \Delta_+ = \min(\Delta,1).
\qe
Then, some quantity $\mathcal{C}_{m}(\varphi)>0$ exists such that, setting
\eq
\label{eq:variance}
\rho_n=\mathcal O\Big(\sqrt{\frac{m^d}{n}}\,\Big),
\qe 
 our estimator $\hat{\mu}_n$ satisfies:}
\begin{itemize}
\item[$i)$] Spike detection property:
$$\forall A \subset \bR^d, \quad  \E[\hat\mu_n(A)]  \gtrsim \rho_n\mathcal{C}_{m}(\varphi) \quad\Longrightarrow \quad \min_{k\in[K]} \inf_{t\in A}\| t-t_k\|_2^2 \lesssim \frac{1}{m^2}.  
$$

\item[$ii)$] Weight reconstruction property:
$$
\forall k \in [K]: \qquad
\E \left[ |a_k^0-\hat{\mu}_n(\bbN_k(\epsilon))| \right] \lesssim 
\rho_n\mathcal{C}_{m}(\varphi),
$$
where 
$\bbN_k(\epsilon)$ denotes a region that contains $t_k$ and $\epsilon=\epsilon_{n,m}(d)$ is made explicit later on.
\item[$iii)$]
Support stability property: if $\varphi$ satisfies the \emph{Non-Degenerated Bandwidth} condition~\eqref{eq:NDSCB} (see Section~\ref{sec:support} for a definition), for $n$ large enough, with an overwhelming probability, $\hat\mu_n$ can be written as
\[
\hat\mu_n=\sum_{k=1}^{\hat K} \hat a_k\delta_{\hat t_k}\,,
\]
with $\hat K=K$. Furthermore, $(\hat a_k,\hat t_k)\to(a^0_k,t_k)$ for all $k\in[K]$, as $n$ tends to infinity.
\end{itemize}
\end{theorem}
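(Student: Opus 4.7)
The plan is to split the analysis into a deterministic variational part and a stochastic concentration part, applying each of the three statements in turn. The estimator $\hat\mu_n$ solves a convex program of the form $\tfrac12\|A_m\mu-y_n\|^2+\kappa\|\mu\|_{TV}$, where $A_m$ denotes the bandwidth-$m$ smoothing operator built from $\varphi$ and $y_n$ is an empirical proxy for $A_m\mu^0$ constructed from the sample $\mathbf{X}$. The standard BLASSO optimality conditions (writing KKT for this convex program) yield an inequality of the form $\langle p,\hat\mu_n-\mu^0\rangle\lesssim \rho_n\mathcal{C}_m(\varphi)$ for every admissible \emph{dual certificate} $p$, where $\rho_n=\|y_n-A_m\mu^0\|$. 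So the whole statement reduces to (a) constructing good certificates, (b) controlling $\rho_n$, and (c) deriving local uniqueness for $(iii)$.

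The deterministic heart of $(i)$ and $(ii)$ is the construction of a non-periodic dual certificate $p_m\colon\bR^d\to\bR$ with $p_m(t_k)=1$, $\nabla p_m(t_k)=0$ and negative-definite Hessian at each $t_k$, $|p_m(x)|<1$ away from $S^0$, together with quantitative quadratic control $1-p_m(x)\gtrsim m^2\|x-t_k\|^2$ on a neighborhood of $t_k$. The interpolation is realized by combining localized bumps built from the Fourier content of $\varphi$ truncated at frequency $m$; the separation hypothesis $m\gtrsim\sqrt{K}\,d^{3/2}\Delta_+^{-1}$ is exactly what is needed to control the cross-interactions between bumps centered at different $t_k$'s and obtain the required sign/curvature properties. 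Plugging $p_m$, resp.\ localized variants peaked in a neighborhood $\bbN_k(\epsilon)$ of $t_k$, into the optimality inequality converts the right-hand side into (i) a quadratic penalty $m^2\inf_k\|t-t_k\|^2$ giving the $1/m^2$ localization rate, and (ii) a bound on $|a_k^0-\hat\mu_n(\bbN_k(\epsilon))|$. The stochastic input $\rho_n=O(\sqrt{m^d/n})$ is then obtained by Bernstein/Hoeffding-type concentration for the empirical operator $y_n-A_m\mu^0$ in the relevant Hilbert norm; the factor $m^d$ reflects the dimension of the effective frequency ball.

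The part I expect to be genuinely delicate is $(iii)$. The idea, borrowed from the Duval--Peyr\'e program, is to exploit the Non-Degenerate Bandwidth condition to go beyond mere existence of a certificate and obtain a \emph{vanishing derivatives} pre-certificate which is automatically a valid certificate; the KKT system of the BLASSO can then be reparameterized as a smooth system in $(a,t)=(\hat a_k,\hat t_k)_{k\in[K]}$ whose Jacobian at the ground truth is non-singular under NDSCB. An implicit function theorem argument produces a neighborhood $\mathcal U$ of $A_m\mu^0$ in the observation space in which the BLASSO solution is unique, has exactly $K$ weighted Diracs, and depends smoothly on the data; on the event $\{y_n\in\mathcal U\}$ one reads off $\hat K=K$ and the convergence $(\hat a_k,\hat t_k)\to(a_k^0,t_k)$ directly from continuity.

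The main obstacles are two-fold. First, the certificate construction in $(i)$--$(ii)$ must be performed on $\bR^d$ rather than on the torus as in~\cite{Candes_FernandezGranda_14}, so the periodicity-based trigonometric interpolation must be replaced by a localized construction compatible with the decay of $\varphi$, and the separation constant must be tracked carefully through the $K$ and $d$ dependence to obtain the clean scaling $\sqrt{K}\,d^{3/2}\Delta_+^{-1}$. Second, for $(iii)$ one needs to transfer the Duval--Peyr\'e local analysis, originally designed for deterministic adversarial noise in a fixed operator, to the random-sampling regime: this requires showing that the event $\{y_n\in\mathcal U\}$ has overwhelming probability (through $\rho_n\to 0$) and that the implicit function argument is compatible with the $m$-dependent scalings used in the certificate estimates — so that the bandwidth chosen for the asymptotic part $(iii)$ can be kept fixed while $n\to\infty$ without destroying the certificate properties.
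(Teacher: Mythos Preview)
Your outline is broadly correct and matches the paper's architecture for parts $(i)$ and $(ii)$: the paper builds a dual certificate $\Pm=p_m^2$ from tensorized $\mathrm{sinc}^4$ bumps (not from $\varphi$ directly), proves the quadratic decay $1-\Pm(t)\gtrsim m^2\|t-t_k\|^2$ on near regions and a uniform gap on the far region under $m\gtrsim\sqrt{K}d^{3/2}\Delta_+^{-1}$, and then plugs $\Pm$ and a companion certificate $\Qmk$ (interpolating $\delta_{ik}$ on $S^0$) into a Bregman-divergence inequality derived from the BLASSO optimality. One correction: the bounds in $(i)$ and $(ii)$ are stated in \emph{expectation}, so the paper uses only the second moment $\E\|\Gamma_n\|_\bbL^2\leq\rho_n^2$ and no Bernstein/Hoeffding concentration is invoked there; concentration enters only in $(iii)$.

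For $(iii)$ your route and the paper's diverge. You propose the primal-side implicit function theorem of Duval--Peyr\'e: write the first-order conditions as a smooth system in $(a,t)$, check the Jacobian is invertible under NDB, and conclude via IFT. The paper instead argues entirely on the \emph{dual} side: it shows that the dual solution $c_\kappa$ of the noiseless problem converges to the minimal-norm certificate $c_0$ as $\kappa\to0$, then uses non-expansiveness of the Hilbert projection to get $\|c_{\kappa,n}-c_\kappa\|_\bbL\leq\|\Gamma_n\|_\bbL/\kappa$. Under NDB, the empirical dual polynomial $\mathcal P_{\kappa,n}=\Phi c_{\kappa,n}$ therefore inherits strict concavity on each $\bbN_k(r)$ and the gap $|\mathcal P_{\kappa,n}|<1-q/2$ on $\bbF(r)$, which forces at most one spike per near region; at least one spike follows from weak-$\star$ convergence $\hat\mu_n\to\mu^0$. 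The stochastic input is a bound on $\|\Gamma_n\|_\bbL^2$ obtained not by Bernstein but by a degenerate $U$-statistic argument (Arcones--Gin\'e). Both approaches are valid; the dual route avoids parameterizing the solution manifold and checking Jacobian invertibility, trading this for the (simple) projection-stability estimate and the limit $c_\kappa\to c_0$.
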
 

\noindent
{Note that the constant $\mathcal{C}_{m}(\varphi)$ will depend on other quantities introduced later. It will be specified in Proposition~\ref{prop:up}. }

These three results deserve several comments.
First, $i)$ indicates that when a set $A$ has enough mass w.r.t. the estimated measure $\hat \mu_n$, it includes a true spike with an accuracy of the order $m^{-2}$. 
The second result $ii)$ provides some statistical guarantees on the mass set by $\hat{\mu}_n$ near a true spike $t_k$ that converges to $\mu^0(\{t_k\}) = a_k^0$.
\noindent
Condition~\eqref{eq:NDSCB} is inspired from the so-called ‘‘non-degenerated source condition'' (NDSC) introduced in~\cite{Duval_Peyre_JFOCM_15} and allows to derive the support stability. 
The last result $iii)$ shows that, for large enough sample size, $\ell_1$-regularization successfully recovers the number of mixing components. The estimated weights on the  Dirac masses then converge towards the true ones in amplitudes and localizations.

The bandwidth $m$ has to be adjusted to avoid over and under-fitting. Condition~\eqref{eq:bias} ensures that the target point is admissible for our convex program and it may be seen as a condition to avoid a large bias term and under-fitting. Condition~\eqref{eq:variance} ensures that the sample size is sufficiently large with respect to the model size $m$ and it might be seen as a condition to avoid over-fitting and therefore to upper-bound the variance of estimation.

Below, we will pay attention to the role of Fourier analysis of $\varphi$ and to the dimension $d$ of the ambient space.
These results are applied to specific settings {(super-smooth and ordinary-smooth mixtures)}. 

\subsection{Outline}
This paper is organized as follows. Section \ref{sec:definitions} introduces some standard ingredients of $\ell_1$ regularization methods and gives a deterministic analysis of the exact recovery property of~$\mu^0$ from $f^0$. Section \ref{s:blasso} provides a description of the statistical estimator $\hat{\mu}_n$ derived from a deconvolution with a Beurling-LASSO strategy (BLASSO) (see \textit{e.g.}~\cite{DeCastro_Gamboa_12}). {Tractable algorithms solving BLASSO when the observation is a sample from a mixing law are introduced in Section~\ref{sec:algos}}. Section \ref{sec:stat_mu0} focuses on the statistical performances of our estimator whereas Section \ref{s:rates} details the rates of convergence for specific mixture models. The main proofs are gathered in Section \ref{sec:proofs} whereas the most technical ones are deferred to the appendix.

\section{Assumptions, notation and first results}\label{sec:definitions}

This section gathers the main assumptions  on the  mixture model~\eqref{eq:target_prediction}. Preliminary theoretical results in an ``ideal'' setting are   stated in order to ease the understanding of the forthcoming paragraphs. 

\subsection{Functional framework}
We introduce some notation  used all along the paper.

\begin{definition}[Set $(\cM(\bR^d,\bR),\|\cdot\|_1)$]
 We denote
by $(\cM(\bR^d,\bR),\|\cdot\|_1)$ the space of \emph{real valued} measures on $\bR^d$ equipped with the \emph{total variation norm}~$\|\cdot\|_1$, which is defined as 
\[
 \| \mu \|_1 := \int_{\bR^d} \dd |\mu | \quad \forall \mu \in \cM(\bR^d,\bR)\,,
 \]
where $|\mu| = \mu^+ + \mu^-$ and $\mu= \mu^+ - \mu^-$ is the Jordan decomposition associated to a given measure $\mu \in \cM(\bR^d,\bR)$. 
\end{definition}

\noindent
A standard argument proves that the total variation of $\mu$ is also described with the help of a variational relationship:
$$
\|\mu\|_1 = \sup \left\{ \int_{\bR^d} f \dd \mu \,: f \, \text{is} \, \text{$\mu$-measurable and } \, |f|\leq 1 \right\}.
$$
Recall that $\varphi$ used in Equation~\eqref{eq:target_prediction}
is a probability density function so that $\varphi\in   L^1(\bR^d)$. 

\begin{definition}[Fourier transform over $L^1(\bR^d)$ and $\cM(\bR^d,\bR)$]
We denote by $\cF$ the Fourier transform defined by:
\[
\forall x\in\bR^d,\forall f\in L^1(\bR^d),\quad \cF[f](x):=\int_{\bR^d}e^{-\imath x^\top\omega}f(\omega)\dd \omega\,.
\]
A standard approximation argument extends the Fourier transform to $\cM(\bR^d,\bR)$ with:
\[
\forall x\in\bR^d,\forall \mu\in \cM(\bR^d,\bR),\quad \cF[\mu](x):=\int_{\bR^d}e^{-\imath x^\top\omega}\dd \mu(\omega)\,.
\]
\end{definition}
We denote by $\cC_0(\bR^d,\bR)$ the space of continuous real valued functions {\it vanishing at infinity} on~$\bR^d$ and recall that $\mathcal{F}\left(L^1(\bR^d)\right)$ is a dense subset of $\cC_0(\bR^d,\bR)$. We shall also introduce the convolution operator $\Phi$ as
\eq
\label{eq:def_Phi}
\mu\mapsto \Phi(\mu):= \varphi\star\mu=\int_{\bR^d}\varphi(\cdot-x)\mathrm \dd\mu(x)\,,\quad\mu\in\cM(\bR^d,\bR)\,,
\qe
and
it holds equivalently that
 (see \textit{e.g.}~\cite[Section 9.14]{rudin1991functional}):
\eq
\label{eq:Fourier_Phi}
\forall \mu\in \cM(\bR^d,\bR)\,,\quad \cF[ \Phi(\mu)]=\cF[\varphi]\cF[ \mu]\, .
\qe
Concerning the density $\varphi$ involved in (\ref{eq:target_prediction}), we will do the following assumption.
\eq 
\label{eq:H0}\text{The function} \ \varphi \ \text{is a {\it bounded continuous symmetric function of positive definite type}.} \tag{$\cH_0$} \qe

\noindent
In particular, the positive definite type property involved in Assumption (\ref{eq:H0}) is equivalent to require that for any finite set of points $\{x_1,\ldots,x_n\} \in \bR^d$ and for any $(z_1,\ldots,z_n) \in \mathds{C}^n$:
$$
\sum_{i=1}^n \sum_{j=1}^n \varphi(x_i-x_j) z_i \bar{z}_j \geq 0.
$$
In what follows, we consider $h: \bR^d \times \bR^d \longrightarrow \bR$  the function defined by $ h(x,y) = \varphi (x-y)$ for all $ x,y \in \bR^d$. In such a case, Assumption (\ref{eq:H0}) entails that $h(\cdot,\cdot)$ is a bounded continuous symmetric positive definite kernel. By Bochner's theorem (see, \textit{e.g.},~\cite[Theorem 11.32]{rudin1991functional}),~$\varphi$ is the inverse Fourier transform of a nonnegative measure~$\Sigma$ referred to as the \textit{spectral measure}. The Fourier inversion theorem states that $\Sigma$ has a nonnegative density $\sigma\geq0$ with respect to the Lebesgue measure on $\bR^d$ such that $\sigma\in L^1(\bR^d)$. Hence, it holds from the preceding discussion that  
\eq
\label{eq:Fourierphi}
\varphi=\cF^{-1}[\sigma]\text{ for some nonnegative }\sigma\in L^1(\bR^d)\,.
\qe 
Below, the set of points where the Fourier transform of a function does not vanish will play an important role. We will denote this support by $\mathrm{Supp}(\sigma)$:
$$
\mathrm{Supp}(\sigma) = \left\{ \omega \in \bR^d \, : \, \sigma(\omega) \neq 0\right\}.
$$
Some examples of densities $\varphi$ that satisfies (\ref{eq:H0}) will be given and discussed in the forthcoming sections. We emphasize that this assumption is not restrictive and concerns for instance Gaussian, Laplace or Cauchy distributions, this list being not exhaustive. \\

\noindent
\textbf{Additional notation.} Given two real sequences $(a_n)_{n\in \mathds{N}}$ and $(b_n)_{n\in\mathds{N}}$, we write $a_n \lesssim b_n$ (resp. $a_n \gtrsim b_n$) if there exists a constant $C>0$ independent of $n$ such that $a_n \leq b_n$ (resp. $a_n \geq b_n$) for all $n\in \mathds{N}$. Similarly, we write $a_n \ll b_n$ if $a_n/b_n \rightarrow 0$ as $n\rightarrow +\infty$. The set $\mathds{N}^*$ stands for $\mathds{N} \setminus \lbrace 0 \rbrace$.

\subsection{Exact Recovery of $\mu^0$ from $f^0$ - Case $
\mathrm{Supp}(\sigma) = \bR^d$}

In this paragraph, we are interested in an ``ideal'' problem 
where we are looking for $\mu^0$ not from a sample $X_1,\dots, X_n$ distributed according to Equation~\eqref{eq:target_prediction}, but from the population law $f^0$ itself. Of course, this situation does not occur in practice since in concrete situations, we do not observe $f^0$ but an empirical version of it and we will have to preliminary use an estimation of $f^0$ before solving the deconvolution inverse problem.  Nevertheless, this toy problem already provides the first ingredients for a better understanding of the difficulties that arise in the context we consider. 

We stress that $f^0:=\Phi(\mu^0)$ where $\Phi$ is defined by~\eqref{eq:def_Phi}. Hence, this paragraph concerns the recovery of $\mu^0$ from its convolution by the kernel $\varphi$. We thus face an inverse (deconvolution) problem. 
Several solutions could be provided and a standard method would rely on Fourier inversion 
\[\mu^0=  \cF^{-1} \left[ \cF(f^0) \sigma^{-1} \right]\,,\]
where $\sigma$ is given by~\eqref{eq:Fourierphi}.

Here, we prove in a first step that this deconvolution problem can be efficiently solved  using a $\ell_1$-regularization approach.  We will be interested in the convex program~\eqref{eq:program_P0} given by:
\eq
\label{eq:program_P0}
\min_{\mu\in\cM(\bR^d,\bR)\ \text{:}\ \Phi(\mu)=f^0} \|\mu\|_1\,.
\qe
In particular, we investigate under which conditions the solution set of~\eqref{eq:program_P0} is the singleton~$\{\mu^0\}$, that we referred to as the ``Perfect Recovery'' property. 
We introduce the set of admissible points to the program~\eqref{eq:program_P0}, denoted by $\cM(f^0)$ and defined as:
$$
\cM(f^0) := \{\mu\in\cM(\bR^d,\bR)\ \text{:}\ \Phi(\mu)=f^0\}.
$$
In this context, some different assumptions on the kernel $\varphi$ shall  be used in our forthcoming results.

A first reasonable situation is when the spectral density $\sigma = \cF(\varphi)$ has its support equal to~$\bR^d$ and in this case we denote $\sigma>0$. This requirement can be summarized in the next assumption on the function $\varphi$:
\eq
\label{eq:H0_intro}
\tag{$\cH_\infty$}
\varphi=\cF^{-1}[\sigma]\text{, } \sigma(\omega)=\sigma(-\omega)\text{ a.e. with }\mathrm{Supp}(\sigma)=\bR^d: \forall \omega \in \bR^d \quad \sigma(\omega)>0.
\qe

\begin{example}
It may be shown that the set of densities $\varphi$ that satisfy both Assumptions~\eqref{eq:H0} and~\eqref{eq:H0_intro} include the Gaussian, Laplace, $B_{2\ell+1}$-spline, inverse multi-quadrics, Mat\'ern class (see, e.g.,~\cite[top of page 2397]{sriperumbudur2011universality}) examples.
\end{example}

Under Assumptions~\eqref{eq:H0} and~\eqref{eq:H0_intro}, any target measure $\mu^0\in\cM(\bR^d,\bR)$ is the only admissible point of the program~\eqref{eq:program_P0}.

\begin{theorem}[Perfect Recovery under~\eqref{eq:H0} and~\eqref{eq:H0_intro}]\label{thm:perfect}
Assume that the convolution kernel satisfies~\eqref{eq:H0} and~\eqref{eq:H0_intro}, then for any target $\mu^0$  the program~\eqref{eq:program_P0} has $\mu^0$ as unique solution point:
$$\cM(f^0) = \{\mu^0\}.$$
\end{theorem}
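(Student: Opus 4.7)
My plan is to notice that the theorem's conclusion $\mathcal{M}(f^0)=\{\mu^0\}$ is actually stronger than uniqueness of the $\ell_1$-minimizer: it asserts that $\Phi$ is injective on $\mathcal{M}(\mathbb{R}^d,\mathbb{R})$ under the hypotheses. Hence the optimization aspect is vacuous and everything reduces to a Fourier-analytic injectivity argument for the convolution operator $\Phi$.

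Concretely, the plan is as follows. First I would note that $\mu^0$ is admissible by definition of $f^0=\Phi(\mu^0)$, so the feasibility set is nonempty. Then, for an arbitrary $\mu\in\mathcal{M}(f^0)$, set $\nu:=\mu-\mu^0\in\mathcal{M}(\mathbb{R}^d,\mathbb{R})$; by linearity,
\[
\Phi(\nu)=\Phi(\mu)-\Phi(\mu^0)=f^0-f^0=0.
\]
Since $\varphi\in L^1(\mathbb{R}^d)$ and $\nu$ is a finite signed measure, $\Phi(\nu)=\varphi\star\nu\in L^1(\mathbb{R}^d)$, and the convolution-Fourier identity~\eqref{eq:Fourier_Phi} applies, giving
\[
0=\mathcal{F}[\Phi(\nu)]=\mathcal{F}[\varphi]\,\mathcal{F}[\nu]=\sigma\cdot\mathcal{F}[\nu]
\]
as an equality in $L^1(\mathbb{R}^d)$ (hence a.e.).

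The key step is then to invoke assumption~\eqref{eq:H0_intro}: since $\sigma(\omega)>0$ for every $\omega\in\mathbb{R}^d$, the previous equality forces $\mathcal{F}[\nu]=0$ almost everywhere. Because $\nu$ is a finite signed measure, $\mathcal{F}[\nu]$ is a bounded \emph{continuous} function on $\mathbb{R}^d$, so $\mathcal{F}[\nu]\equiv 0$ pointwise. Uniqueness of the Fourier transform on $\mathcal{M}(\mathbb{R}^d,\mathbb{R})$ (a direct consequence of the density of $\mathcal{F}(L^1(\mathbb{R}^d))$ in $\mathcal{C}_0(\mathbb{R}^d,\mathbb{R})$, recalled just after the definition of $\mathcal{F}$ in the excerpt) then yields $\nu=0$, i.e., $\mu=\mu^0$. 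Hence $\mathcal{M}(f^0)=\{\mu^0\}$, and $\mu^0$ is trivially the unique minimizer of~\eqref{eq:program_P0}.

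The only point requiring a bit of care—and arguably the ``main obstacle'', though it is mild—is the passage from $\sigma\cdot\mathcal{F}[\nu]=0$ a.e.\ to $\mathcal{F}[\nu]\equiv 0$ everywhere, because $\sigma$ is defined only a.e.\ whereas $\mathcal{F}[\nu]$ is everywhere continuous; this is handled by the a.e.\ positivity of $\sigma$ combined with continuity of $\mathcal{F}[\nu]$. Assumption~\eqref{eq:H0} plays essentially no role here beyond ensuring the existence of $\sigma\ge 0$ via Bochner's theorem, so that formulating~\eqref{eq:H0_intro} as pointwise positivity of $\sigma$ is meaningful.
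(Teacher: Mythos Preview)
Your proof is correct, but it takes a genuinely different route from the paper's own argument. The paper invokes the RKHS framework: it notes that under~\eqref{eq:H0} and~\eqref{eq:H0_intro} the RKHS $\bbH$ generated by $\varphi(\cdot-\cdot)$ is dense in $\cC_0(\bR^d,\bR)$ and then appeals to a result of Sriperumbudur et al.\ asserting that~\eqref{eq:H0_intro} makes the mean embedding $\Phi:\cM(\bR^d,\bR)\to\bbH$ injective (i.e., $\varphi$ is a \emph{characteristic} kernel). Your argument bypasses this machinery entirely and works directly at the level of Fourier transforms: $\Phi(\nu)=0$ gives $\sigma\cdot\cF[\nu]=0$, strict positivity of $\sigma$ forces $\cF[\nu]=0$ a.e., continuity of $\cF[\nu]$ upgrades this to a pointwise identity, and Fourier uniqueness for finite measures concludes. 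Your proof is more elementary and self-contained, using only facts already recalled in the paper's preliminaries; the paper's version is terser but outsources the work to cited references and ties the result into the RKHS viewpoint used later. Both ultimately rest on the same Fourier-analytic fact, so the difference is one of packaging rather than substance.
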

We emphasize that the previous result also holds for measures 
$\mu^0$ that are not  necessarily discrete.
The proof is given in Appendix \ref{app-PerfRecov}.

\subsection{The Super-resolution phenomenon}
\label{s:super}

Theorem \ref{thm:perfect} entails that the measure $\mu^0$ can be recovered as soon as the spectrum of $f^0$ is observed and as soon as its support is $\bR^d$. Surprisingly, this latter assumption can be relaxed and reconstruction can be obtained in some specific situations. Such a phenomenon is associated to the super-resolution theory and has been popularized by~\cite{Candes_FernandezGranda_14} among others. 

Of course, this reconstruction is feasible at the expense of an assumption on the Fourier transform of $\varphi$. For the sake of simplicity, we assume that the spectral density $\sigma$ has a support that contains the hypercube $[-\eta,\eta]^d$ for some frequency threshold $\eta>0$: 
\eq
\label{eq:H1_intro}
\tag{$\cH_\eta$}
\varphi=\cF^{-1}[\sigma]\text{, } \sigma(\omega)=\sigma(-\omega)\text{ a.e. with } [-\eta,\eta]^d  \subset \mathrm{Supp}(\sigma).
\qe

\begin{remark}The densities $\varphi$ that satisfy~\eqref{eq:H1_intro} and for which $\mathrm{Supp}(\sigma)=[-\eta,\eta]^d$ act as ‘‘\,low pass filters''. The convolution operator $\Phi$ described in~\eqref{eq:def_Phi} cancels all frequencies above $\eta$, see for instance~\eqref{eq:Fourier_Phi}. Of course, the larger $\eta$, the easier the inverse deconvolution problem.
\end{remark}

\noindent
Under~\eqref{eq:H0} and~\eqref{eq:H1_intro}, the target measure $\mu^0\in\cM(\bR^d,\bR)$ is not the only admissible point in $\cM(f^0)$ to the program~\eqref{eq:program_P0}. We will need to ensure the existence of a specific function, called in what follows a \textit{dual certificate}, that will entail that $\mu^0$ is  still the only solution of the program~\eqref{eq:program_P0}. 

\begin{theorem}[Dual Certificate for~\eqref{eq:program_P0}]
\label{thm:Certificate_eta}
Assume that the density $\varphi$ satisfies~\eqref{eq:H0} and~\eqref{eq:H1_intro} for some $\eta>0$. Assume that $\mu^0$ and $S^0=\{t_1,\ldots,t_K\}$ are given by Equation~\eqref{eq:target} and that a function~$\mathcal{P}_\eta$ exists such that it satisfies the interpolation conditions:

\begin{itemize}
\item
$
\forall t \in\{t_1,\ldots,t_K\} \, : \mathcal{P}_\eta(t)=1 \quad \text{and} \quad  \forall t\notin\{t_1,\ldots,t_K\} \,:  |\mathcal{P}_\eta(t)|<1,
$ 
\end{itemize}
and the smoothness conditions:
\begin{itemize}
\item $\mathcal{P}_\eta\in\cC_0(\bR^d,\bR)\cap L^1(\bR^d)$,
\item  the support of the Fourier transform $\cF[\mathcal{P}_\eta]$ satisfies   $
\mathrm{Supp}\left(\cF[\mathcal{P}_\eta]\right) \subset [-\eta,\eta]^d$.
\end{itemize}
Then the program~\eqref{eq:program_P0} has $\mu^0$ as unique solution point $($Perfect Recovery$)$.
\end{theorem}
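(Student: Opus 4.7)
The plan is to run the classical dual-certificate argument adapted to the non-periodic setting: combine the interpolation property of $\mathcal{P}_\eta$ with a Fourier argument that converts the constraint $\Phi(\mu)=f^0$ into a vanishing Parseval-type integral, then use strict inequality off $S^0$ to extract uniqueness. Concretely, I would fix any admissible $\mu\in\cM(f^0)$, set $\nu:=\mu-\mu^0$, and note $\Phi(\nu)=0$. Invoking~\eqref{eq:Fourier_Phi} gives $\sigma\cdot\cF[\nu]=0$ almost everywhere. Because $\nu$ is a finite signed measure, $\cF[\nu]$ is continuous on $\bR^d$, and by~\eqref{eq:H1_intro} $\sigma$ is positive on $[-\eta,\eta]^d$; hence $\cF[\nu]\equiv 0$ on $[-\eta,\eta]^d$.

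Next I would show $\int \mathcal{P}_\eta\,\dd\nu=0$. Since $\mathcal{P}_\eta\in L^1(\bR^d)$ has a Fourier transform supported in $[-\eta,\eta]^d$, Fourier inversion and Fubini—justified by the compact support and boundedness of $\cF[\mathcal{P}_\eta]$ together with the finiteness of $|\nu|$—yield
\[
\int_{\bR^d}\mathcal{P}_\eta\,\dd\nu=(2\pi)^{-d}\int_{[-\eta,\eta]^d}\cF[\mathcal{P}_\eta](\omega)\,\cF[\nu](-\omega)\,\dd\omega=0,
\]
using the symmetry $-[-\eta,\eta]^d=[-\eta,\eta]^d$ and the vanishing established in the previous step.

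Combining this with the interpolation property $\mathcal{P}_\eta(t_k)=1$ and positivity of the $a_k^0$ gives $\int \mathcal{P}_\eta\,\dd\mu^0=\sum_k a_k^0=\|\mu^0\|_1$, so that
\[
\|\mu^0\|_1=\int \mathcal{P}_\eta\,\dd\mu^0+\int \mathcal{P}_\eta\,\dd\nu=\int \mathcal{P}_\eta\,\dd\mu\leq \int |\mathcal{P}_\eta|\,\dd|\mu|\leq \|\mu\|_1,
\]
the last inequalities following from the polar decomposition $\dd\mu=h\,\dd|\mu|$ with $|h|=1$ and from $|\mathcal{P}_\eta|\leq 1$. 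Hence $\mu^0$ minimizes the program~\eqref{eq:program_P0}. For uniqueness, assume equality throughout: the strict bound $|\mathcal{P}_\eta(t)|<1$ for $t\notin S^0$ combined with $\int|\mathcal{P}_\eta|\,\dd|\mu|=\|\mu\|_1$ forces $|\mu|$ to be concentrated on $S^0$, so $\mu=\sum_{k=1}^K b_k\delta_{t_k}$; the sign-matching equality $\mathcal{P}_\eta h=|\mathcal{P}_\eta|$ at the points $t_k$, where $\mathcal{P}_\eta(t_k)=+1$, then imposes $b_k\geq 0$. Finally, the relation $\Phi(\mu-\mu^0)=0$ reads in Fourier as $\sigma(\omega)\sum_k (b_k-a_k^0)e^{-\imath t_k^\top\omega}=0$ almost everywhere; positivity of $\sigma$ on the open cube $[-\eta,\eta]^d$ makes the trigonometric sum vanish there, hence everywhere by real-analyticity, and the linear independence of distinct exponentials (e.g.\ a Vandermonde argument on $d+1$ generic points) yields $b_k=a_k^0$.

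The main technical subtlety I anticipate is the Parseval step: both the $L^1$ assumption on $\mathcal{P}_\eta$ and the compact-support constraint on $\cF[\mathcal{P}_\eta]$ are essential to exchange integrals, and it is precisely this non-periodic formulation that replaces the Dirichlet-kernel/torus construction of~\cite{Candes_FernandezGranda_14}. The other careful point is the equality-case bookkeeping, which must separate the contribution of the strict inequality off $S^0$ (giving support localisation) from the sign condition on $S^0$ (giving positivity of the $b_k$); once these two are isolated, the remaining injectivity of the convolution with $\varphi$ restricted to finite atomic measures supported on $S^0$ follows directly from~\eqref{eq:H1_intro}.
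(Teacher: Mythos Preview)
Your proposal is correct and follows essentially the same route as the paper's proof: both use the Fourier constraint $\sigma\cdot\cF[\nu]=0$ together with $(\cH_\eta)$ to force $\cF[\nu]$ to vanish on $[-\eta,\eta]^d$, then pair with $\cF[\mathcal{P}_\eta]$ (supported there) to get $\int\mathcal{P}_\eta\,\dd\nu=0$, deduce that $\mu^0$ is a minimizer via the interpolation conditions and the bound $|\mathcal{P}_\eta|\leq 1$, and finish uniqueness by the strict inequality off $S^0$ plus a Vandermonde/linear-independence argument for the exponentials $e^{-\imath t_k^\top\omega}$. Your explicit use of real-analyticity to extend vanishing from the cube and your sign-matching step for $b_k\geq 0$ are minor embellishments (the latter is not strictly needed once linear independence is in hand), but the skeleton of the argument is identical to the paper's.
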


\noindent
The proof is given in Appendix \ref{s:proof_Certificate_eta}. 
A construction of such a certificate~$\mathcal{P}_\eta$ is presented in Appendix \ref{s:dual}
with some additional constraints.  In particular, it  will make it possible to address the more realistic statistical problem where only an empirical measure of the data is available. 

\begin{remark}
The previous theorem can be extended to the case where the convolution kernel is bounded, continuous and symmetric positive definite. The proof is the same substituting~$[-\eta,\eta]^d $ by the support~$\Omega$ of its spectral density. Remark that since $\sigma$ is nonzero, necessarily~$\Omega$ has a nonempty interior.
\end{remark}

\section{Off-The-Grid estimation via the Beurling-LASSO (BLASSO)}
\label{s:blasso}

In this section, we consider the statistical situation where the density $f^0$ is not available and we deal instead with a sample $\mathbf{X}=(X_1,\dots, X_n)$ of i.i.d. observations distributed with the density $f^0$. In this context, only the empirical measure 
\begin{equation}
\hat f_n:=\frac1n\sum_{i=1}^n \delta_{X_i},
\label{eq:empirical}
\end{equation}
is available, and our aim is to recover $\mu^0$ from $\hat f_n$. To this end, we use in this paper a {\it BLASSO} procedure (see \textit{e.g.}~\cite{Azais_DeCastro_Gamboa_15}). Namely we deal with the following estimator $\hat\mu_n$ of the unknown discrete measure $\mu^0$
defined as:
\begin{equation}
\hat \mu_n := \arg\min_{\mu \in \mathcal{M}(\bR^d,\bR)} \big\{C(\Phi\mu,\hat f_n) + \kappa \| \mu \|_1 \big\},
\label{def:mun}
\end{equation}
where $\kappa$ is a regularization parameter whose value will be made precise later on, and~$C(\Phi\mu,\hat f_n)$ is a \textit{data fidelity} term that depends on the sample $\mathbf{X}$.
The purpose of the data fidelity term is to measure the \textit{distance} between the target~$\mu^0$ and any candidate $\mu \in  \mathcal{M}(\bR^d,\bR)$. 

Some examples of possible cost functions  $C:\bbH \times \cM(\bR^d,\bR) \to\bR$ are discussed in  Section~\ref{sec:losses}. Our  goal is then to derive some theoretical results associated to this estimation procedure.

\subsection{Kernel approach}
\label{sec:losses}

\subsubsection{RKHS functional structure}
In order to design the data fidelity term,  we need to define a space where we can compare the observations $\mathbf{X}=(X_1,\dots, X_n)$ and any model $f=\varphi\star \mu=\Phi\mu$ for $\mu\in\cM(\bR^d,\bR)$. In this work, we focus our attention on a kernel approach. 

\paragraph{Reminders on RKHS}
The difficulty lies in the fact that the empirical law $\hat f_n$ introduced in~\eqref{eq:empirical} does not belong to~$\cC_0(\bR^d,\bR)$. To compare the prediction $\Phi\mu$ with $\hat f_n$, we need to embed these quantities in the same space. We consider here a Reproducing Kernel Hilbert Space (RKHS) structure, which provides  a lot of interesting properties and has been at the core of several investigations and applications in approximation theory~\cite{wahba}, as well as in the statistical and machine learning communities, (see~\cite{sriperumbudur2011universality} and the references therein). We briefly   recall the definition of such a space. 

\begin{definition}
Let $(\bbL,\|.\|_\bbL)$ be a Hilbert space containing function from $\bR^d$ to $\bR$. The space $\bbL$ is said to be a  RKHS if $\delta_x: f \mapsto f(x)$ are continuous for all $x\in \bR^d$ from $(\bbL,\|.\|_\bbL)$ to~$(\bR,|.|)$.
\end{definition}
The Riesz theorem leads to the existence of a function $\ell$ that satisfies the \textit{representation} property:
\begin{equation}
\langle f, \ell(x,.) \rangle_\bbL = f(x) \quad \forall f\in \bbL, \quad \forall x\in \bR^d.
\label{eq:repr}
\end{equation}
The function $\ell$ is called the \textit{reproducing kernel} associated to $\bbL$. 
Below, we consider a kernel~$\ell$ such that $\ell(x,y)=\lambda(x-y)$ for all $x,y \in \bR^d$ where $\lambda$ satisfies~\eqref{eq:H0}.
\noindent
Again, the Bochner theorem yields the existence of a \textit{nonnegative} measure $\Lambda\in\cM(\bR^d,\bR)$ such that $\lambda$ is its inverse Fourier transform 
\eq
\notag
\lambda = \cF^{-1}(\Lambda), \quad \text{namely} \quad \forall x\in\bR^d,\quad\lambda(x)=\int_{\bR^d}e^{\imath x^\top\omega}\dd \Lambda(\omega)\,.
\qe
Moreover, since $\lambda$ is continuous, $\Lambda$ is then a bounded measure and the
Mercer theorem (see \textit{e.g.}~\cite{berlinet}) proves that the
 RKHS $\bbL$ is exactly characterized  by
\begin{equation}
\bbL = \left\lbrace f:\bR^d \rightarrow \bR \ \mathrm{s.t.} \ \| f \|_\bbL^2 = \int_{\bR^d} \frac{ |\mathcal{F}[f](t)|^2}{\mathcal{F}[\lambda](t)} \dd t < +\infty \right\rbrace\,,
\label{eq:equaL}
\end{equation}
with dot product
\[
\forall f,g\in\bbL\,,\quad \langle f,g\rangle_{\bbL}=\int_{\bR^d} \frac{ \overline{\mathcal{F}[f]}(t)\mathcal{F}[g](t)}{\mathcal{F}[\lambda](t)} \dd t\,.
\]
\paragraph{Convolution in the RKHS}
The RKHS structure associated to the kernel $\lambda$ entails a comparison between the empirical measure and any candidate $\Phi \mu$. Indeed, 
a convolution operator $L$ similar to  the one defined in Equation (\ref{eq:def_Phi}) can be associated to the RKHS as pointed out by the next result.
\begin{prop}\label{prop:belongL}
For any $\nu \in \cM(\bR^d,\bR)$, the convolution $L\nu = \lambda\star \nu$ belongs to $\bbL$.
\end{prop}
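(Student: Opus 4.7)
The plan is to prove $L\nu\in\bbL$ by (i) verifying that $L\nu$ is a well-defined bounded continuous function, (ii) computing its Fourier transform via the convolution theorem, and (iii) plugging into the RKHS norm formula~\eqref{eq:equaL} and bounding the resulting integral by a finite quantity depending only on $\|\nu\|_1$ and the total mass of the spectral measure~$\Lambda$.

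First, since $\lambda$ satisfies~\eqref{eq:H0}, it is bounded and continuous on $\bR^d$. For any finite real measure $\nu\in\cM(\bR^d,\bR)$, the integral $(L\nu)(x)=\int_{\bR^d}\lambda(x-y)\,\dd\nu(y)$ is then absolutely convergent, with $\|L\nu\|_\infty\le\|\lambda\|_\infty\|\nu\|_1$, and standard dominated convergence shows that $L\nu$ is continuous. Next, the convolution theorem for the convolution of an integrable-type function with a bounded measure yields
\[
\cF[L\nu]=\cF[\lambda\star\nu]=\cF[\lambda]\,\cF[\nu],
\]
where, by Bochner's theorem together with the identification $\lambda=\cF^{-1}(\Lambda)$, the function $\cF[\lambda]$ is a nonnegative density of the bounded measure $\Lambda$ on $\bR^d$. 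Moreover $\cF[\nu]$ is the bounded continuous function $\omega\mapsto\int e^{-\imath\omega^\top y}\,\dd\nu(y)$, with the trivial bound $|\cF[\nu](t)|\le\|\nu\|_1$ for every $t\in\bR^d$.

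Now plug into~\eqref{eq:equaL}. On the set where $\cF[\lambda](t)>0$, the ratio $|\cF[L\nu](t)|^2/\cF[\lambda](t)$ simplifies to $|\cF[\nu](t)|^2\cF[\lambda](t)$; on $\{\cF[\lambda]=0\}$, the product $\cF[L\nu]=\cF[\lambda]\cF[\nu]$ also vanishes, so the integrand is interpreted as $0$ there. Hence
\[
\|L\nu\|_\bbL^2
=\int_{\bR^d}\frac{|\cF[L\nu](t)|^2}{\cF[\lambda](t)}\,\dd t
=\int_{\bR^d}|\cF[\nu](t)|^2\,\cF[\lambda](t)\,\dd t
\le \|\nu\|_1^2\int_{\bR^d}\cF[\lambda](t)\,\dd t=\|\nu\|_1^2\,\Lambda(\bR^d).
\]
Since $\Lambda$ is a bounded measure (equivalently, $\lambda(0)<\infty$ by Bochner), the right-hand side is finite, so $\|L\nu\|_\bbL<\infty$ and therefore $L\nu\in\bbL$, proving the proposition.

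The computation itself is routine; the only subtle point is the well-definedness of the RKHS norm expression when $\cF[\lambda]$ vanishes on a set of positive Lebesgue measure. The key observation that closes this gap is that the numerator $|\cF[L\nu]|^2=|\cF[\lambda]|^2|\cF[\nu]|^2$ vanishes at least as fast as $\cF[\lambda]$, so the quotient is unambiguously interpreted as $|\cF[\nu]|^2\cF[\lambda]$ everywhere. This is the main technical point to make precise; everything else is a direct application of the convolution theorem and Bochner's theorem already invoked in the excerpt.
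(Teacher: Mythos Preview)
Your proof is correct and in fact more direct than the paper's. The paper argues via a Mercer-type eigendecomposition: it introduces the integral operator $\mathcal{A}f(x)=\int\lambda(x-y)f(y)\,dy$, writes $\lambda(x-y)=\sum_k w_k\psi_k(x)\psi_k(y)$, expands $L\nu$ in the eigenbasis, and then bounds $\|L\nu\|_\bbL^2=\sum_k w_k\big(\int\psi_k\,d\nu\big)^2$ by Jensen's inequality, arriving at $\|L\nu\|_\bbL^2\le\lambda(0)\,\nu(\bR^d)$ for nonnegative $\nu$ (the signed case then follows by Jordan decomposition). You instead exploit the Fourier characterization~\eqref{eq:equaL} that the paper has already set up: the identity $\cF[L\nu]=\cF[\lambda]\,\cF[\nu]$ collapses the RKHS norm to $\int|\cF[\nu]|^2\cF[\lambda]$, which is bounded by $\|\nu\|_1^2\,\Lambda(\bR^d)$ directly. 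Your route avoids the spectral machinery and handles signed measures in one stroke; the paper's route gives the slightly sharper bound $\lambda(0)\|\nu\|_1$ (linear rather than quadratic in $\|\nu\|_1$), though either suffices for mere membership in~$\bbL$. One small caveat: since $\lambda$ need not lie in $L^1$ (e.g.\ the sinc kernel of Example~\ref{example}), the ``convolution theorem'' you invoke should be read through the identification $\lambda=\cF^{-1}[\cF[\lambda]]$ with $\cF[\lambda]\in L^1$, i.e.\ via the Fubini computation $L\nu(x)=\int e^{\imath x^\top t}\cF[\nu](t)\cF[\lambda](t)\,dt$, rather than by applying $\cF$ to $L\nu$ as an $L^1$ function.
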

\noindent 
The proof of Proposition~\ref{prop:belongL} is given in Appendix \ref{proof:belongL}. 

 \subsubsection{Data fidelity term}
 For any $\mu \in \cM(\bR^d,\bR)$, both $L\hat f_n$ and $L \circ \Phi\mu$ belong to~$\bbL$.  Hence, one may use the following data fidelity term
\eq
\label{eq:CL_norm}
\mathrm C_{\lambda}(\Phi\mu,\hat{f}_n):=\|L\hat f_n-L\circ\Phi\mu \|_{\bbL}^2, \quad  \forall  \mu \in \cM(\bR^d,\bR)\,.
\qe

\begin{example}\label{example}
An important example is given by the sinus-cardinal kernel $\mathrm{sinc}$. Given a frequency ``cut-off'' $1/\tauc>0$, one can consider the kernel 
\[
\lambda_{\tauc}(x):=\frac{1}{\tauc^d} \lambda_{\mathrm{sinc}}\left(\frac{x}{ \tauc}\right)
\quad \text{ where } \quad
\lambda_{\mathrm{sinc}}(x):=\prod_{j=1}^d\frac{\sin (\pi x_j)}{\pi x_j} \quad \forall x\in \bR^d.
\]
Then, the spectral measure is given by
\[
\dd\Lambda_{\tauc}(\omega)=\dd\Lambda_{\mathrm{sinc}}(\omega\pi \tauc):=\frac{1}{2^d}\prod_{j=1}^d\1_{[-1/\tauc,1/\tauc]}(\omega_j)\dd\omega\,, \quad \forall \omega \in \bR^d.
\]
In this particular case, we deduce that the convolution $L$ is a low-pass filter with a frequency cut-off $1/\tauc$ and the RKHS (denoted by $\bbL_\tauc$) is given by:
\begin{equation} 
\bbL_\tauc = \left\lbrace f 
\ \mathrm{s.t.} \ \| f \|_{\bbL_\tauc}^2 =\frac{1}{2^d} \int_{B_\infty(1/\tauc)} |\mathcal{F}[f]|^2 < +\infty\text{ and }\mathrm{Supp}(\mathcal{F}[f])\subseteq B_\infty(1/\tauc)\right\rbrace\,,
\label{eq:Lpassebas}
\end{equation}
where $B_\infty(1/\tauc)$ denotes the centered $\ell_\infty$ ball of radius $1/\tauc$. The RKHS $\bbL_\tauc$ then corresponds to the band-limited functions in $L^2(\bR^d)$ whose Fourier transform vanishes for a frequency larger than $1/\tauc$. In this context, our criterion becomes 
\eq
\notag
C_{\lambda_{\tauc}}(\Phi \mu, \hat{f}_n)=\frac1{2^{d}}\int_{[-1/\tauc,1/\tauc]^d}|\cF[\Phi \mu -\hat f_n](\omega)|^2\dd\omega
= \frac1{2^{d}}\int_{[-1/\tauc,1/\tauc]^d}| \sigma \cF[ \mu] -\cF[\hat f_n](\omega)|^2\dd\omega\,,
\qe
and it may be checked that
\eq 
\notag
C_{\lambda_{\tauc}}(\Phi \mu, \hat{f}_n)  
 = \frac{1}{2^d} \int_{\bR^d} \left| \lambda_{\tauc} \star (\Phi \mu-\hat f_n)(x) \right|^2 \dd x.
\qe
This loss focuses on the $L^2$-error of $\Phi\mu-\hat f_n$ for frequencies in the Fourier domain~$[-1/\tauc,1/\tauc]^d$. In some sense, the kernel estimator  $\lambda_\tauc \star \hat f_n$ has a bandwidth $\tauc$ that will prevent from over-fitting. 
\end{example}

We stress that, as it is the case in the previous low-pass filter example,  $C_{\lambda_{\tauc}}(\Phi \mu, \hat{f}_n)$ may depend on a tuning parameter (the bandwidth $\tauc$ in Example \ref{example}). For the ease of presentation, this parameter is not taken into account in the notation. However, its value will be discussed in Section \ref{s:rates}.

\subsubsection{Data-dependent computation}

The next proposition entails that the criterion $C_\lambda$ introduced in Equation~\eqref{eq:CL_norm} can be used in practice giving a useful expression to compute it.
\begin{prop}
\label{prop:critere}
For all $\mu \in \cM(\bR^d,\bR)$, we have:
\begin{eqnarray*}
\lefteqn{C_{\lambda }(\Phi \mu, \hat{f}_n)  = \|L\hat f_n-L\circ\Phi\mu\|_{\bbL}^2}  \\
&= &\|L\hat f_n\|_{\bbL}^2 + \int_{\bR^d}\big[-\frac2n\sum_{i=1}^n\lambda(t-X_i)\big](\Phi\mu)(t)\dd t+\int_{\bR^d\times\bR^d}\lambda(x-y)(\Phi\mu)(x)(\Phi\mu)(y)\dd x \dd y  
\,.
\end{eqnarray*}
\end{prop}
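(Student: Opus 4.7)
The plan is to expand the squared norm, then compute each of the three resulting terms using the reproducing property of the RKHS together with the symmetry of $\lambda$ established in~\eqref{eq:H0}. Writing
\begin{equation*}
\|L\hat f_n-L\circ\Phi\mu\|_{\bbL}^2=\|L\hat f_n\|_{\bbL}^2-2\langle L\hat f_n,L\circ\Phi\mu\rangle_{\bbL}+\|L\circ\Phi\mu\|_{\bbL}^2,
\end{equation*}
it then suffices to identify the last two terms with the integral expressions appearing in the statement.

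First, since $\hat f_n=\frac{1}{n}\sum_{i=1}^n\delta_{X_i}$, the convolution gives $L\hat f_n=\frac{1}{n}\sum_{i=1}^n\lambda(\cdot-X_i)=\frac{1}{n}\sum_{i=1}^n\ell(X_i,\cdot)$ where $\ell(x,y)=\lambda(x-y)$. By linearity and the reproducing property~\eqref{eq:repr}, I would deduce
\begin{equation*}
\langle L\hat f_n,L\circ\Phi\mu\rangle_{\bbL}=\frac{1}{n}\sum_{i=1}^n\langle \ell(X_i,\cdot),L\circ\Phi\mu\rangle_{\bbL}=\frac{1}{n}\sum_{i=1}^n(L\circ\Phi\mu)(X_i).
\end{equation*}
Expanding $(L\circ\Phi\mu)(X_i)=\int_{\bR^d}\lambda(X_i-t)(\Phi\mu)(t)\dd t$ and using the symmetry $\lambda(x)=\lambda(-x)$ inherited from~\eqref{eq:H0}, this gives exactly the middle term in the statement after multiplying by $-2$.

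For the quadratic term $\|L\circ\Phi\mu\|_{\bbL}^2$, by Proposition~\ref{prop:belongL} we know $L\circ\Phi\mu\in\bbL$; moreover $L\circ\Phi\mu=\int_{\bR^d}\ell(y,\cdot)(\Phi\mu)(y)\dd y$ in the Bochner sense, since $\Phi\mu\in L^1(\bR^d)\cap L^\infty(\bR^d)$ and $y\mapsto\ell(y,\cdot)$ is bounded in $\bbL$ (its norm is $\sqrt{\lambda(0)}$, independent of $y$). Taking the inner product with $L\circ\Phi\mu\in\bbL$ and commuting it with the Bochner integral yields, by the reproducing property again,
\begin{equation*}
\|L\circ\Phi\mu\|_{\bbL}^2=\int_{\bR^d}\langle\ell(y,\cdot),L\circ\Phi\mu\rangle_{\bbL}(\Phi\mu)(y)\dd y=\int_{\bR^d}(L\circ\Phi\mu)(y)(\Phi\mu)(y)\dd y,
\end{equation*}
which, after expanding $(L\circ\Phi\mu)(y)=\int_{\bR^d}\lambda(y-x)(\Phi\mu)(x)\dd x$ and applying Fubini (legitimate since $\lambda$ is bounded and $\Phi\mu\in L^1$), matches the double integral in the statement. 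Summing the three contributions gives the announced formula.

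The only subtle point is the interchange of the inner product with the integral representation of $L\circ\Phi\mu$; this is the step I expect to require the most care. It can be justified either by approximating $\Phi\mu$ by simple functions and passing to the limit (using continuity of the inner product and dominated convergence), or more directly by invoking standard Bochner-integral properties together with the uniform bound $\|\ell(y,\cdot)\|_{\bbL}=\sqrt{\lambda(0)}$ that follows from~\eqref{eq:H0}. All other steps are routine manipulations of the reproducing property and Fubini's theorem.
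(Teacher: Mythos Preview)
Your proposal is correct and follows essentially the same approach as the paper: expand the squared norm and evaluate the cross and quadratic terms via the reproducing property of $\bbL$, with Fubini justified by boundedness of $\lambda$ and $\Phi\mu\in L^1$. The only cosmetic difference is that for the cross term the paper writes $L\circ\Phi\mu=\int_{\bR^d}\ell(\cdot,t)(\Phi\mu)(t)\dd t$ and applies the reproducing property to $L\hat f_n$, whereas you write $L\hat f_n=\frac1n\sum_i\ell(X_i,\cdot)$ and apply it to $L\circ\Phi\mu$; these are symmetric and equivalent, and your explicit discussion of the Bochner-integral interchange is, if anything, more careful than the paper's treatment.
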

\noindent
We stress that  $\|L\hat f_n \|_{\bbL}^2$ does no depend on $\mu$ and can be removed from the criterion when it is used in the  program~\eqref{eq:program_P0}. The proof of Proposition \ref{prop:critere} is given in  Appendix \ref{proof:critere}. 

\subsection{Estimation by convex programming}

Our estimator is defined as a solution of the following optimization program with the data-fidelity term $C_{\lambda }(\Phi \mu, \hat{f}_n) $ introduced in (\ref{eq:CL_norm}). Hence, we  consider the optimization problem:
\eq
\inf_{\mu \in \mathcal{M}(\bR^d,\bR)}
\left\lbrace \frac12\| L\hat f_n - L\circ\Phi \mu \|_{\bbL}^2 + \kappa \| \mu \|_1 \right\rbrace, 
 \,,
\label{eq:blasso}
\tag{$\mathbf{P}_\kappa$}
\qe
where $\|\cdot \|_\bbL$ is the norm associated to the RKHS generated by $\lambda$ (see Section \ref{sec:losses}) and $\kappa>0$ is a tuning parameter whose value will be made precise later on. We emphasize that ($\mathbf{P}_\kappa$) is a convex programming optimization problem (convex function to be minimized on a convex constrained set).
The estimator~$\hat \mu_n$ is then any solution of
 \eq
\hat \mu_n \in  \arg\min_{\mu \in \mathcal{M}(\bR^d,\bR)} 
\left\lbrace \frac12\| L\hat f_n - L\circ\Phi \mu \|_{\bbL}^2 + \kappa \| \mu \|_1 \right\}\,.
\label{eq:muhat}
\qe
Algorithmic issues related to the computation of~\eqref{eq:muhat} are sketched in Section~\ref{sec:algos} and discussed in depth in Appendix \ref{app:algos}. 

\begin{remark}
The tuning parameter $\kappa>0$ needs to be chosen carefully. First note that it may depend on the choice of the frequency cut-off $1/\tauc$ in $\lambda_{\tauc}$, which is  the bandwidth feature map (see Remark~\ref{example} for a definition). Our analysis shows that $\tau = 1/{4m}$ as in~\eqref{e:4m}, and $m$ is a standard nonparametric bandwidth in mixture models for which rates are given in Section~\ref{s:rates}. The main message being that it depends only on the regularity of $\varphi$ and on the sample size $n$ for $n$ large enough. 
From a practical view point, it is not excluded to use a Cross-Validation scheme as it heuristically performs well for $L^1$-based methods such as LASSO. In this case, the score function can be chosen to be the data fidelity term $\| L\hat f_n - L\circ\Phi \mu_{\mathrm{cv}} \|_{\bbL}^2$ evaluated on the validation set.
From a theoretical view point, one may choose $\kappa$ as in Equation~\eqref{e:kappa}. Then, Equation~\eqref{eq:fourre_tout} results in 
\[
\kappa\geq\frac{\rho_n}{\mathcal{C}_m(\varphi,\lambda)}\,,
\]
and these quantities depend only on the regularity of $\varphi$ and the sample size $n$ for $n$ large enough, as studied in Section~\ref{s:rates}.
\end{remark}

Super-resolution is the ability to recover a discrete measure on the torus from some Fourier coefficients (recall that the Pontryagin's dual of the torus is $\bbZ^d$) while we want to recover a discrete measure on $\bR^d$ from some Fourier transform over $\bR^d$ (recall that the Pontryagin's dual of $\bR^d$ is $\bR^d$). In particular the dual of~\eqref{eq:blasso} does not involve a set of fixed degree trigonometric polynomials as in super-resolution but inverse Fourier transform of some tempered distribution.


Hence, new theoretical guarantees are necessary in order to properly define the estimator~$\hat \mu_n$. This is the aim of the next theorem.
 In this view, we consider primal variables $\mu\in\cM(\bR^d,\bR)$ and $z\in\bbL$ and introduce the dual variable $c\in\bbL$ as well as the following Lagrangian:
\eq
\label{eq:lagrangian}
\cL(\mu,z,c):=
\frac12\| L\hat f_n - z \|_{\bbL}^2 + \kappa \| \mu \|_1
-\langle c,L\circ\Phi \mu-z\rangle_{\bbL}
\,.
\qe

It is immediate to check that if $z \neq L \circ \Phi\mu$, then the supremum of $\cL(\mu,z,c)$ over $c$  is $+ \infty$. Therefore, the primal expression coincides with  the supremum in the dual variables, namely
\[
\inf_{\mu,z}\sup_{c}\cL(\mu,z,c)=
\inf_{\mu \in \mathcal{M}(\bR^d,\bR)} 
\left\lbrace \frac12\| L\hat f_n - L\circ\Phi \mu \|_{\bbL}^2 + \kappa \| \mu \|_1 \right\}\, \Longleftrightarrow (\mathbf{P}_\kappa) .
\]  
In the meantime, the dual program of~\eqref{eq:blasso} is  given by
  \eq
  \sup_{c \in \bbL} \inf_{(\mu,z) \in \mathcal{M}(\bR^d,\bR) \times \bbL} \cL(\mu,z,c)\, .
  \label{eq:blasso_dual}
  \tag{$\mathbf{P}^*_\kappa$}
  \qe 
 
 
\begin{theorem}[Primal-Dual programs, strong duality]
\label{thm:sol_discrete}
The following statements are true.
\begin{itemize}
\item[$i)$] The primal problem~\eqref{eq:blasso} has at least one solution and it holds that 
\[
\hat z_n:=L\circ\Phi\hat\mu_n\quad \text{ and }\quad\hat m_n:=\|\hat \mu_n\|_1 \quad \text{are uniquely defined,}
\] 
hence, they do not depend on the choice of the solution~$\hat\mu_n$.
\item[$ii)$] The dual program of~\eqref{eq:blasso},  given by~\eqref{eq:blasso_dual}  satisfies
\[
\frac{\| L\hat f_n\|_{\bbL}^2}2-\inf
\Big\{\frac12\| L\hat f_n-c\|_{\bbL}^2\,:\ c\in\bbL\text{ s.t. }\|\Phi c\|_\infty\leq \kappa\Big\}\, \Longleftrightarrow (\mathbf{P}^*_\kappa),
\]
and there is {\it no duality gap} $($strong duality holds$)$. Furthermore, it has a unique solution  
\[
\hat c_n=L\hat f_n-\hat z_n\,.
\]
\item[$iii)$] Any solution $\hat \mu_n$ to~\eqref{eq:blasso} satisfies 
\[
\mathrm{Supp}(\hat \mu_n)\subseteq\Big\{x\in\bR^d\ :\ |\hat\eta_n|(x)=1\Big\}\quad\text{and}\quad
\int_{\bR^d}{\hat\eta_n}\,\dd \hat\mu_n= \hat m_n\,,
\]
where 
\eq
\label{e:dual_poly}
\hat\eta_n:=\frac{\Phi \hat c_n}\kappa=\frac1\kappa\Phi(L\hat f_n-z_n)\,,
\qe
{i.e. it is a sub-gradient of the total variation norm at point }$\hat\mu_n$.

\item[$iv)$] If $d=1$ and if at least one of the spectral measures $\Lambda$   or $\sigma$   has a bounded support, then  $\{x\in\bR\,:\,| \hat{\eta}_n|(x)=1\}$ is discrete with no accumulation point, any primal solution~$\hat\mu_n$ has an $($at most countable$)$ discrete support $\hat S\subset\bR$ with no accumulation point:
\eq
\label{eq:discrete}
\hat \mu_n=\sum_{t\in\hat S}\hat a_t\delta_t
\,.
\qe
\end{itemize} 
\end{theorem}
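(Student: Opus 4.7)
For $i)$, I would use the direct method of the calculus of variations. The functional $J(\mu):=\tfrac12\|L\hat f_n-L\circ\Phi\mu\|_{\bbL}^2+\kappa\|\mu\|_1$ is coercive in total variation thanks to the $\kappa\|\mu\|_1$ term, so every minimizing sequence is bounded in $\cM(\bR^d,\bR)$. Identifying $\cM(\bR^d,\bR)$ with the topological dual of $\cC_0(\bR^d,\bR)$, Banach--Alaoglu yields a weak-$*$ cluster point $\hat\mu_n$. The linear map $\mu\mapsto L\circ\Phi\mu$ is weak-$*$ continuous into $\bbL$ (one tests against the bounded continuous functions $x\mapsto L\lambda(\cdot-x)$ via the reproducing property), and $\|\cdot\|_1$ is weak-$*$ lower semi-continuous, so $\hat\mu_n$ realises the infimum. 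Uniqueness of $\hat z_n=L\circ\Phi\hat\mu_n$ follows from strict convexity of $z\mapsto\tfrac12\|L\hat f_n-z\|_{\bbL}^2$: averaging two minimizers remains a minimizer, which forces equality of the data-fit images; equality of $\|\hat\mu_n\|_1$ then follows from equality of the total values of $J$.

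For $ii)$, I would carry out Fenchel--Rockafellar duality directly on the Lagrangian~\eqref{eq:lagrangian}. Minimizing over $z\in\bbL$ gives the optimizer $z=L\hat f_n-c$ with reduced value $\langle c,L\hat f_n\rangle_{\bbL}-\tfrac12\|c\|_{\bbL}^2$. The reproducing property combined with Fubini yields the key identity $\langle c,L\circ\Phi\mu\rangle_{\bbL}=\int\Phi c\,\dd\mu$, valid for $c\in\bbL$ and $\mu\in\cM(\bR^d,\bR)$ because $c(x)=\langle c,\lambda(x-\cdot)\rangle_{\bbL}$ is bounded by $\|c\|_{\bbL}\sqrt{\lambda(0)}$ and $\varphi\in L^1$. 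By the variational formula for the total variation norm, the infimum in $\mu$ of $\kappa\|\mu\|_1-\int\Phi c\,\dd\mu$ equals $0$ when $\|\Phi c\|_\infty\leq\kappa$ and $-\infty$ otherwise, which rearranges into the announced dual expression. Strong duality follows from Slater's qualification (the primal is unconstrained and finite-valued at $\mu=0$), and uniqueness of $\hat c_n$ is a consequence of strict convexity of the projection $c\mapsto\tfrac12\|L\hat f_n-c\|_{\bbL}^2$ onto the closed convex feasible set; the identity $\hat c_n=L\hat f_n-\hat z_n$ is read off the stationarity in $z$.

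For $iii)$, the KKT condition associated with the vanishing duality gap reads $\int\hat\eta_n\,\dd\hat\mu_n=\|\hat\mu_n\|_1$ with $\hat\eta_n:=\Phi\hat c_n/\kappa$, while dual feasibility yields $\|\hat\eta_n\|_\infty\leq 1$. Decomposing $\hat\mu_n$ into its Jordan parts, both inequalities must become equalities $|\hat\mu_n|$-a.e., which forces $\hat\eta_n=\pm1$ on the support of $\hat\mu_n$; the continuity of $\hat\eta_n$, inherited from $\Phi\hat c_n\in\cC_0(\bR^d,\bR)$, extends the identity to the closed support.

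Finally, for $iv)$, when $d=1$ and either $\Lambda$ or $\sigma$ has bounded support, $\cF[\Phi\hat c_n]=\sigma\,\cF[\hat c_n]$ is compactly supported, since $\hat c_n\in\bbL$ constrains $\mathrm{Supp}(\cF[\hat c_n])\subseteq\mathrm{Supp}(\Lambda)$. By Paley--Wiener, $\hat\eta_n$ extends to an entire function of exponential type and is therefore real-analytic on $\bR$; as $\hat\eta_n\in\cC_0(\bR,\bR)$ it cannot be identically $\pm1$, so $\{|\hat\eta_n|=1\}$ is a set of isolated points without accumulation, and combined with $iii)$ this yields~\eqref{eq:discrete}. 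The step I expect to be the main obstacle is the clean justification of strong duality in this non-reflexive infinite-dimensional setting, since $\cM(\bR^d,\bR)$ is not reflexive and $\|\cdot\|_1$ is non-smooth; the most robust route is to set up the dual as a strictly convex, coercive projection problem on $\bbL$, establish attainment there, and only afterwards recover the primal-dual optimality relations through the explicit KKT formulas above.
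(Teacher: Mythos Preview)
Your proposal is correct and follows essentially the same route as the paper: existence via lower semi-continuity for the weak-$*$ topology, the averaging/parallelogram argument for uniqueness of $\hat z_n$ and $\hat m_n$, explicit Lagrangian computation using the reproducing-kernel/Fubini identity $\langle c,L\circ\Phi\mu\rangle_{\bbL}=\int\Phi c\,\dd\mu$, Slater for strong duality, and Paley--Wiener for $iv)$. The only visible difference is the Slater verification: the paper exhibits an interior point of the dual feasible set $\{\|\Phi c\|_\infty\leq\kappa\}$ via $\|\Phi c\|_\infty\leq\|c\|_\infty$, whereas you invoke the primal-side qualification (the data-fit term is continuous at $L\circ\Phi(0)=0$), which is equally valid and arguably cleaner.
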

\noindent
The proof of this result can be found in Appendix \ref{proof:sol_discrete}. 

\noindent
It is generally numerically admitted, see for instance~\cite[Page 939]{Candes_FernandezGranda_14}, that the extrema of the dual polynomial $\hat{\eta}_n=\Phi \hat c_n$ are located in a discrete set, so that  any solution to~\eqref{eq:blasso} has a discrete support by using $iii)$. However, this issue remains an open question. In practice, all solvers of~\eqref{eq:blasso} lead to discrete solutions: greedy methods are discrete by construction, and $L^1$-regularization methods empirically lead to discrete solutions, see \textit{e.g.}~\cite{Candes_FernandezGranda_14}.
Furthermore, as presented in Theorem~\ref{thm:yoyo}, our theoretical result shows that for large enough $n$ and under the so-called~\eqref{eq:NDSCB} condition, the support stability property holds. In this case, the solution of~\eqref{eq:blasso} is discrete with $\hat K=K$ atoms.

\begin{example} 
Observe that the low-pass filter defined in Example~\ref{example} satisfies the requirements of  $iv)$ in Theorem~\ref{thm:sol_discrete}:  we deduce that when  $d=1$, all solutions $\hat \mu_n$ are of the form~\eqref{eq:discrete}.
\end{example}

\subsection{Tractable Algorithms for BLASSO Mixture Models}
\label{sec:algos}

Available algorithms for solving~\eqref{eq:muhat}  with ``{\it \!off-the-grid\,}'' methodology can be roughly divided into two categories: greedy methods and Riemannian descent methods.
We emphasize that if the BLASSO has been studied in the past decade, the formulation~\eqref{eq:muhat} has two new important features. First the observation is a sample from a mixing law. Second, the data fidelity term has been tuned to incorporate a low pass filter kernel $\lambda$. 
For both methods,  we refer to Appendix \ref{app:algos}
for further details and references.

\begin{algorithm}[h]
    \caption{Sliding Frank Wolfe~Algorithm (SFW) for BLASSO Mixture Models}
    \label{sec:sfw-alg:sfw}
    \begin{algorithmic}[1]
    \State Initialize with $\hat\mu^{(0)}=0$
    \While{the stopping criterion is not met}
    \State\label{computeNextPos}
    $\hat\mu^{(k)}=\displaystyle\sum_{i=1}^{N^{(k)}}a_i^{(k)}\delta_{t_i^{(k)}}$, $a_i^{(k)}\in\mathds R$, $t_i^{(k)}\in\mathds R^d$ where $N^{(k)} = | Supp(\hat{\mu}^{(k)})|$ and find $t_\star^{(k)}$ such that 
    \eq
   \notag
    	t_\star^{(k)}\in\arg\max_{t\in\mathds R^d}\big|\eta^{(k)}(t)\big|\quad\text{where}\quad \eta^{(k)}=-\frac{\nabla\mathrm{F}(\hat\mu^{(k)})}\kappa
	\qe
    \If{$|\eta^{(k)}(t_\star^{(k)})|\leq 1$}{}\State {$\hat\mu^{(k)}$ is an {\it exact} solution {\bf Stop}}
        \Else{}
    \State
    Find $\hat\mu^{(k+\frac12)}=\displaystyle\sum_{i=1}^{N^{(k)}}a_i^{(k+\frac12)}\delta_{t_i^{(k)}}+a_i^{(k+\frac12)}\delta_{t_\star^{(k)}}$ such that 
    \eq
    \label{e:lasso_step}
    \tag{LASSO Step}
    a^{(k+\frac12)}\in\arg\min_{a\in\mathds R^{N^{(k)}+1}}F_{N^{(k)}+1}(a,t^{(k+\frac12)})+\kappa\|a\|_1
    \qe
    where $t^{(k+\frac12)}:=(t_1^{(k)},\ldots,t_{N^{(k)}}^{(k)},t_\star^{(k)})$ is kept fixed.
    \State\label{computeNextAmp} 
    Obtain $\hat\mu^{(k+1)}=\displaystyle\sum_{i=1}^{N^{(k)}+1}a_i^{(k+1)}\delta_{t_i^{(k+1)}}$ such that
       \eq
       \label{e:non_conv}
    (a^{(k+1)},t^{(k+1)})\in\arg\min_{(a,t)\in\mathds R^{N^{(k)}+1}\times(\mathds R^d)^{N^{(k)}+1}}F_{N^{(k)}+1}(a,t)+\kappa\|a\|_1
    \qe
\qquad \quad    using a non-convex solver initialized with $(a^{(k+\frac12)},t^{(k+\frac12)})$.
    \State Eventually remove zero amplitudes Dirac masses from $\hat\mu^{(k+1)}$.
    \EndIf
    \EndWhile
    \end{algorithmic}
\end{algorithm}

\paragraph{Greedy method: the Sliding Frank-Wolfe algorithm (SFW)}

The Frank-Wolfe algorithm is an interesting avenue for solving {\it differentiable convex} programs on {\it weakly compact convex} sets, see~\cite{denoyelle2019sliding} and references therein for further details, which can be adapted to compute approximate solutions of the BLASSO Mixture Models~\eqref{eq:muhat} with a supplementary \textit{sliding} step. 
For a measure $\mu_{a,t}$ that may be decomposed into a finite sum of Dirac masses, we define~$F_N$ the data-fitting term:
 \begin{equation}\label{def:FN}
\mu_{a,t}:=\displaystyle\sum_{i=1}^{N}a_i\delta_{t_i}\quad \text{and}\quad \mathrm F_N(a,t):={\mathrm F}(\mu_{a,t})
=\frac12\| L\hat f_n - \sum_{i=1}^{N}a_iL\circ\Phi\delta_{t_i} \|_{\bbL}^2\,.
\end{equation}

\noindent
The SFW method  is then described in Algorithm  \ref{sec:sfw-alg:sfw}. It is a greedy method that recursively builds 
\[
\eta_\mu:=-\frac{\nabla\mathrm{F}(\mu)}{\kappa}=\frac1{\kappa}{\Phi (L\hat f_n-L\circ\Phi\mu)}\,,
\]
see Line 3 of Algorithm  \ref{sec:sfw-alg:sfw}.

\paragraph{ Conic Particle Gradient Descent (CPGD)}

Conic Particle Gradient Descent~\cite{chizat2019sparse} is an alternative promising avenue for solving BLASSO for Mixture Models~\eqref{eq:muhat}. The idea is still to discretize a positive measure into a system of particles, \textit{i.e.} a sum of $N$ Dirac masses following~\eqref{def:FN}
with $a_i=r_i^2$ and use a mean-field approximation in the Wasserstein space jointly associated with a Riemannian gradient descent with the conic metric. We refer to~\cite{chizat2019sparse} and the references therein for further details.
This method may be shown to be rapid, with a $\log(\epsilon^{-1})$ cost instead of $\epsilon^{-1/2}$ for standard convex programs. Adapted to the BLASSO for Mixture Models,  we derive in Algorithm \ref{sec:cpgd} a version of the Conic Particle Gradient Descent of~\cite[Algorithm 1]{chizat2019sparse} and we implemented this algorithm for Mixture Models in Figure~\ref{fig:cpgd}. 

More precisely, Figure~\ref{fig:cpgd} is a {\it proof-of-concept} and CPGD for Mixture Models would be investigated in future work. One may see that this method uncovers the right number of targets Dirac masses and their locations as some particules cluster around three poles. Some of particules vanishes and do not detect the support. Notice that a {\it soft-thresholding} effect tends to zero the small amplitudes as it may standardly be shown in $L^1$ regularization.

\begin{algorithm}[h]
    \caption{Conic Particle Gradient Descent~Algorithm for BLASSO Mixture Models}
    \label{sec:cpgd}
    \begin{algorithmic}[1]
    \State Choose two gradient step sizes $\alpha,\beta>0$ and the number of Particles $N\geq1$.
    \State Define $N$ Particles weights-locations $(r_i^{(0)},t_i^{(0)})_{i=1}^N$ representing the {\it initial measure}
    \[
    \hat\mu^{(0)}:=\frac1N\sum_{i=1}^N a_i^{(0)} \delta_{t_i^{(0)}}\,,
    \]
    where $a_i^{(0)}:=(r_i^{(0)})^2$. 
    \While{stopping criterion is not met}
    \State
    For all $i=1,\ldots,N$ update ({\it mirror descent step for $r$} associated to the KL divergence over $\mathds{R}_+^d$)
    \begin{align*}
    r_i^{(k+1)} &= r_i^{(k)}\exp\Big(2\,\alpha\,\kappa\,\big(\eta^{(k)}(t_i^{(k)})-1\big)\Big)\\
    t_i^{(k+1)} & = t_i^{(k)}  +\beta\,\kappa\,\nabla\eta^{(k)}(t_i^{(k)})
    \end{align*}
\qquad     where $\eta^{(k)}=-\frac{\nabla\mathrm{F}(\hat\mu^{(k)})}\kappa$, $\displaystyle\hat\mu^{(k)}:=\frac1N\sum_{i=1}^N a_i^{(k)} \delta_{t_i^{(k)}}${ and }$a_i^{(k)}=(r_i^{(k)})^2$.       \EndWhile
    \end{algorithmic}
\end{algorithm}

\begin{figure}[h]
\begin{center}
\includegraphics[height=6cm]{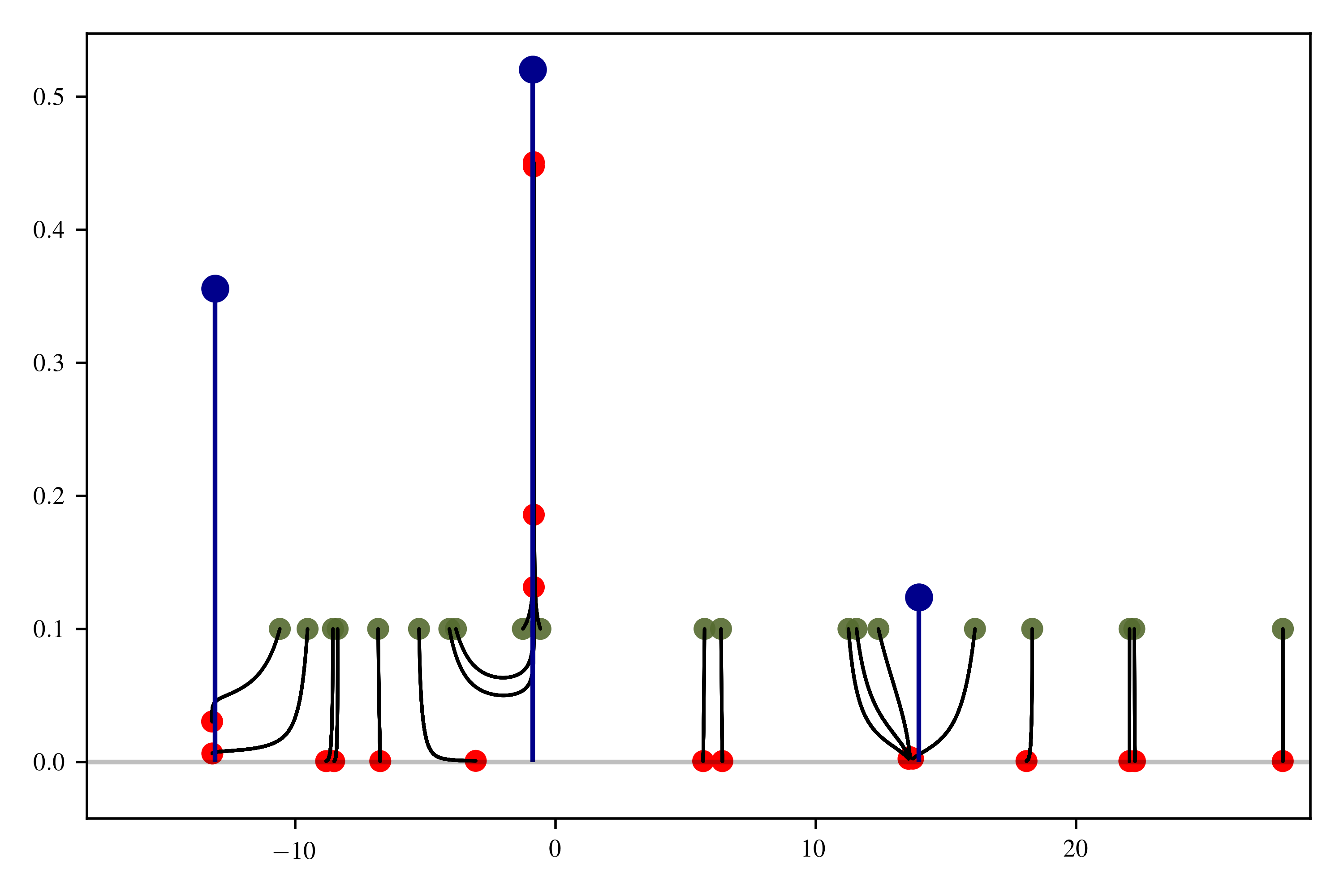}
\end{center}
\caption{Conic Particle Gradient Descent for BLASSO Mixture Models. We consider the mixing law~$\mu^0$ made by three Dirac masses (in blue) at location $(-13.1,-0.9,14.0)$ (chosen at random) and amplitudes $(0.36,0.52,0.12)$(chosen at random). We draw $n=200$ iid samples with respect to the mixture with density $f^0=\mu^0\star\varphi$ where $\varphi$ is the pdf of standard Gaussian. Then we start CPGD for BLASSO (with parameters $\kappa=0.01$ and $\tau=0.1$) with $20$ particules (in green) located at random and we run $2,500$ gradient steps (with parameters $\alpha=0.05$ and $\beta=1$) as in Algorithm~\ref{sec:cpgd}. The final locations $(t_i)$ and weights $(a_i)$ are given in red $($for readability we represented $(t_i,2*a_i))$.}
\label{fig:cpgd}
\end{figure}

\section{Statistical recovery of $\mu^0$}\label{sec:stat_mu0}
This section provides some theoretical results for $\hat \mu_n$, built  as the solution of~\eqref{eq:blasso}. Contrary to $\ell_1$-regularization in high-dimensions, standard  RIP or REC  compatibility conditions do not hold in our context, and all the cornerstone results of high-dimensional statistics cannot be used here. In our situation, the statistical analysis is performed using a ‘‘dual certificate''~$\Pm$ as in super-resolution, see~\cite{Azais_DeCastro_Gamboa_15,Bhaskar_Tang_Recht12,Candes_FernandezGranda_14,Duval_Peyre_JFOCM_15} for instance. The construction and the key properties satisfied by $\Pm$ are detailed in Section \ref{sec:dual_certificate}. However, our framework is quite different from super-resolution and we had to address two issues: build a dual certificate on  $\mathds R^d$ and adapt its ‘‘frequency cut-off'' (namely $4m$ in $iii)$ of Theorem~\ref{theo:main_certificate}) to the sample size $n$ and the tail of the kernel. This latter point is addressed in Section~\ref{s:rates}. 


\subsection{Strong dual certificate}\label{sec:dual_certificate}

Let $S^0=\{t_1,\ldots,t_K\}$ be a fixed set of points in $\bR^d$ and define  $\Delta := \min_{k \neq \ell} \|t_k-t_\ell\|_2.$
For any $m\in \mathds{N}^*$, we consider the function $p_m^{\alpha,\beta}$ parameterized by a vector $\alpha$ and a matrix $\beta$ of coefficients, defined as:
\begin{equation}\label{def:pm_init}
 p^{\alpha,\beta}_m(t) = \sum_{k=1}^K \left\lbrace \alpha_k \psi_{m}(t-t_k) + \langle \beta_k,  \nabla \psi_{m}(t-t_k) \rangle \right\rbrace, \quad \forall t\in \bR^d,
 \end{equation}
where $\alpha=(\alpha_1,\ldots,\alpha_K)^{T}$, $\beta=(\beta_k^i)_{1 \leq k \leq K, 1 \leq i \leq d}$ with
\begin{equation}
\psi_m(.) = \psi^4(m .) \text{ with } 
\forall u=(u^1,\ldots,u^d) \in \bR^d \quad \psi(u) = \prod_{j=1}^d \mathrm{sinc}(u^j) 
\text{ and }  \mathrm{sinc}(x) =\frac{\sin(x)}{x}.
\label{eq:psim_init}
\end{equation} 
One important feature of $\psi_m$ is its ability to interpolate $1$ at the origin, while being positive and compactly supported in the Fourier domain. We then state the next result, which is of primary importance for the statistical accuracy of our procedure.

\begin{theorem}[Strong dual certificate]\label{theo:main_certificate}
Let be given a set of $K$ points $S^0=\{t_1,\ldots,t_K\}$ in~$\bR^d$ with
$\Delta := \min_{k \neq \ell} \|t_k-t_\ell\|_2$ {and $\Delta_+ = \min(\Delta,1)$}. Then, the following properties hold:
\begin{itemize}
\item $i)$ A function $\Pm$ defined by 
$
\Pm(t) = [p_m^{\alpha,\beta}(t)]^2 
$
exists with {$\displaystyle m \gtrsim  \sqrt{K}d^{3/2} \Delta_+^{-1} $}  such that
$$
\forall k \in [K],\ \Pm(t_k)=1 \qquad \text{and} \qquad 0 \leq \Pm  \leq 1$$
and 
$$\Pm(t)=1 \Longleftrightarrow t \in S^0 = \{t_1, \ldots, t_K\}.$$ 
\item $ii)$ A universal pair $(\upsilon,\etaa)$ independent from $n,m$ and $d$ exists such that  for 
{
$
\epsilon =\frac{\upsilon}{ m d}:
$
}
\begin{itemize}
\item \underline{Near region:} If we define 
$$\bbN(\epsilon):=\bigcup_{k=1}^K \bbN_{k}(\epsilon) \textrm{ where } \bbN_{k}(\epsilon):= \{ t : \|t-t_k\|_2 \leq \epsilon\},$$ 
 a positive constant $\mathcal{C}$ exists such that:
$$
\forall t \in \bbN_k(\epsilon): \qquad
0 \leq \Pm(t) \leq 1 - \mathcal{C} m^2 \|t-t_k\|_2^2.
$$
\item \underline{Far region:}
$$
\forall t \in \bbF(\epsilon):= \bR^d \setminus \bbN(\epsilon): \qquad 0 \leq \Pm(t) \leq 1-\etaa {\frac{\upsilon^2 }{  d^3}}.
$$
\end{itemize}
\item$iii)$ The support of the Fourier transform of $\Pm$ is growing linearly with $m$:  
$$
\mathrm{Supp}(\mathcal{F}[\Pm]) \subset [-4m,4m]^d \qquad \text{and} \qquad \|\Pm\|_2 \lesssim K^2 m^{-d/2}. 
$$
\item $iv)$ If  \eqref{eq:H1_intro} holds with $\eta=4m$, then an element $\com \in \bbL$ exists such that $\Pm = \Phi \com$. 
\end{itemize}

\end{theorem}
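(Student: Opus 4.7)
The overall strategy is to adapt the interpolation construction of Candès--Fernandez-Granda from the torus to $\bR^d$, replacing the Dirichlet-type trigonometric kernels by the tensor-product building block $\psi_m=\psi^4(m\,\cdot)$, which is compactly Fourier-supported and rapidly decaying. Squaring at the end to obtain $\mathcal{P}_m=(p_m^{\alpha,\beta})^2$ gives non-negativity, and turns the ``interpolation up to gradient'' conditions on $p_m^{\alpha,\beta}$ into genuine maxima of $\mathcal{P}_m$ at each $t_k$.

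For $(i)$, I would determine the pair $(\alpha,\beta)$ as the unique solution of the $K(d+1)\times K(d+1)$ linear system encoding $p_m^{\alpha,\beta}(t_k)=1$ and $\nabla p_m^{\alpha,\beta}(t_k)=0$ for each $k$. The associated block matrix has diagonal blocks close to the identity (using $\psi_m(0)=1$, $\nabla\psi_m(0)=0$, $\nabla^2\psi_m(0)\asymp -m^2 I$) and off-diagonal blocks $\psi_m(t_k-t_\ell)$, $\nabla\psi_m(t_k-t_\ell)$, $\nabla^2\psi_m(t_k-t_\ell)$ that decay as inverse powers of $m\|t_k-t_\ell\|_2\geq m\Delta_+$ thanks to the $\mathrm{sinc}^4$ tails. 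After rescaling the gradient rows and columns by $1/m$, the off-diagonal part has an operator-norm bound that combines the $K$ atoms with the $d$-dimensional gradient/Hessian degrees of freedom; the hypothesis $m\gtrsim \sqrt{K}\,d^{3/2}\Delta_+^{-1}$ is exactly what makes this perturbation small enough for a Neumann-series inversion, and yields the quantitative bounds $\alpha_k=1+o(1)$ and $\|m\,\beta_k\|_\infty=o(1)$. The strict inequality $|p_m^{\alpha,\beta}(t)|<1$ off $S^0$ then splits into a near and a far regime treated as in $(ii)$.

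For $(ii)$, the near estimate comes from a second-order Taylor expansion of $\mathcal{P}_m$ at $t_k$: by construction $\nabla\mathcal{P}_m(t_k)=0$ and $\nabla^2\mathcal{P}_m(t_k)=2\,\nabla^2 p_m^{\alpha,\beta}(t_k)$, whose leading term $2\alpha_k\nabla^2\psi_m(0)\asymp -m^2 I$ is negative definite with spectral scale $m^2$. Choosing $\epsilon\asymp 1/(md)$ ensures that the third-order Taylor remainder (of size $m^3\|t-t_k\|_2^3$ with a $d$-factor from multi-index differentiation) remains strictly smaller than the quadratic main term, producing $\mathcal{P}_m(t)\leq 1-\mathcal{C}m^2\|t-t_k\|_2^2$ on $\bbN_k(\epsilon)$. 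The far bound is then obtained by two observations: on $\partial \bbN_k(\epsilon)$ the near estimate yields a uniform deficit $\asymp\upsilon^2/d^3$, and outside $\bigcup_k\bbN_k(\epsilon)$ the $\mathrm{sinc}^4$ decay of $\psi_m$ keeps $|p_m^{\alpha,\beta}|$ strictly below $1$ uniformly. For $(iii)$, the compact Fourier support of $\mathrm{sinc}^4$ (a finite convolution of $\widehat{\mathrm{sinc}}$'s) dilates to a box of side $O(m)$ for $\widehat{\psi_m}$; translation and gradient operations (multiplication by $i\omega$ in Fourier) preserve this support, and the squaring convolves Fourier transforms, yielding $\mathrm{Supp}(\cF[\mathcal{P}_m])\subset[-4m,4m]^d$ up to the correct constant tracking. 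The $L^2$ bound follows from $\|\mathcal{P}_m\|_2\leq\|p_m^{\alpha,\beta}\|_\infty\|p_m^{\alpha,\beta}\|_2$ together with $\|p_m^{\alpha,\beta}\|_\infty\lesssim K$ (from the Neumann-series control) and $\|p_m^{\alpha,\beta}\|_2\lesssim K\,m^{-d/2}$ (Plancherel plus $\|\psi_m\|_2\asymp m^{-d/2}$).

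Finally, $(iv)$ is immediate under $(\cH_\eta)$ with $\eta=4m$: the spectral density $\sigma$ is strictly positive on $[-4m,4m]^d\supset\mathrm{Supp}(\cF[\mathcal{P}_m])$, hence $\com:=\cF^{-1}[\cF[\mathcal{P}_m]/\sigma]$ is well-defined, and~\eqref{eq:Fourier_Phi} yields $\Phi\com=\mathcal{P}_m$; membership $\com\in\bbL$ is checked against the characterization~\eqref{eq:equaL} using the $L^2$ bound from $(iii)$. The main obstacle I foresee is the quantitative book-keeping in $(i)$: extracting the sharp exponents $\sqrt{K}$ and $d^{3/2}$ in the separation condition requires simultaneous operator-norm control on values, gradients and Hessians of $\psi_m$ at inter-atom distances, and the resulting cut-off $\epsilon\asymp 1/(md)$ propagates as the hinge between near and far regimes in $(ii)$, so any looseness there compounds into the final constants.
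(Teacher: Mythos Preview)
Your overall architecture matches the paper's: the same linear system for $(\alpha,\beta)$ with Neumann-series inversion, Taylor analysis in the near region, and Fourier-support/$L^2$ bookkeeping for $(iii)$ and $(iv)$. Two points deserve correction, however.

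First, you mis-attribute the constraint $m\gtrsim\sqrt{K}\,d^{3/2}\Delta_+^{-1}$. The Neumann inversion only needs the weaker $m\gtrsim K^{1/4}d^{3/4}\Delta^{-1}$ (this is what makes the off-diagonal blocks $A_m,B_m,D_m$ small enough). The sharper condition $m\gtrsim\sqrt{K}\,d^{3/2}\Delta^{-1}$ actually emerges later, from the \emph{far-region} analysis: one needs the cumulative cross-atom contribution $\sum_{k\neq i^\star}|\alpha_k||\psi_m(t-t_k)|\lesssim Kd^2/(m^4\Delta^4)$ to be dominated by the deficit $\upsilon^2/d^3$, which forces $Kd^3/(m^2\Delta^2)$ to be small. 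The additional piece $m\gtrsim\sqrt{Kd}$ (handling large~$\Delta$) comes from the $L^2$ estimate in $(iii)$.

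Second, and more substantively, your far-region argument has a gap. The two observations you list (``boundary deficit'' plus ``$\mathrm{sinc}^4$ decay'') do not combine into a uniform bound on $\bbF(\epsilon)$: there is no maximum principle available, and decay at infinity says nothing about the intermediate annulus $\epsilon\leq\|t-t_{i^\star}\|_2\leq O(1/m)$ where $\psi_m(t-t_{i^\star})$ is still close to~$1$. The paper's argument is different: for $t\in\bbF(\epsilon)$ with nearest atom $t_{i^\star}$, one coordinate $\ell$ satisfies $|t^\ell-t_{i^\star}^\ell|\geq\upsilon/(md^{3/2})$ by pigeonhole, and the pointwise bound $|\mathrm{sinc}(x)|\leq(1-x^2/12)\vee\tfrac12$ then gives $|\psi_m(t-t_{i^\star})|\leq 1-c\,\upsilon^2/d^3$ directly. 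This coordinate-wise estimate (not decay) is the source of the $d^{-3}$, and it is what your sketch is missing.

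A minor remark on $(iii)$: your H\"older route $\|\mathcal P_m\|_2\leq\|p_m\|_\infty\|p_m\|_2$ is correct and in fact cleaner than the paper's Young-inequality computation $\|\cF[p_m]\star\cF[p_m]\|_2\leq\|\cF[p_m]\|_2\|\cF[p_m]\|_1$; once $|p_m|\leq1$ is established you even get $\|\mathcal P_m\|_2\lesssim K\,m^{-d/2}$ rather than $K^2 m^{-d/2}$.
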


The proof of this result  is proposed in Appendix \ref{s:dual}. 
This construction is inspired from the one given in~\cite{Candes_FernandezGranda_14}, which has been adapted to our specific setting. 
We emphasize that the size of the spectrum of $\Pm$ increases linearly with $m$, while the effect of the number of points $K$, the dimension $d$, and the spacing $\Delta$ between the location parameters $\lbrace t_1,\dots, t_K \rbrace$  is translated in the initial constraint {$m \gtrsim  \sqrt{K} d^{3/2}\Delta_+^{-1} $}.

We also state a complementary result, that will be useful for the proof of Theorem \ref{thm:main}, $iii)$.

\begin{corollary}\label{coro:Qmk}
Let be given a set of $K$ points $S^0=\{t_1,\ldots,t_K\}$  such that 
$\Delta := \min_{k \neq \ell} \|t_k-t_\ell\|_2$. Let {$m \gtrsim  \sqrt{K} d^{3/2}\Delta_+^{-1} $}.
Then, for any $k \in [K]$, a function $\Qmk$ exists such that
$$
\forall i \in [K] \qquad \Qmk(t_i) =\delta_{i}(k) \quad \text{and} \quad 0 \leq \Qmk \leq 1,
$$
and a universal couple of constants  $(\upsilon,\etaa)$ exists such that the function $\Qmk$ satisfies for {$\epsilon =\frac{\upsilon}{ m d}$:}
\begin{itemize}
\item[$i)$] Near region $\bbN_k(\epsilon)$:  a positive constant $\widetilde{\mathcal{C}}$ exists such that:
$$
\forall t \in \bR^d \qquad \|t-t_k\|_2 \leq \epsilon \Longrightarrow 0 \leq \Qmk(t) \leq 1 - \widetilde{\mathcal{C}} m^2 \|t-t_k\|_2^2,
$$
\item[$ii)$] Near region $\bbN(\epsilon) \setminus \bbN_k(\epsilon)$:
$$
\forall i \neq k \qquad 
\|t-t_i\|_2 \leq \epsilon \Longrightarrow |\Qmk(t)| \leq \widetilde{\mathcal{C}} m^2 \|t-t_i\|_2^2.
$$
\item[$iii)$] Far region $\bbF(\epsilon)$:
$$
\forall t \in \bbF(\epsilon),\ 
0 \leq \Qmk(t) \leq 1 - \gamma {\frac{\upsilon^2}{d^3}}.
$$
\item[$iv)$] A $c_{k,m} \in \bbL$ exists such that $\Qmk = \Phi c_{k,m}$. 
\end{itemize}
\end{corollary}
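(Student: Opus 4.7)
The strategy is to mirror verbatim the construction of the dual certificate $\Pm$ from Theorem~\ref{theo:main_certificate}, merely replacing the interpolation target $\mathds{1}_K$ by the Kronecker vector $e_k$, so that the resulting interpolant vanishes at every spike except $t_k$.

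First, I would fix $k\in[K]$ and consider the same ansatz as in~\eqref{def:pm_init}:
\[
q_k(t) \;=\; \sum_{i=1}^K \Bigl\{ \alpha_i\,\psi_m(t-t_i) + \langle \beta_i,\nabla \psi_m(t-t_i)\rangle\Bigr\},
\]
with $(\alpha,\beta)\in\bR^K\times\bR^{K\times d}$ determined by the $K(d+1)$ interpolation equations $q_k(t_j)=\delta_j(k)$ and $\nabla q_k(t_j)=0$ for $j\in[K]$. The matrix of this linear system depends only on $\psi_m$ and its derivatives evaluated at the differences $t_i-t_j$; it is therefore \emph{identical} to the one used in the $\Pm$ construction. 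Under $m \gtrsim \sqrt{K}\,d^{3/2}\,\Delta_+^{-1}$, the same perturbation-of-the-identity argument shows that it is invertible. Only the right-hand side changes (from $\mathds{1}_K$ to $e_k$), and since $\|e_k\|_\infty = \|\mathds{1}_K\|_\infty = 1$, the uniform coefficient bounds $\|\alpha\|_\infty$, $m\|\beta\|_\infty \lesssim 1$ established in the proof of Theorem~\ref{theo:main_certificate} carry over to $q_k$ without change.

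Second, I would set $\Qmk := q_k^2$ and check the four claimed properties. Non-negativity is immediate, and $\Qmk(t_j) = \delta_j(k)^2 = \delta_j(k)$ gives the interpolation. The same pointwise estimates derived in Theorem~\ref{theo:main_certificate} for $p_m^{\alpha,\beta}$ apply here to $q_k$ (they depend only on the coefficient bounds above and on $\psi_m$), yielding $|q_k|\leq 1$ globally, hence $\Qmk\leq 1$. For $i)$, near $t_k$ the quadratic decay $q_k(t)\leq 1 - C m^2\|t-t_k\|_2^2$ with $0\leq q_k\leq 1$ gives $\Qmk(t) = q_k(t)^2 \leq q_k(t) \leq 1-\widetilde{\mathcal{C}}m^2\|t-t_k\|_2^2$. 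For $ii)$, near any $t_i$ with $i\neq k$, a second-order Taylor expansion together with $q_k(t_i)=0$, $\nabla q_k(t_i)=0$, and the Hessian bound $\|\nabla^2 q_k\|_\infty \lesssim m^2$ yields $|q_k(t)|\lesssim m^2\|t-t_i\|_2^2$; since $|q_k|\leq 1$ on $\bbN_i(\epsilon)$ with $\epsilon=\upsilon/(md)$, this gives $|\Qmk(t)|\leq |q_k(t)|\leq \widetilde{\mathcal{C}}m^2\|t-t_i\|_2^2$. For $iii)$, the far-region bound $|q_k|\leq 1-\gamma'\upsilon^2/d^3$ established for $p_m^{\alpha,\beta}$ in Theorem~\ref{theo:main_certificate} yields the corresponding bound on $\Qmk=q_k^2$.

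Third, for $iv)$, $q_k$ is a linear combination of translates of $\psi_m$ and its partial derivatives, so $\cF[q_k]$ has the same spectral support as $\cF[\psi_m]$; by the same convolution argument as in Theorem~\ref{theo:main_certificate}, $iii)$, the spectrum of $\Qmk=q_k^2$ is contained in the band on which $\sigma$ is positive by \eqref{eq:H1_intro} (with $\eta=4m$). Defining $c_{k,m}:=\cF^{-1}\bigl[\cF[\Qmk]/\sigma\bigr]$ then gives $c_{k,m}\in\bbL$ and $\Phi c_{k,m}=\Qmk$.

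The main obstacle is not conceptual but bookkeeping: one must verify that every quantitative estimate (coefficient bounds, near-region quadratic behaviour, far-region uniform bound) obtained for $\Pm$ really depends on the target only through its $\ell_\infty$ norm and therefore transfers to $\Qmk$ with constants that can be taken uniformly over $k\in[K]$. Since the interpolation matrix is common and the two right-hand sides have equal $\ell_\infty$ norm, no fresh quantitative work is required beyond keeping track of how $\widetilde{\mathcal{C}}$ absorbs the $m^2$ factor arising from $\|\nabla^2 q_k\|_\infty$.
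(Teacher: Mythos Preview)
Your proposal is correct and matches the paper's approach exactly: the paper omits the proof entirely, stating only that ``the construction of $\Qmk$ obeys the same rules as the construction of $\Pm$ (the interpolation conditions only differ at points $t_i, i\neq k$ and are switched from $1$ to $0$).'' Your write-up supplies precisely those omitted details---same ansatz, same interpolation matrix, right-hand side $e_k$ in place of $\mathbf 1_K$, then squaring---and correctly observes that the quantitative bounds depend on the target only through its $\ell_\infty$ norm, so they transfer unchanged.
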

Proofs of $i),ii),iii)$ are similar to those of Theorem \ref{theo:main_certificate} and are omitted:  the construction of~$\Qmk$ obeys the same rules as the construction of $\Pm$ (the interpolation conditions only differ at points $t_i,i \neq k$ and are switched from $1$ to $0$). 

\subsection {Bregman divergence $\DP(\hat\mu_n,\mu^0)$}\label{sec:bregman}

Below, the statistical loss between $\hat\mu_n$ and $\mu^0$ will be obtained in terms of the Bregman divergence associated to the dual certificate $\Pm$ obtained in Theorem \ref{theo:main_certificate}. This divergence is defined by:
\begin{equation}\label{eq:def_bregman}
	\DP(\hat\mu_n,\mu^0) :=  \| \hat\mu_n \|_1 - \| \mu^0\|_1 - \int_{\bR^d} \Pm\dd(\hat\mu_n - \mu^0)\geq0\,.
\end{equation}

\noindent 
We also introduce the term $\vn$ defined as
$$ \vn = L\hat f_n - L\circ\Phi \mu^0,$$
which models the difference between the target $f^0 = \Phi \mu^0$ and its empirical counterpart $\hat f_n$ in the RKHS. The next result provides a control between $\hat \mu_n$ and~$\mu^0$ with the Bregman divergence. 

\begin{prop}
\label{prop:up}
Let $\Pm= \Phi \com$ the dual certificate obtained in Theorem \ref{theo:main_certificate}.
Let $(\rho_n)_{n\in \mathds{N}^*}$ be a sequence such that $\E[ \| \vn \|_\bbL^2] \leq \rho_n^2$ for all $n\in \mathds{N}^*$. 
If $\kappa$ is chosen such that 
\eq
\label{e:kappa}
\kappa = \frac{\rho_n }{\| \com \|_\bbL}
\qe 
and if $\hat{\mu}_n$ is defined in~\eqref{eq:blasso}, then:
\begin{itemize}
\item[$i)$] For any integer $n$:
$$
\mathds{E} \left[\DP(\hat\mu_n,\mu^0)\right] \leq \frac{3 \sqrt{2}}{2} \rho_n  \|\com\|_{\bbL} \, , 
$$
\item[$ii)$] $\com \in \bbL$ satisfies

\begin{equation}
 \|\com\|_{\bbL} {\leq}  \sqrt{ \frac{\| \Pm \|_2^2}
 {\displaystyle\inf_{\|t\|_\infty\leq 4m} \left\lbrace \sigma^2(t) \mathcal{F}[\lambda](t) \right\rbrace} } 
   \lesssim \underbrace{\frac{K^2 m^{-d/2}}
 {\sqrt{\displaystyle\inf_{\|t\|_\infty\leq 4m} \left\lbrace \sigma^2(t) \mathcal{F}[\lambda](t) \right\rbrace} }}_{:= \mathcal{C}_m(\varphi,\lambda)}
 \,.
\label{eq:fourre_tout}
\end{equation}
\end{itemize}
\end{prop}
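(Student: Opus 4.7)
The proof of Proposition~\ref{prop:up} naturally splits into two pieces: a BLASSO-style basic inequality producing $i)$, and a Fourier computation producing $ii)$. My plan for $i)$ is to exploit convex optimality of $\hat\mu_n$ against the feasible point $\mu^0$, then recognize the resulting cross term as the Bregman-divergence pairing with $\Pm=\Phi\com$. Writing $h_n:=L\circ\Phi(\hat\mu_n-\mu^0)$ so that $L\hat f_n-L\circ\Phi\hat\mu_n=\vn-h_n$, the inequality $J(\hat\mu_n)\le J(\mu^0)$ for the objective in~\eqref{eq:blasso} expands, after cancellation of $\|\vn\|_\bbL^2/2$, to
\[
\tfrac12\|h_n\|_\bbL^2+\kappa(\|\hat\mu_n\|_1-\|\mu^0\|_1)\le\langle\vn,h_n\rangle_\bbL.
\]

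The second step is the representation identity $\int\Pm\,\dd(\hat\mu_n-\mu^0)=\langle\com,h_n\rangle_\bbL$. Using $\Pm=\Phi\com$, namely $\cF[\Pm]=\sigma\,\cF[\com]$, together with $\cF[h_n]=\cF[\lambda]\,\sigma\,\cF[\hat\mu_n-\mu^0]$ and the Fourier expression~\eqref{eq:equaL} of the RKHS inner product, Parseval yields precisely this identity (after standard normalization). Subtracting $\kappa\langle\com,h_n\rangle_\bbL$ from both sides of the previous display gives
\[
\kappa\,\DP(\hat\mu_n,\mu^0)\le\langle\vn-\kappa\com,h_n\rangle_\bbL-\tfrac12\|h_n\|_\bbL^2\le\tfrac12\|\vn-\kappa\com\|_\bbL^2,
\]
by Cauchy-Schwarz and $ab-b^2/2\le a^2/2$. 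Since $\E[\hat f_n]=f^0=\Phi\mu^0$ we have $\E[\vn]=0$, so taking expectation yields $\kappa\,\E[\DP]\le\tfrac12(\rho_n^2+\kappa^2\|\com\|_\bbL^2)$. Setting $\kappa=\rho_n/\|\com\|_\bbL$ as in~\eqref{e:kappa} produces an upper bound of the form $C\,\rho_n\|\com\|_\bbL$; tracking the constants (or using Jensen on $\E[\|\vn-\kappa\com\|_\bbL]$ rather than centering) delivers the $\tfrac{3\sqrt2}{2}$ prefactor stated.

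For $ii)$, the plan is a direct Fourier-side computation. Since $\Pm=\Phi\com$ with $\com\in\bbL$ (Theorem~\ref{theo:main_certificate} $iv)$), the identity $\cF[\com]=\cF[\Pm]/\sigma$ holds on $\mathrm{Supp}(\sigma)$, and~\eqref{eq:equaL} gives $\|\com\|_\bbL^2=\int|\cF[\Pm]|^2/(\sigma^2\cF[\lambda])\,\dd t$. By Theorem~\ref{theo:main_certificate} $iii)$, $\cF[\Pm]$ is supported in $[-4m,4m]^d$, so the integrand is supported there, and one may factor out the infimum of $\sigma^2\cF[\lambda]$ over that cube. Parseval applied to the remaining integral $\int|\cF[\Pm]|^2\,\dd t$ together with $\|\Pm\|_2\lesssim K^2 m^{-d/2}$ (again from Theorem~\ref{theo:main_certificate} $iii)$) yields both inequalities in~\eqref{eq:fourre_tout}.

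The main obstacle is the Fourier identity of the second paragraph: $\hat\mu_n-\mu^0$ is only a (tempered) measure, so one must justify that $\cF[h_n]=\cF[\lambda]\sigma\cF[\hat\mu_n-\mu^0]$, that $\cF[\Pm]$ belongs to $L^2$ with enough decay, and that Parseval is legitimately applied to the pairing $\int\Pm\,\dd(\hat\mu_n-\mu^0)$. All of this follows from the spectral-support constraint on $\cF[\Pm]$ plus the integrability of $\cF[\lambda]$ and $\sigma$ built into~\eqref{eq:H0} and~\eqref{eq:H1_intro}, but it is the only non-routine verification; the remaining steps are convex-analytic boilerplate.
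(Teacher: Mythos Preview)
Your proof is correct and follows essentially the same route as the paper: optimality of $\hat\mu_n$ against $\mu^0$, the identification $\int\Pm\,\dd(\hat\mu_n-\mu^0)=\langle\com,h_n\rangle_\bbL$, a quadratic bound, and the Fourier computation for $ii)$. The paper derives the representation identity via self-adjointness of $\Phi$ in $L^2$ together with the reproducing property of $\lambda$, whereas you do it by Parseval; these are the same computation in different clothes.

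One minor remark on constants: your centering argument $\E[\vn]=0$ actually gives $\E[\DP]\le\rho_n\|\com\|_\bbL$ with the choice $\kappa=\rho_n/\|\com\|_\bbL$, which is \emph{sharper} than the stated $\tfrac{3\sqrt2}{2}\rho_n\|\com\|_\bbL$. The paper instead completes the square with $\tfrac\kappa2\com$, drops a nonnegative term, and applies the crude inequality $\|a+b\|_\bbL^2\le\tfrac32\|a\|_\bbL^2+3\|b\|_\bbL^2$, which (with $\kappa=\sqrt2\,\rho_n/\|\com\|_\bbL$ in the proof) produces the $\tfrac{3\sqrt2}{2}$ factor. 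So your final sentence about ``tracking the constants'' to recover $\tfrac{3\sqrt2}{2}$ is backwards: you do not need to match that constant, you already beat it.
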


\noindent
\newpage
The proof of Proposition \ref{prop:up} is postponed to Section \ref{s:propup}. The previous results indicate that the Bregman divergence between our estimator $\hat \mu_n$ and the target measure $\mu^0$ depends, up to some constants, on three main quantities:
\begin{itemize}
\item The variance of the empirical measure through the operator $L$ quantified by $\rho_n$,
\item The Fourier transform $\sigma$ of the convolution kernel $\varphi$ over the interval $[-4m;4m]^d$. This term measures the ill-posedness of the inverse problem, which is associated to the difficulty to recover $\mu^0$ with  indirect observations (here $f^0 = \Phi \mu^0$ and we need to invert~$\Phi$),
\item The structure of the RKHS  used to smooth the problem  identified through the kernel $\lambda$.
\end{itemize}

\begin{remark}
By using similar arguments to prove item ii) of Proposition \ref{prop:up}, we can complete item (iv) of Corollary \ref{coro:Qmk} as follows: 
A $c_{k,m} \in \bbL$ exists such that $\Qmk = \Phi c_{k,m}$ and 
\begin{equation}\label{ineq:ckm}
\|c_{k,m}\|_{\bbL} \lesssim \frac{K^2 m^{-d/2}}
{\sqrt{\displaystyle\inf_{\|t\|_\infty\leq 4m} \left\lbrace \sigma^2(t) \mathcal{F}[\lambda](t) \right\rbrace} }.
\end{equation}
\end{remark}

\begin{remark}
We will derive from Proposition \ref{prop:up} some explicit convergence rates in each specific situation, \textit{i.e.} as soon as the quantities involved in Equation~\eqref{eq:fourre_tout} are made precise on some concrete examples. These rates will depend on the tuning parameter $m$ for solving the optimization problem~\eqref{eq:blasso},
and on the choice of the kernel $\lambda$. Some examples will be discussed in Section \ref{s:rates}. 
 Indeed, $\kappa$ is related to $m$ through the relationship $\kappa = \rho_n / \| \com \|_\bbL$. Similarly, we will see in  Section \ref{s:rates} that the kernel $\lambda$ is also linked to $m$ in a transparent way. We stress that  according to Proposition \ref{prop:up}, $m \gtrsim  \sqrt{K} d^{3/2}\Delta_+^{-1} $. Such a condition will be satisfied provided $m$ is allowed to go to infinity with $n$ whereas $K,\Delta,d$ are kept fixed. 
\end{remark}

\begin{remark}
The upper bound proposed in Proposition \ref{prop:up} only uses items (iii) and (iv) of Theorem \ref{theo:main_certificate}. An enhanced control on the performances of $\hat \mu_n$ is provided in the next section. Alternative features will be also proposed with the alternative certificate $\mathcal{Q}_m$ introduced in Corollary \ref{coro:Qmk} .  
\end{remark}

\subsection{Statistical recovery of far and near regions}

The next result sheds light on the performance of the BLASSO estimator introduced in Equation~\eqref{def:mun}.
The goodness-of-fit reconstruction of the mixture distribution $\mu^0$ by $\hat{\mu}_n$
is translated by the statistical properties of the computed weights of $\hat{\mu}_n$ around the spikes of $\mu^0$ (the support points $S^0=\{t_1,\ldots,t_K\}$), which will define a family of $K$  \textit{near regions}, as well as the behaviour of $\hat{\mu}_n$ in the complementary set, the \textit{far region}. The sets $\bbF(\epsilon)$ and $\bbN(\epsilon)$ have already been introduced in Theorem \ref{theo:main_certificate}.
Our result takes advantage on the previous bounds and on $i)$ and $ii)$ of Theorem \ref{theo:main_certificate}. 

\begin{theorem}
\label{thm:main}
Let {$m \gtrsim  \sqrt{K} d^{3/2}\Delta_+^{-1} $} and let $\Pm$ be a dual certificate given in Theorem~\ref{theo:main_certificate}. Assume that $\hat \mu_n$ is the BLASSO estimator given by~\eqref{eq:blasso} with $\kappa=\kappa_n$ chosen in Proposition \ref{prop:up}. Let $\mathcal{C}_m(\varphi,\lambda)$ be the quantity introduced in Proposition \ref{prop:up}, $\hat\mu_n = \hat\mu_n^+ - \hat\mu_n^-$ the Jordan decomposition of $\hat\mu_n$. A universal couple of constants $(\gamma,\upsilon)$ exists such that,
if 
\begin{equation}
\label{eq:cmphi}
\epsilon =\frac{\upsilon}{ m d},
 \end{equation}    
\begin{itemize}
\item[$i)$] Far region and negative part:
$$
\E\left[ \hat\mu_n^-(\bR^d)\right] \leq  \frac{3\sqrt{2}}{2} \rho_n \, \mathcal{C}_m(\varphi,\lambda) \textrm{ and }  \E\left[\hat\mu_n^+(\bbF(\epsilon))\right] \leq  \frac{3\sqrt{2}}{2}\, {\frac{d^3}{\etaa \upsilon^2}}  \rho_n \, \mathcal{C}_m(\varphi,\lambda) .\\
$$
\item[$ii)$] Near region (spike detection): a positive constant $\mathcal{C}$ exists such that
$$\forall A \subset \bR^d, \quad  \E[\hat\mu_n^+(A)] > \frac{3\sqrt{2}}{2}\, {\frac{d^3}{\etaa \upsilon^2}} \,\rho_n \, \mathcal{C}_m(\varphi,\lambda) \quad\Longrightarrow \quad \min_{k\in[K]} \inf_{t\in A}\| t-t_k\|_2^2 \leq {\frac{\etaa \upsilon^2}{\mathcal{C} d^3 m^2}}.  
$$
\item[$iii)$] 
Near region (weight reconstruction):
 for any $k \in [K]$:
$$
\E \left[ |a_k^0-\hat{\mu}_n(\bbN_k(\epsilon))| \right] \lesssim 
\rho_n \mathcal{C}_m(\varphi,\lambda).
$$
\end{itemize}
\end{theorem}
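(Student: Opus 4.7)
I start by rewriting the Bregman divergence in a form adapted to the dual certificate. Since $\Pm(t_k)=1$ for each $k\in[K]$, one has $\int\Pm\,\mathrm d\mu^0=\sum_ka_k^0=1=\|\mu^0\|_1$; combined with the Jordan decomposition $\hat\mu_n=\hat\mu_n^+-\hat\mu_n^-$ this yields
\[
\DP(\hat\mu_n,\mu^0)=\int(1-\Pm)\,\mathrm d\hat\mu_n^{+}+\int(1+\Pm)\,\mathrm d\hat\mu_n^{-},
\]
both integrands being nonnegative thanks to $0\le\Pm\le1$ from Theorem~\ref{theo:main_certificate}~(i). Combined with Proposition~\ref{prop:up}, which delivers $\E[\DP(\hat\mu_n,\mu^0)]\le\tfrac{3\sqrt 2}{2}\rho_n\,\mathcal{C}_m(\varphi,\lambda)$, this identity reduces every conclusion of the theorem to extracting a piece of $\DP$ and invoking a pointwise lower bound on $1-\Pm$ (for parts~(i)--(ii)) or on $|\1_{\bbN_k(\epsilon)}-\Qmk|$ (for part~(iii)).

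For part~(i), the inequality $1+\Pm\ge 1$ on $\bR^d$ forces $\hat\mu_n^-(\bR^d)\le\DP(\hat\mu_n,\mu^0)$, while the far-region bound $1-\Pm\ge\etaa\upsilon^2/d^3$ from Theorem~\ref{theo:main_certificate}~(ii) gives $\hat\mu_n^+(\bbF(\epsilon))\le(d^3/\etaa\upsilon^2)\,\DP(\hat\mu_n,\mu^0)$; taking expectations concludes. Part~(ii) follows by contraposition: under $\min_k\inf_{t\in A}\|t-t_k\|_2^2>\etaa\upsilon^2/(\mathcal{C}d^3m^2)$, each $t\in A$ lies either in $\bbF(\epsilon)$ (where the far-region bound applies directly) or in some $\bbN_i(\epsilon)$ with $\|t-t_i\|_2^2>\etaa\upsilon^2/(\mathcal{C}d^3m^2)$ (where the near-region bound yields $1-\Pm(t)\ge\mathcal{C}m^2\|t-t_i\|_2^2\ge\etaa\upsilon^2/d^3$). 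Hence $1-\Pm\ge\etaa\upsilon^2/d^3$ on all of $A$, so $\hat\mu_n^+(A)\le(d^3/\etaa\upsilon^2)\,\DP(\hat\mu_n,\mu^0)$, and its expectation contradicts the hypothesis.

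For part~(iii) I call on the auxiliary certificate $\Qmk$ of Corollary~\ref{coro:Qmk}, whose interpolation identity $\Qmk(t_i)=\delta_i(k)$ yields $\int\Qmk\,\mathrm d\mu^0=a_k^0$. Adding and subtracting $\int\Qmk\,\mathrm d\hat\mu_n$ splits the error as
\[
\hat\mu_n(\bbN_k(\epsilon))-a_k^0=\int\bigl(\1_{\bbN_k(\epsilon)}-\Qmk\bigr)\mathrm d\hat\mu_n+\int\Qmk\,\mathrm d(\hat\mu_n-\mu^0).
\]
The first integral is handled by partitioning $\bR^d=\bbN_k(\epsilon)\cup\bigcup_{i\neq k}\bbN_i(\epsilon)\cup\bbF(\epsilon)$: items~(i) and~(ii) of Corollary~\ref{coro:Qmk} bound $|\1_{\bbN_k(\epsilon)}-\Qmk|$ by $\widetilde{\mathcal{C}}m^2\|t-t_i\|_2^2$ on each $\bbN_i(\epsilon)$, which I transport back to $\DP$ through the near-region bound $1-\Pm\ge\mathcal{C}m^2\|t-t_i\|_2^2$ on $\hat\mu_n^+$ and the trivial estimate $\|t-t_i\|_2\le\epsilon=\upsilon/(md)$ on $\hat\mu_n^-$ (recall $m^2\epsilon^2=\upsilon^2/d^2$); on $\bbF(\epsilon)$ the bound $|\Qmk|\le 1$ combined with part~(i) already proved gives $|\hat\mu_n|(\bbF(\epsilon))\lesssim\DP$. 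Summing, the first integral is $\lesssim\DP(\hat\mu_n,\mu^0)$ and therefore of the claimed order in expectation.

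The crux is the second integral $\int\Qmk\,\mathrm d(\hat\mu_n-\mu^0)$, for which the Bregman identity alone is insufficient and I resort to the primal--dual structure of Theorem~\ref{thm:sol_discrete}. Writing $\Qmk=\Phi c_{k,m}$ with $c_{k,m}\in\bbL$, a Fubini computation together with the reproducing formula $\int f\,\mathrm d\nu=\langle f,L\nu\rangle_\bbL$ for $f\in\bbL$ and the identities $\vn=L\hat f_n-L\Phi\mu^0$, $\hat c_n=L\hat f_n-L\Phi\hat\mu_n$ produce the representation
\[
\int\Qmk\,\mathrm d(\hat\mu_n-\mu^0)=\langle c_{k,m},\,\vn-\hat c_n\rangle_\bbL.
\]
Cauchy--Schwarz and the estimate~(\ref{ineq:ckm}) on $\|c_{k,m}\|_\bbL$ reduce the task to controlling $\E[\|\vn\|_\bbL]\le\rho_n$ and $\E[\|\hat c_n\|_\bbL]$. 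The latter is the genuinely delicate step: I expect to obtain it by comparing the BLASSO objective value at $\hat\mu_n$ with its value at the feasible point $\mu^0$, which yields $\|\hat c_n\|_\bbL^2\le\|\vn\|_\bbL^2+2\kappa(1-\|\hat\mu_n\|_1)_+$, and then using the tuning $\kappa=\rho_n/\|\com\|_\bbL$ from Proposition~\ref{prop:up} to absorb the remainder into $\E[\|\hat c_n\|_\bbL]\lesssim\rho_n$, giving the announced bound $\rho_n\,\mathcal{C}_m(\varphi,\lambda)$.
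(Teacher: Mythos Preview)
Your treatment of parts~(i) and~(ii), and the decomposition you set up for part~(iii), coincide with the paper's approach. The identity $\int\Qmk\,\mathrm d(\hat\mu_n-\mu^0)=\langle c_{k,m},\vn-\hat c_n\rangle_\bbL$ is also the paper's, and correctly reduces everything to controlling $\E[\|\hat c_n\|_\bbL]$ where $\hat c_n=L\hat f_n-L\circ\Phi\hat\mu_n$.

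This last step, however, has a genuine gap as you have written it. The inequality $\|\hat c_n\|_\bbL^2\le\|\vn\|_\bbL^2+2\kappa(1-\|\hat\mu_n\|_1)_+$ is correct, but ``absorbing the remainder'' via $\kappa=\rho_n/\|\com\|_\bbL$ does not deliver $\E[\|\hat c_n\|_\bbL]\lesssim\rho_n$. The only a~priori bound on $(1-\|\hat\mu_n\|_1)_+$ is $1$, which gives $\|\hat c_n\|_\bbL\le\|\vn\|_\bbL+\sqrt{2\kappa}$ with $\sqrt{2\kappa}\asymp\sqrt{\rho_n/\|\com\|_\bbL}$. For this to be $O(\rho_n)$ you would need $\rho_n\|\com\|_\bbL\gtrsim1$, which is the \emph{opposite} of the regime of interest (one wants $\rho_n\|\com\|_\bbL\lesssim\rho_n\,\mathcal{C}_m(\varphi,\lambda)\to0$).

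The repair is not to estimate $1-\|\hat\mu_n\|_1$ crudely but to feed the certificate back into the minimality inequality \emph{before} dropping terms. From $\|\hat c_n\|_\bbL^2+\kappa(\|\hat\mu_n\|_1-\|\mu^0\|_1)\le\|\vn\|_\bbL^2$, write
\[
\|\hat\mu_n\|_1-\|\mu^0\|_1=\DP(\hat\mu_n,\mu^0)+\int\Pm\,\mathrm d(\hat\mu_n-\mu^0)=\DP(\hat\mu_n,\mu^0)+\langle\com,\vn-\hat c_n\rangle_\bbL
\]
(this is precisely the computation leading to~\eqref{eq:up3}). Discard $\DP\ge0$ and apply Cauchy--Schwarz to obtain, with $X=\|\hat c_n\|_\bbL$, $Y=\|\vn\|_\bbL$ and $c=\kappa\|\com\|_\bbL$,
\[
X^2-cX\le Y^2+cY\quad\Longleftrightarrow\quad (X-Y)(X+Y)\le c(X+Y),
\]
hence $X\le Y+c$. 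Since $c=\kappa\|\com\|_\bbL\asymp\rho_n$ by the tuning of Proposition~\ref{prop:up}, this gives $\|\hat c_n\|_\bbL\le\|\vn\|_\bbL+O(\rho_n)$ pathwise, so $\E[\|\hat c_n\|_\bbL]\lesssim\rho_n$ and the argument closes. (The paper presents this step as ``choosing $\kappa$'' post~hoc in the quadratic, which is slightly confusing since $\hat\mu_n$ depends on $\kappa$; the factorization above is the clean route.)
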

The proof of this important result is deferred to Section \ref{s:proof_main}.

\begin{remark}
 It can be shown   in   specific situations (see, \textit{e.g.},  $iv)$ of Theorem \ref{thm:sol_discrete}) that the solution of~\eqref{eq:blasso} is indeed a discrete measure that can be written as
$$ \hat\mu_n = \sum_{t \in \hat{S}} \hat a_t \delta_{ t}. $$
In such a case, the relevance of the locations $\hat S$ of the reconstructed spikes $\hat a_t$ can be derived from the results of Theorem \ref{thm:main}. A discussion is given in some specific cases in Section \ref{s:rates}.
\end{remark}

\subsection{Support stability for large sample size}
\label{sec:support}
We introduce $\mathcal P_0:=\Phi c_0$ the ‘‘minimal norm certificate'' (see \textit{e.g.}~\cite{Duval_Peyre_JFOCM_15}), which is defined by:
\[
c_0=\arg\min\big\{\|c\|_{\mathds L}^2\ :\ c\in\mathds L\quad \text{s.t.}\quad  \|\Phi c\|_{\infty}\leq 1\text{ and }(\Phi c)(t_k)=1\,,\ k\in[K]\big\}\,,
\]
when it exists.

We say that the support $S^0=\{t_1,\ldots,t_K\}$ of $\mu^0$ satisfies the \textit{Non-Degenerate Bandwidth} condition~\eqref{eq:NDSCB} if there exists $0<q<1$, $r>0$ and $\rho>0$ such that:
\eq
\label{eq:NDSCB}
\mathcal P_0\text{ exists}\,,\quad
\forall t\in\mathds F(r)\,,\  |\mathcal P_0(t)|<1-q\,,\quad
\forall t\in\mathds N(r)\,,\  \nabla^2 \mathcal P_0(t)\prec -\rho\,\mathrm{Id}_d\,.
\tag{NDB}
\qe
We then have the support stability result for large values of $n$.
\begin{theorem}
\label{thm:yoyo}
Let the triple $\lambda,\varphi,\mu^0$ be such that \eqref{eq:NDSCB} holds. Let $r_\kappa\in(0,\frac12)$ and set $\kappa_n=\sqrt{\lambda(0)}\,n^{-r_\kappa}$. Let $\hat \mu_n$ be the BLASSO estimator~\eqref{eq:blasso} 
with a tuning parameter $\kappa=\kappa_n$.

Then for $n$ large enough, and with probability at least $1-Ce^{- n^{\frac{1}2-r_\kappa}}$ for a universal constant $C>0$, it holds that $\hat\mu_n$ has $K$ spikes with exactly one spike~$\hat t_k$ in each region $\mathds N_k(r)$. These spikes converge to the true ones, and so do the amplitudes $\hat a_k$, as $n$ tends to infinity.
\end{theorem}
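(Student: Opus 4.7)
The strategy adapts the support-stability analysis of Duval--Peyr\'e~\cite{Duval_Peyre_JFOCM_15} to our Euclidean RKHS setting, combining an implicit function theorem argument on the KKT system with an exponential concentration bound for $\vn = L\hat f_n - L\circ\Phi\mu^0$ in $\bbL$.

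\textbf{Step 1 (Concentration).} By the reproducing property, $\lambda(\cdot - X_i)\in\bbL$ with $\|\lambda(\cdot - X_i)\|_\bbL^2 = \lambda(0)$, so $\vn = \tfrac{1}{n}\sum_i[\lambda(\cdot-X_i)-\E\lambda(\cdot-X_1)]$ is an empirical mean of i.i.d.\ centred bounded $\bbL$-valued variables. Hoeffding's inequality in Hilbert space yields
\[
\mathds{P}\bigl(\|\vn\|_\bbL \geq \tfrac{\kappa_n}{2}\bigr) \leq 2\exp\bigl(-c\,n\kappa_n^2/\lambda(0)\bigr) = 2e^{-c\,n^{1-2r_\kappa}} \leq Ce^{-n^{1/2-r_\kappa}}
\]
for $n$ large, since $1-2r_\kappa>1/2-r_\kappa$ when $r_\kappa<1/2$. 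Let $\mathcal E_n$ denote the complementary event on which we now argue deterministically.

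\textbf{Step 2 (Candidate by implicit function theorem).} Parameterize positive discrete measures through $\mu_{a,t} = \sum_k a_k\delta_{t_k}$ with $(a,t)\in\bR_+^K\times(\bR^d)^K$, and associate the dual polynomial $\eta_{a,t}(x) = \tfrac{1}{\kappa_n}\Phi\bigl(\vn + L\circ\Phi(\mu^0-\mu_{a,t})\bigr)(x)$. By~\eqref{e:dual_poly} and item~$iii)$ of Theorem~\ref{thm:sol_discrete}, $\mu_{a,t}$ is a BLASSO minimiser whenever $\eta_{a,t}(t_k)=1$, $\nabla\eta_{a,t}(t_k)=0$ for all $k\in[K]$, and $|\eta_{a,t}|<1$ elsewhere. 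Denote $F(a,t;\vn,\kappa_n) := (\eta_{a,t}(t_k)-1,\ \nabla\eta_{a,t}(t_k))_{k\in[K]}$; the limiting regime $(\vn,\kappa_n)\to 0$ with $\|\vn\|_\bbL/\kappa_n\to 0$ degenerates (after rescaling) into the minimal-norm certificate equations $\mathcal P_0(t_k)=1$, $\nabla\mathcal P_0(t_k)=0$ solved by $(a^0,t^0)$. The Jacobian of $F$ at this point is block-structured: its $(t,t)$-diagonal blocks are $\nabla^2\mathcal P_0(t_k)\prec-\rho\,\mathrm{Id}_d$ by~\eqref{eq:NDSCB}, and its $(a,a)$-block is the Gram matrix of $\{\Phi\delta_{t_k}\}_{k\in[K]}$ in $\bbL$, positive definite since the $t_k$ are pairwise distinct with sufficient spectral support (the very property exploited to build $\Pm$ in Theorem~\ref{theo:main_certificate}). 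The implicit function theorem then produces, on $\mathcal E_n$ for $n$ large, $(\hat a,\hat t)$ with $F(\hat a,\hat t;\vn,\kappa_n)=0$ and $\|(\hat a,\hat t)-(a^0,t^0)\|\lesssim\|\vn\|_\bbL/\kappa_n+\kappa_n\to 0$; in particular $\hat t_k\in\mathds{N}_k(r)$, $\hat a_k>0$, and $(\hat a_k,\hat t_k)\to(a^0_k,t_k)$.

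\textbf{Step 3 (Global optimality, uniqueness).} To promote $\mu_{\hat a,\hat t}$ to \emph{the} BLASSO minimiser with $\hat K=K$, we verify $|\eta_{\hat a,\hat t}|<1$ off $\{\hat t_1,\ldots,\hat t_K\}$. On the far region $\mathds{F}(r)$, the bound $|\mathcal P_0|<1-q$ from~\eqref{eq:NDSCB} combined with the uniform closeness $\|\eta_{\hat a,\hat t}-\mathcal P_0\|_\infty\to 0$ yields $|\eta_{\hat a,\hat t}|<1-q/2$. On each $\mathds{N}_k(r)$, second-order perturbation transfers the strict concavity $\nabla^2\mathcal P_0\prec-\rho\,\mathrm{Id}_d$ to $\eta_{\hat a,\hat t}$, making $\hat t_k$ its unique interior maximiser of value $1$, hence $|\eta_{\hat a,\hat t}|<1$ on $\mathds{N}_k(r)\setminus\{\hat t_k\}$. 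Item~$iii)$ of Theorem~\ref{thm:sol_discrete} then forces the support of any BLASSO solution to lie inside $\{\hat t_1,\ldots,\hat t_K\}$, and the amplitudes are uniquely pinned down by the invertible Gram system of Step~2, giving uniqueness of $\hat\mu_n$ with the claimed $K$ spikes and the convergence $(\hat a_k,\hat t_k)\to(a_k^0,t_k)$.

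\textbf{Main obstacle.} The delicate point is the \emph{global} $L^\infty$ control $\|\eta_{\hat a,\hat t}-\mathcal P_0\|_\infty\to 0$ on the unbounded domain $\bR^d$: local Taylor estimates alone do not suffice, since the far region $\mathds{F}(r)$ is unbounded. This will require $\Phi$ to send bounded subsets of $\bbL$ into equi-continuous families vanishing at infinity, which we expect to extract from the spectral smoothness of $\varphi$ (Assumption~\eqref{eq:H0}) combined with integrability of $\lambda$, ensuring the relevant convolutions lie in $\mathcal{C}_0(\bR^d,\bR)$. A parallel subtlety arises for the noise term $\Phi\vn/\kappa_n$, which must be controlled uniformly at infinity, and not merely in $\bbL$-norm as in Step~1.
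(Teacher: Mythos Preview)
Your concentration argument in Step~1 is fine and in fact simpler than the paper's, which decomposes $\|\vn\|_\bbL^2$ as a diagonal term plus a degenerate $U$-statistic and invokes Arcones--Gin\'e; Hoeffding in Hilbert space gives the same exponent more directly.

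Step~2, however, has a genuine gap. The map $F$ you wrote carries a $1/\kappa_n$ factor, so the limit $(\vn,\kappa_n)\to(0,0)$ is singular and the implicit function theorem cannot be applied there without first clearing $\kappa_n$. Once you pass to the non-rescaled system $\kappa_nF$---equivalently, the first-order conditions $[\Phi(L\hat f_n-L\circ\Phi\mu_{a,t})](t_k)=\kappa_n$ and $\nabla_x[\ldots](t_k)=0$---the base point $(a^0,t^0,0,0)$ is a solution, but the Jacobian in $(a,t)$ is \emph{not} what you claim: its $(t,t)$-block is (up to the weights $a^0_k$) the Gram matrix of $\{\partial_j L\circ\Phi\delta_{t_k}\}$ in $\bbL$, not $\nabla^2\mathcal P_0(t_k)$. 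The Hessian of $\mathcal P_0$ governs the curvature of the \emph{dual polynomial}, which is the object of Step~3, not the IFT Jacobian. In the Duval--Peyr\'e scheme these are two separate ingredients (injectivity of $\Gamma_{t^0}$ for IFT, non-degeneracy of $\eta_0$ for global control), and you have merged them incorrectly.

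The paper takes a different and more economical route that sidesteps both this confusion and your ``main obstacle''. It works purely on the dual side: the dual programs~$(\mathbf D_\kappa(\Phi\mu^0))$ and~$(\mathbf D_\kappa(\hat f_n))$ are projections onto the closed convex set $\{c:\|\Phi c\|_\infty\leq1\}$, so $\|c_{\kappa,n}-c_\kappa\|_\bbL\leq\|\vn\|_\bbL/\kappa$ by non-expansiveness; a separate weak-compactness argument gives $\|c_\kappa-c_0\|_\bbL\to0$. The uniform control on $\bR^d$ you flag as delicate then falls out for free from the RKHS structure via Cauchy--Schwarz: $|\Phi c(t)|=|\langle c,L\circ\Phi\delta_t\rangle_\bbL|\leq\|c\|_\bbL\cdot\text{const}$, and similarly for derivatives, so $\bbL$-convergence of $c_{\kappa,n}$ automatically yields uniform $C^2$-convergence of $\mathcal P_{\kappa,n}$ to $\mathcal P_0$. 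The~\eqref{eq:NDSCB} bounds then transfer directly to $\mathcal P_{\kappa,n}$, forcing $\mathrm{Supp}(\hat\mu_n)\subset\{|\mathcal P_{\kappa,n}|=1\}$ to have at most one point per $\mathds N_k(r)$; weak-$\star$ convergence $\hat\mu_n\to\mu^0$ supplies at least one. No implicit function theorem is needed.
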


\noindent
The proof can be found in Appendix \ref{s:stability}. 
We emphasize that $C$ is independent from the dimension $d$, from the RKHS used $\bbL$ or the location of the spikes for example.

\begin{remark}
In Theorem~\ref{thm:yoyo}, note that the data fidelity kernel $\lambda$ is fixed but in practice, the bandwidth of $\lambda$ often depends on the sample size $n$. Theorem~\ref{thm:yoyo} suggests the heuristics that the data fidelity kernel $\lambda=\lambda_n$ may depend on $n$ and it might be such that $\kappa_n=\sqrt{\lambda_n(0)}\,n^{-r_\kappa}$ vanishes as $n$ tends to infinity.
\end{remark}

\begin{remark}
Assume that the mixing kernel $\varphi$ is such that $\varphi\star\lambda=\psi_m\star\lambda$ where $\psi_m$ is defined by \eqref{eq:psim_init}, assume that the data fidelity kernel is such that $\lambda=\lambda_{1/(4m)}$ and assume that $\displaystyle m \gtrsim  \sqrt{K}d^{3/2} \Delta_+^{-1} $. Then our certificate $\Pm$ is called the {\bf vanishing derivatives pre-certificate} by~\cite[Section 4, Page 1335]{Duval_Peyre_JFOCM_15}. According to Theorem~\ref{theo:main_certificate}, we know that $\|\Pm\|_\infty\leq 1$. In this case, vanishing derivatives pre-certificate and certificate of minimal norm coincide so that $\Pm$ is the minimal norm certificate $\mathcal P_0$ appearing in~\eqref{eq:NDSCB}, and Theorem~\ref{theo:main_certificate} shows that~\eqref{eq:NDSCB} holds.
\end{remark}

\section{Rates of convergence for some usual mixture models}
\label{s:rates}

\subsection{Frequency cut-off and $\sinc$ kernel}
\label{sec:sinc}
In this section, we describe the consequences of Theorem \ref{thm:main}
for some  mixture models with classical densities $\varphi$.
For this purpose, we will consider the sinus-cardinal kernel $\sinc$ with a frequency cut-off $1/\tauc$, which is introduced in Example \ref{example}.
As a band-limited function $\lambda_\tauc$, we have that
$$
\|t\|_{\infty} \geq \frac{1}{\tauc} \Longrightarrow
\cF[\lambda_\tauc](t) = 0.
$$
Hence, to obtain a tractable version of Theorem \ref{thm:main} with $\mathcal{C}_m(\varphi,\lambda) < + \infty$ {(see Equation~\eqref{eq:fourre_tout})} we are led to consider $\tauc$ such that
\eq
\label{e:4m}
\frac{1}{\tauc} = 4 m.
\qe
In that case, $\cF[\lambda_\tauc]$ is a constant function over its support and the term $\mathcal{C}_{m}(\varphi,\lambda_\tauc) $ involved in Proposition \ref{prop:up} and Theorem \ref{thm:main} appears to be equal to 
$$
\mathcal{C}_{m}(\varphi,\lambda_\tauc) = \frac{K^2 m^{-d/2} 2^{d/2}}{ \inf_{\|t\|_{\infty} \leq 4 m} \sigma(t)}\,.
$$
To make use of Theorem \ref{thm:main}, we also need an explicit expression of  $(\rho_n)_{n\in \mathds{N}^*}$, which itself strongly depends on the kernel $\lambda_{\tauc}$.  In this context, some straightforward and standard computations yield
\begin{eqnarray*}
 \E\left[\| \vn \|_\bbL^2\right]
& = &\E \left[\| L\hat f_n - L f^0 \|_\bbL^2 \right], \\
& = & \E \left[\int_{\| t \|_\infty \leq 1/\tauc} \left| \mathcal{F}[\hat f_n](t) - \mathcal{F}[f^0](t) \right|^2dt \right], \\
& = & \int_{\| t \|_\infty \leq 1/\tauc} \mathrm{Var}(\mathcal{F}[\hat f_n](t)) \dd t \leq \frac{1}{n \tauc^d}.
\end{eqnarray*}
This provides a natural choice for the sequence $(\rho_n)_{n\in \mathds{N}^*}$ as
$$ \forall n\in \mathds{N}^* \qquad \rho_n = \frac{1}{\sqrt{n \tauc^d}}  = \frac{2^{d} m^{d/2}}{\sqrt{n}}.$$
Therefore, the statistical rate obtained in Theorem \ref{thm:main}
satisfies
\begin{equation}\label{eq:rhoncmphi}
\rho_n \mathcal{C}_m(\varphi,\lambda_\tauc) \leq \frac{K^2 2^{3d/2}}{\sqrt{n}  \displaystyle \times \inf_{\|t\|_{\infty} \leq 4 m}\sigma(t) }.
\end{equation}
We should understand the previous inequality as an upper bound that translates a tradeoff between the sharpness of the window where spikes are located (given by {$\epsilon = \mathcal{O} (1/(m d))$} in~\eqref{eq:cmphi}) and the associated 
statistical ability to recover a such targeted accuracy (given by the bound $\rho_n \mathcal{C}_m(\varphi,\lambda_{\tauc})$ on the Bregman divergence).
A careful inspection of the previous tradeoff leads to the following conclusion: the window size $\epsilon$ is improved for large values of $m$ but the statistical variability is then degraded according to the decrease rate of the Fourier transform~$\sigma$ of $\varphi$, which typically translates an inverse problem phenomenon.

Finally, we emphasize that the dimensionality effect is not only involved in the term $2^{3d/2}$ of Equation~\eqref{eq:rhoncmphi} but is also hidden in the constraint 
$$
m \gtrsim  \sqrt{K} d^{3/2}\Delta_+^{-1},
$$
used to build our dual certificate in Theorem \ref{theo:main_certificate}. By the way, we stress that at the end, the only tuning parameter involved in (\ref{eq:blasso}) appears to be $m$. \\

We now focus our attention to some specific and classical examples in mixture models: 

\begin{itemize}
\item the case of severely ill-posed inverse problems with an exponential decrease of the Fourier transform for large frequencies, which corresponds to super-smooth distributions. We emphasize that this class contains the standard benchmark of the Gaussian case, which will be discussed in details.
\item  the case of \textit{mildly ill-posed inverse problems} which encompasses multivariate Laplace distributions, Gamma distributions, double exponentials among others.
\end{itemize}

\subsection{Super-smooth mixture deconvolution and Gaussian case}
\label{sec:super_smooth} 
\subsubsection{Description of the distributions}
We consider in this paragraph the statistically hard situation of the general family of mixing distribution $\varphi$ with an exponential decrease of the Fourier transform. More precisely, we assume that the spectral density $\sigma$ of $\varphi$ satisfies:
\eq
\tag{$\cH^{supersmooth}_{\alpha,\beta}$}
\exists j \in \mathds{N}^\star \quad s.t. \quad \cF[\varphi](t)= \sigma(t) =  e^{-\alpha \|t\|_{j}^\beta}\quad \forall t\in\mathds R^d, \alpha>0, \beta>0.
\qe
where for any $j\in \mathds{N}^\star$, $\|.\|_j$ denotes the $\ell^j$-norm.  The assumption ($\cH^{supersmooth}_{\alpha,\beta}$) includes obviously the Gaussian distribution but also many other distributions as suggested by the list of examples displayed below (among others). 

\paragraph{$\bullet$ The multivariate Cauchy distribution}For a dispersion parameter $\alpha$,  $\varphi$ is defined by:
$$ \varphi(x) = \frac{\Gamma(\frac{d+1}{2})}{\Gamma(\frac{1}{2}) \pi^{\frac{d}{2}} \sqrt{\alpha} \lbrace 1+ \alpha^{-1} \| x \|_2^2  \rbrace^{\frac{d+1}{2}}} \quad \forall x\in \mathds{R}^d \qquad \text{and} \qquad \sigma(t) = e^{ - \sqrt{\alpha} \| t\|_2}, \quad \forall t\in \mathds{R}^d.$$

\paragraph{$\bullet$ The tensor product of univariate Cauchy distribution} An alternative example is:
$$ \varphi(x) = \frac{1}{\pi^d} \prod_{j=1}^d \left( \frac{\alpha}{x_j^2 +\alpha^2}\right) \quad \forall x= (x_1 \dots x_d)^T \in \mathds{R}^d \quad \text{and} \quad \sigma(t) = e^{-\alpha \| t\|_1}, \quad \forall t\in \mathds{R}^d.$$

\paragraph{$\bullet$ The multivariate  Gaussian distribution} A standard benchmark study of the Gaussian law:
 $$ \varphi: x \longmapsto (2\pi)^{-d/2} e^{-\|x\|^2/2} \qquad \text{and} \qquad  \sigma(t) = e^{-\frac{\|t\|_2^2}{2}}, \quad \forall t\in \bR^d.$$

\subsubsection{General recovery result}
In the situations covered by assumption ($\cH^{supersmooth}_{\alpha,\beta}$), we shall observe that $\|t\|_j \leq d^{1/j} \|t\|_{\infty}$ and we verify that:

$$
\inf_{\|t\|_{\infty} \leq 4 m } \sigma(t) = e^{-\alpha (4  d^{1/j} m)^\beta}.
$$
In that case, we obtain that 
$$
\rho_n \mathcal{C}_m(\varphi,\lambda_{\tau}) \lesssim K^2 2^{3d/2} \times \frac{e^{\alpha (4 d^{1/j} m)^\beta}}{\sqrt{n}}.
$$
A straightforward application of Theorem \ref{thm:main} leads to the following result.

\begin{prop}
\label{prop:supersmooth}
Assume that $\varphi$ satisfies  $(\cH^{supersmooth}_{\alpha,\beta})$. Let $m \gtrsim \sqrt{K} d^{3/2} \Delta_+^{-1}$. 
Let $\hat \mu_n$ be the BLASSO estimator given by~\eqref{eq:blasso} with $\kappa=\kappa_n$ chosen as in Proposition \ref{prop:up}, then up to some universal constants (independent from $n,d,K$ and $m$):
\begin{itemize}
\item[$i)$] Far region and negative part: if $\epsilon = \mathcal{O}\left( \frac{1}{m d}\right)$, then:
$$
\E\left[ \hat\mu_n^-(\bR^d)\right] \lesssim K^2 2^{3d/2}  \times \frac{e^{\alpha (4 d^{1/j} m)^\beta}}{\sqrt{n}} 
\quad \text{and} \quad 
\E\left[\hat\mu_n^+(\bbF(\epsilon))\right] \lesssim K^2 d^3 2^{3d/2} \times \frac{e^{\alpha (4 d^{1/j} m)^\beta}}{\sqrt{n}}.
$$
\item[$ii)$] Near region (spike detection): a couple of constants $(c,\mathcal{C})$ exists such that
$$\forall A \subset \bR^d, \quad  \E[\hat\mu_n^+(A)] > c \times d^3 2^{3d/2} K^2 \times \frac{e^{\alpha (4 d^{1/j} m)^\beta}}{\sqrt{n}} \quad\Longrightarrow \quad \min_{k\in[K]} \inf_{t\in A}\| t-t_k\|_2^2 \leq \frac{1}{\mathcal{C} d^3 m^2}.  $$
\item[$iii)$] Near region (weight reconstruction): for any $k \in [K]$:
$$
\E \left[ |a_k^0-\hat{\mu}_n(\bbN_k(\epsilon))|\right] \lesssim  2^{3d/2} K^2 \times \frac{e^{\alpha (4 d^{1/j} m)^\beta}}{\sqrt{n}}. $$
\end{itemize}
\end{prop}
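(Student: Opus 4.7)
The proof is a direct specialization of Theorem~\ref{thm:main} obtained by choosing the data-fidelity kernel $\lambda = \lambda_\tauc$ to be the $\sinc$ kernel with frequency cut-off $\tauc = 1/(4m)$, as already set up in Section~\ref{sec:sinc}. The plan is to verify the two quantitative ingredients required by Theorem~\ref{thm:main} ($\rho_n$ and $\mathcal{C}_m(\varphi,\lambda_\tauc)$), plug them in, and absorb the universal constants.

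First, the choice $\tauc = 1/(4m)$ matches the support condition $\mathrm{Supp}(\cF[\Pm]) \subset [-4m,4m]^d$ provided by item $iii)$ of Theorem~\ref{theo:main_certificate}, so $\com \in \bbL_\tauc$ and Proposition~\ref{prop:up} applies. On this band-limited RKHS, $\cF[\lambda_\tauc]$ is constant on its support, so the expression recalled in Section~\ref{sec:sinc} gives
\[
\mathcal{C}_m(\varphi,\lambda_\tauc) \lesssim \frac{K^2 2^{d/2} m^{-d/2}}{\displaystyle\inf_{\|t\|_\infty \leq 4m}\sigma(t)}.
\]
Moreover, the standard Fourier--Plancherel variance bound for $\vn$ on band-limited frequencies (computed in Section~\ref{sec:sinc}) yields $\rho_n \lesssim 2^{d} m^{d/2}/\sqrt{n}$.

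Next, under $(\cH^{supersmooth}_{\alpha,\beta})$ and using the elementary norm comparison $\|t\|_j \leq d^{1/j}\|t\|_\infty$, the spectral density $\sigma(t) = e^{-\alpha\|t\|_j^\beta}$ is monotone in each coordinate, so its infimum over the cube $[-4m,4m]^d$ is attained at any corner and equals $e^{-\alpha(4 d^{1/j} m)^\beta}$. Combining the two ingredients provides
\[
\rho_n\, \mathcal{C}_m(\varphi,\lambda_\tauc) \;\lesssim\; K^2\, 2^{3d/2}\,\frac{e^{\alpha(4 d^{1/j} m)^\beta}}{\sqrt{n}}.
\]

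Finally, substituting this estimate into the three assertions of Theorem~\ref{thm:main} yields items $i), ii), iii)$ of Proposition~\ref{prop:supersmooth}: the bounds on $\E[\hat\mu_n^-(\bR^d)]$ and $\E[\hat\mu_n^+(\bbF(\epsilon))]$ follow verbatim, the spike-detection threshold inherits the extra factor $d^3$ present in $ii)$ of Theorem~\ref{thm:main}, and the weight-reconstruction bound in $iii)$ follows directly; the universal constants $\etaa, \upsilon$ arising from the dual certificate construction are absorbed into the implicit multiplicative constants. The only mildly subtle step is identifying the infimum of $\sigma$ over the cube, which is immediate under $(\cH^{supersmooth}_{\alpha,\beta})$ but is the place where the geometry of the $\ell^j$-level sets enters the rate; no genuine new technical obstacle appears beyond the material already used in Theorem~\ref{thm:main}.
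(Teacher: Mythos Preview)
Your proof is correct and follows exactly the approach of the paper: specialize Theorem~\ref{thm:main} with the $\sinc$ kernel $\lambda_{1/(4m)}$, use the bounds $\rho_n \lesssim 2^d m^{d/2}/\sqrt n$ and $\mathcal C_m(\varphi,\lambda_\tauc)\lesssim K^2 2^{d/2} m^{-d/2}/\inf_{\|t\|_\infty\le 4m}\sigma(t)$ from Section~\ref{sec:sinc}, and evaluate the infimum of $\sigma$ via the norm comparison $\|t\|_j\le d^{1/j}\|t\|_\infty$. The paper likewise states only that the result is a straightforward application of Theorem~\ref{thm:main} after those same preliminary computations.
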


According to the results displayed in Proposition \ref{prop:supersmooth}, our estimation procedure $\hat \mu_n$ leads to a consistent estimation as soon as $m$ is chosen as
$$
m = \left( \frac{\delta \log n}{\alpha}\right)^{1/\beta} \frac{1}{4 d^{1/j}} \quad \mathrm{with} \quad \delta \in \left]0,\frac{1}{2} \right[.
$$

In such a case, 
$$ \max \left( \E\left[ \hat\mu_n^-(\bR^d)\right] \ , \  \E\left[\hat\mu_n^+(\bbF(\epsilon))\right] \ , \  \E \left[ |a_k^0-\hat{\mu}_n(\bbN_k(\epsilon))|\right] \right) \lesssim n^{-\frac{1}{2} +\delta}, $$
and every set $A$ such that  $\E[\hat\mu_n^+(A)] \gtrsim n^{-\frac{1}{2} +\delta}$ is at least at a logarithmic distance ($\mathcal{O}(m^{-2})$) of a true spike.  

We observe that as it is commonly observed in severely-ill conditioned inverse problems, we can  expect only logarithmic rates of convergence. This logarithmic limitation in the super-smooth situation  has been intensively discussed in the literature and we refer among others to~\cite{Fan}.  To make the situation more explicit, we illustrate it in the  Gaussian mixture model.

\subsubsection{Multivariate Gaussian mixtures}
As a specific case of super-smooth distribution with $\beta=j=2$ and $\alpha=1/2$, Proposition \ref{prop:supersmooth} holds and we obtain that if $m \gtrsim  \sqrt{K} d^{3/2}\Delta_+^{-1} $ and if $\epsilon = \mathcal{O}( \frac{1}{m d})$, then the weights of the far region and of the negative parts are upper bounded by:
\begin{equation}\label{eq:negative_far}
\E\left[ \hat\mu_n^-(\bR^d)\right] \lesssim  K^2 2^{3d/2}  \times\frac{e^{8 d m^2}}{\sqrt{n}} \quad \text{and} \quad 
\E\left[\hat\mu_n^+(\bbF(\epsilon))\right] \lesssim    K^2 d^3 2^{3d/2}  \times \frac{e^{8dm^2}}{\sqrt{n}} .
\end{equation}
Similarly,  a couple of constants $(c,\mathcal{C})$ exists such that:
\begin{equation}
\label{eq:near}
\forall A \subset \bR^d, \quad  \E[\hat\mu_n^+(A)] > c   d^3 2^{3d/2} K^2 \times \frac{e^{8dm^2}}{\sqrt{n}} \quad\Longrightarrow \quad \min_{k\in[K]} \inf_{t\in A}\| t-t_k\|_2^2 \leq \frac{1}{\mathcal{C} d^3 m^2},
\end{equation}
whereas the weights recovery is ensured by the following inequality:
 for any $k \in [K]$:
$$
\E \left[ |a_k^0-\hat{\mu}_n(\bbN_k(\epsilon))|\right] \lesssim  2^{3d/2} K^2 \times \frac{e^{8dm^2}}{\sqrt{n}}. $$
\begin{itemize}
\item{\textbf{Quantitative considerations}}
When the dimension $d$ is kept fixed (as the number of components $K$ and the minimal value for the spacings between the spikes $\Delta$), the statistical ability of the BLASSO estimator $\hat \mu_n$ is driven by the term $  e^{8dm^2} / \sqrt{n}$. In particular, this sequence converges to $0$ provided that the following condition holds:
\begin{equation} 
e^{8dm^2} \ll \sqrt{n} \quad \mathrm{i.e.} \quad m = \mathcal{O}\left( \sqrt{\frac{\log(n)}{d}}\right) \quad \mathrm{and} \quad m \longrightarrow + \infty \quad \mathrm{as} \ n\longrightarrow + \infty.
\label{eq:condgauss}
\end{equation}
In other words, the maximal admissible value for $m$ is $\sqrt{\frac{\log(n)}{16d}}$.  
In particular, if we consider $m = \sqrt{\frac{ \delta}{16} \frac{\log(n)}{d}}$ for $ \delta$ small enough, we  observe that 
$$
\E\left[ \hat\mu_n^-(\bR^d)\right] + \E\left[\hat\mu_n^+(\bbF(\epsilon_n))\right]  \lesssim  \sqrt{n}^{\delta-1}.
$$
The counterpart of this admissible size for $m$ is a slow rate for $\epsilon_n$:
 $$\epsilon_n = \mathcal{O}\left(\frac{1}{md} \right)= \frac{ \delta^{-1/2}}{\sqrt{d \log n}},$$
Said differently, the size of the near regions recovered with an almost parametric rate $n^{-1/2}$ are of the order $(d \log(n))^{-1/2}$. \\
\item{\textbf{Nature of the results}}
Item $i)$ of Proposition \ref{prop:supersmooth} and Equation~\eqref{eq:negative_far} both indicate that the mass set by $\hat \mu_n$  on the negative part and on the far region tends to $0$ as the sample size $n$ grows under Condition (\ref{eq:condgauss}). Our estimator is consistent: the mass allowed on the near region will be close to $1$ as soon as $n$ is large enough. At this step, we stress that the parameter $m$ plays the role of an accuracy index: if $m$ is constant, the mass of the near region converges to $1$ at a parametric rate... but this near region is in this case not really informative. On the opposite hand, if $m$ is close to the limit admissible value expressed in (\ref{eq:condgauss}), Item $ii)$ of Proposition \ref{prop:supersmooth} and Equation~\eqref{eq:near} translate the fact that
the near region is  close to the support of the measure $\mu^0$ but the convergence of the associated mass will be quite slow. \\
\item{\textbf{Case of dimension 1 and number of spikes detection}}
According to Item $ii)$ of Proposition \ref{prop:supersmooth} and Equation~\eqref{eq:near}, any set with a sufficiently large mass is close to a true spike $(a_k^0,t_k)$ for some $k\in [K]$.  
We stress that in the specific situation where $d=1$, $\hat \mu_n$ is a discrete measure (see Theorem~\ref{thm:sol_discrete}), namely
$$ \hat \mu_n = \sum_{\hat{t} \in \hat{S}}  \hat a_{\hat{t}} \delta_{\hat{t}}. $$
In such a case, we get from Proposition \ref{prop:supersmooth} that if a reconstructed spike $(\hat a_{\hat{t}},\hat{t})$ is large enough, it is in some sense close to a true spike. 
More formally, if $m = \mathcal{O}( \sqrt{ \delta \log(n)})$ and $\hat{t} \in \hat{S}$, then
$$ \hat a_{\hat{t}}\gtrsim K^2 n^{-1/2+ \delta} \Longrightarrow  \inf_{k\in [K]} |\hat{t} - t_k | \lesssim \frac{1}{\sqrt{ \delta \log(n)}}.$$
In particular, the BLASSO estimator $\hat \mu_n$ provides a lower bound on the number of true spikes. Once again, the value of $m$ is critical in such a case. In particular, according to~\eqref{eq:condgauss}, we cannot expect more than a logarithmic precision. \\
\item{\textbf{Importance of the mixture parameters}} 
It is also interesting to pay attention to the effect of the number of components $K$, the size of the minimal spacing $\Delta$ and of the dimension $d$ on the statistical accuracy of our method.
 In the Gaussian case, the rate is of the order $K^2 C^d e^{8 d m^2} n^{-1/2}$ but an important effect is hidden in the constraint brought by  Theorem \ref{theo:main_certificate}:
$$
m \gtrsim  \sqrt{K} d^{3/2} \Delta_+^{-1}.
$$

In particular, the behavior of our estimator  is seriously damaged in the Gaussian situation when $(\Delta^{-1} \vee K \vee d) \rightarrow + \infty$ since in that case, taking the minimal value of $m$ satisfying the previous contraint, we obtain a rate of the order 

$$
e^{d^{4} K \Delta^{-2}} n^{-1/2}.
$$

We observe that $d$, $K$ and $\Delta^{-1}$ cannot increase faster than a power of $\log(n)$: $d^{4} K \Delta^{-2} \ll \log(n)$.
 We will observe  in Section \ref{sec:ordinary_smooth} that a such hard constraint disappears in more favorable cases with smaller degrees of ill-posedness.\\
\item{\textbf{Position of our result on Gaussian mixture models}}

To conclude this discussion, we would like to recall that the BLASSO  estimator $\hat{\mu}_n$
 depends on $m$. This parameter plays the role of a precision filter and only provides a quantification of the performances of our method. This is one of the main differences with the classical super-resolution theory where in general $m$ is fixed and constrained by the experiment.   We should point out that many works have studied  statistical estimation in Gaussian mixture models with a semi-parametric point of view (see, \textit{e.g.}~\cite{vanderVaartmix},~\cite{BMV_2006}). 
  These investigations are often reduced to the   two-component case (K=2): we refer to~\cite{BV_2014},~\cite{KH15} or~\cite{Gadat_Kahn_Marteau_Maugis}  among others. The general case ($K \in \mathds{N}^*$) has been for instance addressed in~\cite{Cathy} using a model selection point of view: the selection of $K$ is achieved through the minimization of a criterion penalized by the number of components. We also  refer to~\cite{spade} where a Lasso-type estimator is built for mixture model using a discretization of the possible values of $t_k$.  However, this last approach is limited by some constraints on the Gram matrix involved in the model that do not allow to consider situations where $\Delta$ is small: in~\cite{spade}, the minimal separation between two spikes has to satisfy $\Delta \geq \Delta_0 >0$, \textit{i.e.} has to be lower bounded by a positive constant $\Delta_0$, which depends on the mixing distribution $\varphi$.
  We emphasize that in our work, we only need an upper bound on $K$ and a lower bound on $\Delta$ or at least to assume that these quantities are fixed w.r.t. $n$. According to Proposition \ref{prop:supersmooth}, our constraint  expressed on these parameters already allows to cover a large number of interesting situations. 
\end{itemize}

\subsection{Ordinary smooth distributions}\label{sec:ordinary_smooth}
\paragraph{General result} Ordinary smooth distributions investigated in this section are described through a polynomial decrease of their Fourier transform. The corresponding deconvolution problem is then said to be mildly ill-posed. In this section, we assume that the density $\varphi$ satisfies
\eq
\label{eq:Hsmooth}
\tag{$\cH^{smooth}_\beta$}
\cF[\varphi]= \sigma \quad \text{and} \quad
\|x\|_{2}^{-\beta}  \lesssim \sigma(x) \lesssim \|x\|_{2}^{-\beta} \quad \text{when} \quad \|x\|_2 \rightarrow + \infty.
\qe

We refer to~\cite{Fan} and the references therein for an extended description of the class of distributions involved by $(\cH^{smooth}_\beta)$ and some statistical consequences in the situation of standard non-parametric deconvolution (see also the end of this section for two examples).
For our purpose, it is straightforward to verify that
$$
\inf_{\|t\|_{\infty} \leq 4 m } \sigma(t) \leq \inf_{\|t\|_{2} \leq 4 m \sqrt{d} } \sigma(t) \lesssim [\sqrt{d} m]^{-\beta}.
$$
In that case, we obtain that 
$$
\rho_n \mathcal{C}_m(\varphi,\lambda_{\tau}) \lesssim \frac{K^2 2^{3d/2} m^{\beta} d^{\beta/2}}{\sqrt{n}}.
$$
We then deduce the following result (which is a direct application of Theorem \ref{thm:main}).
\begin{prop}
\label{prop:ordinary}
Assume that $\varphi$ is ordinary smooth and satisfies  $(\cH^{smooth}_\beta)$. Consider $m \gtrsim  \sqrt{K} d^{3/2} \Delta_+^{-1}$. 
Let $\hat \mu_n$ be the BLASSO estimator given by~\eqref{eq:blasso} with $\kappa=\kappa_n$ chosen as in Proposition \ref{prop:up}, then up to universal constants (independent from $n,d,K$ and $m$):
\begin{itemize}
\item[$i)$] Far region and negative part: if $\epsilon = \mathcal{O}\left(\frac{1}{m d}\right)$, then:
$$
\E\left[ \hat\mu_n^-(\bR^d)\right] \lesssim  K^2 2^{3d/2}d^{\beta/2} \times  \frac{m^\beta }{\sqrt{n}} \quad \text{and} \quad 
\E\left[\hat\mu_n^+(\bbF(\epsilon))\right] \lesssim K^2 2^{3d/2} d^{3+\beta/2}  \times \frac{ m^\beta }{\sqrt{n}}.
$$

\item[$ii)$] Near region (spike detection): a couple of constants $(c,\mathcal{C})$ exists such that
$$\forall A \subset \bR^d, \quad  \E[\hat\mu_n^+(A)] > c\,K^2 2^{3d/2} d^{3+\beta/2} \times \frac{m^\beta}{\sqrt{n}} \quad\Longrightarrow \quad \min_{k\in[K]} \inf_{t\in A}\| t-t_k\|_2^2 \leq \frac{1}{\mathcal{C} d^3 m^2}.  $$

\item[$iii)$] Near region (weight reconstruction): for any $k \in [K]$:
$$
\E \left[ |a_k^0-\hat{\mu}_n(\bbN_k(\epsilon))|\right] \lesssim  K^2 2^{3d/2} d^{\beta/2}   \times \frac{m^\beta}{\sqrt{n}}. $$
\end{itemize}
\end{prop}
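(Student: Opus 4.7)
The proof is a direct specialization of Theorem \ref{thm:main} to the ordinary smooth setting, using the sinus-cardinal kernel framework set up in Section \ref{sec:sinc}. The strategy is to compute explicitly the two quantities $\rho_n$ and $\mathcal{C}_m(\varphi,\lambda_\tauc)$ that appear in the general bound, and then substitute them into items $i)$, $ii)$, $iii)$ of Theorem \ref{thm:main}.

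First, I would select the data fidelity kernel to be the sinc kernel $\lambda_\tauc$ with frequency cut-off $1/\tauc = 4m$ as in Equation~\eqref{e:4m}, which matches the Fourier support of the certificate $\Pm$. From Section \ref{sec:sinc}, a direct variance computation on the empirical Fourier coefficients gives $\E[\|\vn\|_\bbL^2] \leq 1/(n\tauc^d) = (4m)^d/n$, so one may take $\rho_n \lesssim 2^d m^{d/2}/\sqrt{n}$. Since $\cF[\lambda_\tauc]$ is constant on its support, the definition of $\mathcal{C}_m(\varphi,\lambda)$ in Proposition~\ref{prop:up} simplifies to $\mathcal{C}_m(\varphi,\lambda_\tauc) = K^2 2^{d/2} m^{-d/2}/\inf_{\|t\|_\infty \leq 4m}\sigma(t)$.

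Second, the heart of the argument is the lower bound on $\inf_{\|t\|_\infty \leq 4m} \sigma(t)$ under $(\cH^{smooth}_\beta)$. Using the norm comparison $\|t\|_2 \leq \sqrt{d}\,\|t\|_\infty$, any $t$ in the $\ell_\infty$-cube of radius $4m$ lies in the Euclidean ball of radius $4m\sqrt{d}$. Since $m \gtrsim \sqrt{K}d^{3/2}\Delta_+^{-1}$ forces $m$ to be large enough to enter the tail regime where the polynomial bound $\sigma(t) \gtrsim \|t\|_2^{-\beta}$ of $(\cH^{smooth}_\beta)$ applies, we get
\[
\inf_{\|t\|_\infty \leq 4m} \sigma(t) \;\gtrsim\; (4m\sqrt{d})^{-\beta} \;\gtrsim\; m^{-\beta} d^{-\beta/2}\,.
\]
Combining this with the expressions for $\rho_n$ and $\mathcal{C}_m(\varphi,\lambda_\tauc)$ produces the key rate
\[
\rho_n\,\mathcal{C}_m(\varphi,\lambda_\tauc) \;\lesssim\; \frac{K^2\, 2^{3d/2}\, m^{\beta}\, d^{\beta/2}}{\sqrt{n}}\,.
\]

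Finally, substituting this rate into Theorem \ref{thm:main} yields the three items of the proposition. Items $i)$ and $iii)$ are immediate. For item $ii)$, Theorem \ref{thm:main} $ii)$ provides both the detection threshold and the quantitative localization bound $\min_k \inf_{t\in A}\|t-t_k\|_2^2 \lesssim 1/(d^3 m^2)$ that appears verbatim here. No substantial obstacle is expected; the only bookkeeping is tracking the dimension-dependent constants coming from the universal pair $(\etaa,\upsilon)$ of Theorem \ref{theo:main_certificate}, which are responsible for the extra $d^{3+\beta/2}$ factor in the far region bound of item $i)$ and in the threshold of item $ii)$, as opposed to the cleaner $d^{\beta/2}$ factor of items $i)$ (negative part) and $iii)$.
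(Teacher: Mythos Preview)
Your proposal is correct and follows exactly the approach of the paper: the paper computes $\rho_n\,\mathcal{C}_m(\varphi,\lambda_\tauc) \lesssim K^2 2^{3d/2} d^{\beta/2} m^\beta/\sqrt{n}$ via the same $\ell_\infty$-to-$\ell_2$ norm comparison and the tail bound $\sigma(t)\gtrsim\|t\|_2^{-\beta}$, then explicitly states that the proposition is a direct application of Theorem~\ref{thm:main} and omits the proof. Your additional remark that the constraint $m\gtrsim\sqrt{K}d^{3/2}\Delta_+^{-1}$ ensures $m$ is large enough for the asymptotic regime of $(\cH^{smooth}_\beta)$ to apply is a useful clarification that the paper leaves implicit.
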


The proof of this proposition is omitted, and we only comment on the consequences of this result for ordinary smooth mixtures. Provided $K,d$ and $\Delta$ are bounded (or fixed), we obtain a consistent estimation with the BLASSO estimator $\hat{\mu}_n$ when $m$ is chosen such that
$$m_n = n^{\delta}  \quad \text{with} \quad \delta < \frac{1}{2 \beta}  \quad \text{as} \quad n \rightarrow + \infty.
$$ 
In such a case, $\epsilon_n = \mathcal{O}(d^{-1} n^{-\delta})$.
Now, if $K \vee d \vee \Delta^{-1}$ is allowed to grow towards $+\infty$, setting $m \sim \sqrt{K} d^{3/2} \Delta_+^{-1}$ (the minimal value satisfying the constraint (\ref{eq:variance})) leads to a bound of order
$$
 \max \left( \E\left[ \hat\mu_n^-(\bR^d)\right] \ , \  \E\left[\hat\mu_n^+(\bbF(\epsilon))\right] \ , \  \E \left[ |a_k^0-\hat{\mu}_n(\bbN_k(\epsilon))|\right] \right) \lesssim \  \frac{2^{3d/2} K^{2+\beta/2} \Delta_+^{-\beta} d^{2\beta+3}}{\sqrt{n}}.
$$
In particular, the maximal order for the dimension is $\mathcal{O}(\log(n))$ as $n\rightarrow +\infty$. In the same way, the minimal size of spacings to permit a consistent estimation should  not be smaller than $n^{-1/(2\beta)}$.
In particular, this indicates that a polynomial accuracy is possible  (see e.g. Item $ii)$ of Proposition~\ref{prop:ordinary}). This emphasized the strong role played by the mixture density $\varphi$ in our analysis. We present below two specific examples of ordinary smooth mixture density.

\paragraph{Multivariate Laplace distributions}
In such a case:
$$
\sigma(x) = \frac{2}{2+\|x\|_2^2}.
$$
We obtain here an ordinary smooth density with $\beta=2$.
The minimal spacing for a discoverable spike is therefore of the order $n^{-1/4}$ while the constraint on the dimension is not affected by the value of $\beta$. Concerning the number of components $K$, its value should not exceed $n^{1/6}$ and the smallest size of the window $\epsilon_n$ is $n^{-1/4}$.
\paragraph{Tensor product of Laplace distributions}

Another interesting case is the situation where~$\varphi$ is given by a tensor product of  standard Laplace univariate distributions:
$$\varphi(x) = \frac{1}{2^d} e^{-\sum_{j=1}^d | x_j|} \quad \mathrm{and} \quad  \mathcal{F}[\varphi](x) := \sigma(x) = \prod_{j=1}^d \frac{1}{1+x_j^2} \quad \forall x\in \bR^d.$$
In that case, $\beta = 2d$ and the previous comments   apply:
the maximal value of $m$ is $n^{1/4d}$ with an optimal size of the window of the order $n^{-1/(4d)}$ whereas   $d$ should be at least of order $\mathcal{O}(\log(n))$.

\section{Proof of the Main Results}
\label{sec:proofs}

\subsection{Analysis of the Bregman divergence}
\label{s:propup}

This paragraph is devoted to the statistical analysis of the Bregman divergence whose definition is recalled below:
$$
	\DP(\hat\mu_n,\mu^0) :=  \| \hat\mu_n \|_1 - \| \mu^0\|_1 - \int_{\bR^d} \Pm\dd(\hat\mu_n - \mu^0)\geq0\,.
$$

\begin{proof}[Proof of Proposition \ref{prop:up}]

According to the definition of $\hat \mu_n$ as the minimum of our variational criterion (see Equation~\eqref{eq:muhat}), we know that:
$$
\| L\hat f_n - L\circ\Phi \hat\mu_n \|_\bbL^2 + \kappa \| \hat\mu_n \|_1  \leq \| L\hat f_n - L\circ\Phi \mu^0 \|_\bbL^2 + \kappa \| \mu^0 \|_1.
$$
\underline{Proof of $i)$.}
With our notation $\vn = L \hat f_n - L \circ \Phi \mu^0$ introduced in Section \ref{sec:bregman}, we deduce that:
$$
\| L\hat f_n - L\circ\Phi \hat\mu_n \|_\bbL^2 + \kappa \| \hat\mu_n \|_1  \leq \| \vn \|_\bbL^2 + \kappa \| \mu^0 \|_1.
$$
Using now $\Pm$ obtained in Theorem \ref{theo:main_certificate}, we deduce that
\begin{equation}\label{eq:utile_plus_loin}
\| L\hat f_n - L\circ\Phi \hat\mu_n \|_\bbL^2 + \kappa \left[ \| \hat\mu_n \|_1 - \| \mu^0\|_1 - \int_{\bR^d} \Pm\dd(\hat\mu_n - \mu^0) \right] + \kappa  \int_{\bR^d} \Pm\dd(\hat\mu_n - \mu^0)  \leq \| \vn \|_\bbL^2.
\end{equation}
Hence, we deduce the following upper bound on the Bregman divergence:
\begin{equation}\label{eq:bregman_upper_bound}
\| L\hat f_n - L\circ\Phi \hat\mu_n \|_\bbL^2 + \kappa \DP(\hat\mu_n,\mu^0) + \kappa  \int_{\bR^d} \Pm\dd(\hat\mu_n - \mu)  \leq \| \vn \|_\bbL^2.
\end{equation}
According to Theorem \ref{theo:main_certificate}, $\Pm= \Phi \com$ for some $\com \in \bbL$. In particular, we get
\begin{align}
\int_{\bR^d} \Pm \dd(\hat\mu_n - \mu^0)
& =  \langle \Pm , \hat\mu_n - \mu^0 \rangle_{L^2(\bR^d)} , \nonumber \\
& =  \langle \Phi \com, \hat\mu_n - \mu^0 \rangle_{L^2(\bR^d)} ,  \nonumber \\
& =  \langle  \com, \Phi(\hat\mu_n - \mu^0) \rangle_{L^2(\bR^d)} ,  \nonumber 
\end{align}
where the last equality comes from the self-adjoint property of $\Phi$ in $L^2(\bR^d)$.
The reproducing kernel relationship yields:
\begin{align}
\int_{\bR^d} \Pm \dd(\hat\mu_n - \mu^0)
& =  \int_{\bR^d} \langle \com, \lambda(t-.)\rangle_\bbL \Phi(\hat\mu_n - \mu^0)(t)dt, \nonumber \\
& =  \langle \com, L\circ\Phi(\hat\mu_n - \mu^0) \rangle_\bbL, \nonumber \\
& =  \langle  \com, L\circ\Phi\hat\mu_n - L\hat f_n + \vn \rangle_{\bbL} .
\label{eq:up3}
\end{align}
Gathering (\ref{eq:bregman_upper_bound}) and (\ref{eq:up3}), we deduce that:
$$
\| L\hat f_n - L\circ\Phi \hat\mu_n \|_\bbL^2  + \kappa \DP(\hat\mu_n,\mu^0) + \kappa \langle  \com, L\circ\Phi\hat\mu_n - L\hat f_n \rangle_{\bbL}  + \kappa \langle  \com,   \vn \rangle_{\bbL}  \leq \| \vn \|_\bbL^2\,.$$
Using now a straightforward computation with 
$\|.\|_{\bbL}$, we conclude that:

$$ \left\| L \hat f_n - L\circ\Phi \hat\mu_n - \frac{\kappa}{2} \com \right\|_\bbL^2+  \kappa \DP(\hat\mu_n,\mu^0)  \leq \left\| \vn - \frac \kappa 2 \com \right\|_\bbL^2 .$$
Since the first term of the left hand side is positive, the previous inequality leads to: 
\begin{equation}
\DP(\hat\mu_n,\mu^0) \leq \frac{3}{2\kappa } \| \vn \|_\bbL^2 + \frac{3\kappa }{4} \| \com \|_\bbL^2,
\label{eq:up4}
\end{equation}
where we have used $\|a+b\|_\bbL^2\leq 1.5 \|a\|_\bbL^2 + 3 \|b\|_\bbL^2$ with $a=\vn$ and $b=-\kappa  \com/2$ 
for the right hand side. 
We now consider a sequence $(\rho_n)_{n\in \mathds{N}^*}$ such that $\E[\| \vn \|_\bbL^2] \leq \rho_n^2$ for all $n\in \mathds{N}^*$ and we choose: 
\begin{equation*}
\label{eq:choixl}
\kappa = \sqrt{2} \rho_n  / \| \com \|_\bbL.
\end{equation*}
Then  we deduce from (\ref{eq:up4}) that:
\begin{equation}
\E[ \DP(\hat\mu_n,\mu^0)] \leq  \frac{3 \sqrt{2}}{2} \rho_n \times \| \com \|_\bbL.
\label{eq:up5}
\end{equation}

\noindent
\underline{Proof of $ii)$.}
We now derive an upper bound on $\|\com\|_{\bbL}$.  Recall that according to (\ref{eq:H0}) and in particular (\ref{eq:equaL}) we have:
$$ \| g \|^2_\bbL = \int_{\bR^d} \frac{ |\mathcal{F}[g](t)|^2}{\mathcal{F}[\lambda](t)} dt \quad \forall g\in \bbL.$$
Since $\varphi$ is symmetric and $\Phi^\star = \Phi$, we have according to Theorem \ref{theo:main_certificate} that:
\begin{eqnarray}
\| \Pm \|^2_{2} &=& \| \Phi \com \|^2_{2},  \nonumber \\
& = &\int_{\bR^d} |\mathcal{F}[\varphi](t)|^2 |\mathcal{F}[\com](t)|^2 dt, \nonumber \\
& = & \int_{\bR^d} |\mathcal{F}[\varphi](t)|^2 \mathcal{F}[\lambda](t)\times \frac{  |\mathcal{F}[\com](t)|^2}{\mathcal{F}[\lambda](t)} dt, \nonumber \\
&\geq & \inf_{\|t\|_\infty\leq 4m} \left\lbrace |\mathcal{F}[\varphi](t)|^2 \mathcal{F}[\lambda](t) \right\rbrace  \| \com \|_\bbL^2.
\label{eq:inter1}
\end{eqnarray}
Indeed, $iii)$ of Theorem \ref{theo:main_certificate} entails that the support of the Fourier transform of $\Pm$ is contained in $[-4m,4m]^d$. This embedding, together with (\ref{eq:H0_intro}) entails: 
$$ \mathrm{Supp}(\mathcal{F}[\Pm]) \subset [-4m,4m]^d,$$
which provides the last inequality. The inequality (\ref{eq:inter1}) can be rewritten as:
\begin{equation}
\| \com \|_\bbL^2 \leq \frac{\| \Pm \|_2^2}{\inf_{\|t\|_\infty\leq 4m} \left\lbrace |\mathcal{F}[\varphi](t)|^2 \mathcal{F}[\lambda](t) \right\rbrace}.
\label{eq:majoc0}
\end{equation}
We use (\ref{eq:up5}), (\ref{eq:majoc0}) and observe that $|\mathcal{F}[\varphi]|=\sigma$ to conclude the proof.
\end{proof}

\subsection{Near and Far region estimations}
\label{s:proof_main}
In this paragraph, we provide the main result of the paper that establishes the statistical accuracy of our BLASSO estimation.

\begin{proof}[Proof of Theorem \ref{thm:main}]~\\
\underline{Proof of $i)$}
In a first time, we provide a lower bound on the Bregman divergence. This bound takes advantage on the properties of the dual certificate associated to Theorem \ref{theo:main_certificate}. First remark that
\begin{eqnarray*}
\int \Pm \dd(\hat \mu_n - \mu^0) &=& \int \Pm \dd\hat\mu_n - \sum_{k=1}^K a_k^0 \Pm(t_k)\\
&\leq & \|\hat \mu_n\|_1 - \|\mu^0\|_1,
\end{eqnarray*}
since $\Pm(t_k)=1$ for all $k$. This inequality yields the positiveness of the Bregman divergence:
$$\DP(\hat \mu_n,\mu^0) \geq 0.$$
Now, using similar arguments and the Borel's decomposition $\hat \mu_n = \hat\mu_n^+ - \hat\mu_n^-$, we obtain
\begin{eqnarray}
\DP(\hat\mu_n,\mu^0) & = & \| \hat\mu_n \|_1 - \| \mu^0\|_1 - \int \Pm \dd \hat\mu_n  + \int \Pm \dd \mu^0, \nonumber \\
& = & \| \hat\mu_n \|_1  - \int \Pm \dd \hat\mu_n, \nonumber\\
& = & \int \dd \hat\mu_n^+ + \int \dd \hat\mu_n^- - \int \Pm \dd \hat\mu_n^+ +\int \Pm \dd \hat\mu_n^-, \nonumber \\
& = & \int (1- \Pm) \dd \hat\mu_n^+ +  \int (1+ \Pm) \dd \hat\mu_n^-. \nonumber
\end{eqnarray}
%
 Proposition \ref{prop:up} then implies that:
\begin{equation} 
\E \left[\int (1- \Pm) \dd \hat\mu_n^+ +  \int (1+ \Pm) \dd \hat\mu_n^- \right] \leq \frac{3\sqrt{2}}{2} \rho_n \mathcal{C}_m(\varphi,\lambda). 
\label{eq:toto}
\end{equation}
\underline{Weight of the negative part.}
Since the dual certificate $\Pm$ is always positive, we have
\begin{equation}
\mu_n^{-}(\bR^d) = 
\int \dd \hat\mu_n^- \leq \int (1+\Pm)\dd \hat\mu_n^- \leq \frac{3 \sqrt{2}}{2} \rho_n \mathcal{C}_m(\varphi,\lambda).
\label{eq:inter2}
\end{equation}
Moreover, according to item $ii)$ of Theorem \ref{theo:main_certificate}, 
$$
 1- \Pm(t) \geq \etaa \frac{\upsilon^2}{d^3} \quad \forall t\in\bbF(\epsilon).
$$
Therefore, we obtain that:
\begin{equation}
\hat\mu_n^+(\bbF(\epsilon)) \leq \frac{d^3}{\etaa \upsilon^2} \int_{\bbF(\epsilon)} (1- \Pm) \dd \hat\mu_n^+ \leq \frac{d^3}{\etaa \upsilon^2} \int (1- \Pm) \dd \hat\mu_n^+.
\label{eq:inter3}
\end{equation}
Finally, the first part of $i)$ of Theorem \ref{thm:main} is a direct consequence of (\ref{eq:toto})-(\ref{eq:inter3}). 

\noindent
\underline{Weight of the far region.}
We consider $\gamma$ such that $d^3 \geq \etaa \upsilon^2$ and we know that in the far region:
$$
(1-\Pm) \mathds{1}_{\bbF(\epsilon)} \geq \frac{\gamma \upsilon^2}{d^3} \mathds{1}_{\bbF(\epsilon)}.
$$
Thus,
\begin{eqnarray*}
\DP(\hat\mu_n,\mu^0) &=& \int (1- \Pm) \dd \hat\mu_n^+ +  \int (1+ \Pm) \dd \hat\mu_n^-\\
&\geq& \int_{\bbF(\epsilon)}  \frac{\etaa \upsilon^2}{d^3} \dd \hat\mu_n^+ +  \int_{\bbF(\epsilon)} 1 \dd \hat\mu_n^-\\
&\geq&  \frac{\etaa \upsilon^2}{d^3} \int_{\bbF(\epsilon)} \dd \hat\mu_n^+ +  \int_{\bbF(\epsilon)} \dd \hat\mu_n^-\\
& \geq & \frac{\etaa \upsilon^2}{d^3} \left(  \int_{\bbF(\epsilon)} \dd \hat\mu_n^+ +   \int_{\bbF(\epsilon)}\dd \hat\mu_n^- \right)\\
& \geq & \frac{\etaa \upsilon^2}{d^3} |\hat\mu_n|(\bbF(\epsilon)).
\end{eqnarray*}
We then conclude, using the previous expectation upper bound, that:
$$
\E[|\hat\mu_n|(\bbF(\epsilon))] \leq \frac{d^3}{\etaa \upsilon^2} \frac{3\sqrt{2}}{2}\rho_n\mathcal{C}_m(\varphi,\lambda).
$$

\noindent
\underline{Proof of $ii)$.} Thanks to Theorem \ref{theo:main_certificate}, we have:
$$ 1-\Pm(t) \geq \left[\mathcal{C} m^2 \min_{k\in[K]} \| t -t_k \|_2^2 \wedge \frac{\etaa \upsilon^2}{d^3} \right] \quad \forall t\in \bR^d.$$
Then, for any subset $A\subset \bR^d$, 
\begin{eqnarray} 
\DP(\hat \mu_n,\mu^0) & \geq & \int (1-\Pm) \dd \mu_n^+ \nonumber\\
& \geq & \int_A (1-\Pm) \dd \mu_n^+, \nonumber \\
& \geq & \left[\mathcal{C} m^2 \min_{t\in A} \min_{k \in [K]} \| t-t_k \|^2 \wedge \frac{\etaa \upsilon^2}{d^3} \right]\hat\mu_n^+(A). \label{eq:mun+}
\end{eqnarray}
Equations~\eqref{eq:toto} and~\eqref{eq:mun+} lead to: 
$$ \left[\mathcal{C} m^2 \min_{t\in A} \min_{k \in [K]} \| t-t_k \|^2 \wedge \frac{\etaa \upsilon^2}{d^3} \right] \E[\hat\mu_n^+(A)] \leq \frac{3\sqrt{2}}{2} \rho_n \mathcal{C}_m(\varphi,\lambda).$$
Then, 
$$
\E[\hat\mu_n^+(A)] \geq \frac{3\sqrt{2}}{2} \rho_n \mathcal{C}_m(\varphi,\lambda) \frac{d^3}{\etaa \upsilon^2}
\Rightarrow
\min_{t\in A} \min_{k \in [K]} \| t-t_k \|_2^2 \leq \frac{\etaa \upsilon^2}{d^3 m^2 \mathcal{C}}.
$$

\noindent
\underline{Proof of $iii)$.} The idea of this proof is close to the one of~\cite[Theorem 2.1]{Azais_DeCastro_Gamboa_15}.
We consider the function $\Qmk$ given by Corollary \ref{coro:Qmk} that interpolates $1$ at $t_k$ and $0$ on the other points of the support of $\mu^0$. From the construction of $\Qmk$, we have that:
$$
a_k^0 = \int \Qmk \dd \mu^0.
$$
 We then use the decomposition:
\begin{eqnarray}
|a_k^0-\hat{\mu}_n(\bbN_k(\epsilon))|& = & |a_k^0 - \int \Qmk \dd \hat{\mu}_n + \int \Qmk \dd \hat{\mu}_n - \int_{\bbN_k(\epsilon)} \dd \hat{\mu}_n|
\nonumber\\
& \leq &\underbrace{|\int \Qmk \dd(\mu^0-\hat{\mu}_n)|}_{:=A}
+ \underbrace{\int_{\bbN_{k}(\epsilon)} |\Qmk-1| \dd|\hat{\mu}_n|}_{:=B} \nonumber\\
& &
+ \underbrace{\int_{\bbN(\epsilon) \setminus \bbN_{k}(\epsilon)}|\Qmk|\dd |\hat{\mu}_n|}_{:=C}
+ \underbrace{\int_{\bbF(\epsilon)} |\Qmk| \dd|\hat{\mu}_n|}_{:=D}. \label{eq:decomposition}
\end{eqnarray}
\underline{Study of $B+C+D$.}
On the set $\bbF(\epsilon)$, we use that $\Qmk \leq 1-\gamma \frac{\upsilon^2}{d^3}$ so that: 
$$
D \leq \int_{\bbF(\epsilon)} (1-\gamma \frac{\upsilon^2}{d^3}) \dd|\hat{\mu}_n| \leq \diamond \int_{\bbF(\epsilon)} (1-\Qmk) \dd   |\hat{\mu}_n|\quad \text{where} \quad 
\diamond = 
\frac{\left(1-\gamma \frac{\upsilon^2}{d^3}\right)}{\gamma \frac{\upsilon^2}{d^3}}.
$$ For the term $C$, we use the upper bound satisfied by $\Qmk$ in $\bigcup_{i \neq k} \bbN_i(\epsilon)$ and obtain that:
\begin{eqnarray*}
\int_{\bbN(\epsilon) \setminus \bbN_k(\epsilon)} |\Qmk| \dd |\hat{\mu}_n| &\leq &\widetilde{\mathcal{C}} m^2 \int_{\bbN(\epsilon) \setminus \bbN_k(\epsilon)} 
\min_{i \neq k} \|t-t_i\|_2^2  \dd |\hat{\mu}_n|(t)\\
& \leq &
\frac{\widetilde{\mathcal{C}}}{\mathcal{C}}
\int_{\bbN(\epsilon) \setminus \bbN_k(\epsilon)} (1-\Pm) \dd |\hat{\mu}_n|\,.
\end{eqnarray*}
Finally, for $B$, we use that on the set $\bbN_k(\epsilon)$, we have $|\Qmk -1|\leq \widetilde{\mathcal{C}} m^2 \|t-t_k\|_2^2$. Therefore, we have:
$$
B \leq \frac{\widetilde{\mathcal{C}}}{\mathcal{C}}
\int_{ \bbN_k(\epsilon)} (1-\Pm) \dd |\hat{\mu}_n|.
$$
We then conclude that:
\begin{eqnarray}
B+C+D &\leq& \left(\frac{\widetilde{\mathcal{C}}}{\mathcal{C}} \vee \diamond \right) \int_{\bR^d} (1-\Pm)(t) \dd |\hat\mu_n|(t)\nonumber \\
& \leq &  \left(\frac{\widetilde{\mathcal{C}}}{\mathcal{C}} \vee \diamond \right) \left[ \int_{\bR^d} (1-\Pm)(t) \dd \hat{\mu}_n^+(t) + \int_{\bR^d} (1+\Pm)(t) \dd \hat{\mu}_n^-(t) \right] \nonumber\\
& \leq & \left(\frac{\widetilde{\mathcal{C}}}{\mathcal{C}} \vee \diamond \right) \DP(\hat \mu_n,\mu^0). \label{eq:upBCD}
\end{eqnarray}
\underline{Study of $A$.}
We use that $\Qmk$ may be written as:
$$
\Qmk = \Phi c_{k,m}, \quad \text{where} \quad c_{k,m} \in \bbL.
$$
Since $\Phi$ is self-adjoint in $L^2$, we shall write that:
\begin{align*}
A=|\int \Qmk   \dd (\mu^0-\hat{\mu}_n)| & = |\langle \Qmk,\hat\mu_n-\mu^0\rangle_{L^2}| \\
& = |\langle c_{k,m} ,\Phi(\hat{\mu}_n-\mu^0)\rangle_{L^2}| \\
& = | \langle c_{k,m},L \circ \Phi \hat{\mu}_n - L \hat{f}_n + \vn \rangle_{\bbL}| \\
& \leq \|c_{k,m}\|_{\bbL} [\|L \circ \Phi \hat{\mu}_n - L \hat{f}_n\|_{\bbL} + \|\vn\|_{\bbL}],
\end{align*}
where we used the Cauchy-Schwarz inequality and the triangle inequality in the last line.
We then use~\eqref{eq:utile_plus_loin} and obtain:
$$
\| L\hat f_n - L\circ\Phi \hat\mu_n \|_\bbL^2  + \kappa \DP(\hat\mu_n,\mu^0) + \kappa \langle  \com, L\circ\Phi\hat\mu_n - L\hat f_n \rangle_{\bbL}  + \kappa \langle  \com,   \vn \rangle_{\bbL}  \leq \| \vn \|_\bbL^2\,.$$
Since we have obtained the positiveness of the Bregman divergence, we then conclude that: 
$$
\| L\hat f_n - L\circ\Phi \hat\mu_n \|_\bbL^2  + \kappa \langle  \com, L\circ\Phi\hat\mu_n - L\hat f_n \rangle_{\bbL}    \leq \| \vn \|_\bbL^2 - \kappa \langle  \com,   \vn \rangle_{\bbL}\,.$$
The Cauchy-Schwarz inequality yields:
$$
\| L\hat f_n - L\circ\Phi \hat\mu_n \|_\bbL^2  - \kappa \|\com\|_{\bbL} \| L\circ\Phi\hat\mu_n - L\hat f_n \|_{\bbL}    \leq \| \vn \|_\bbL^2 + \kappa \| \com\|_{\bbL} \|   \vn \|_{\bbL}\,.$$
This inequality holds for any value of $\kappa$ and we choose: 
$$
\kappa=\frac{\| L\hat f_n - L\circ\Phi \hat\mu_n \|_\bbL}{2 \|\com\|_{\bbL}}.
$$
Using this value of $\kappa$, we then obtain:
$$
\frac{\| L\hat f_n - L\circ\Phi \hat\mu_n \|_\bbL^2}{2} \leq \|\vn\|_{\bbL}^2+
 \|\vn\|_{\bbL}
\| L\hat f_n - L\circ\Phi \hat\mu_n \|_\bbL \,.
$$
Now, we define  $\Box_n= \| L\hat f_n - L\circ\Phi \hat\mu_n \|_\bbL \|\vn\|_{\bbL}^{-1}$ and remark that:
$$
\frac{\Box_n^2}{2} \leq 1+\Box_n.
$$
This last inequality implies that $\Box_n \leq 1+\sqrt{3}$, which leads to:
$$
\| L\hat f_n - L\circ\Phi \hat\mu_n \|_\bbL \leq (1+\sqrt{3}) \|\vn\|_{\bbL}.
$$
We then come back to $A$ and write that:
\begin{equation}\label{eq:upA}
A \leq (2+\sqrt{3}) \|c_{k,m}\|_{\bbL} \|\vn\|_{\bbL}.
\end{equation}
\noindent
\underline{Final bound.}
We use Equations~\eqref{eq:upA} and~\eqref{eq:upBCD} in the decomposition given in Equation~\eqref{eq:decomposition} and obtain that:
$$
\E \left[ 
|a_k^0-\hat{\mu}_n(\bbN_k(\epsilon))|\right] \lesssim 
\rho_n \left( \|c_{k,m}\|_{\bbL}+\|c_{0,m}\|_{\bbL}\right).
$$
Finally, we conclude the proof using Equation~\eqref{ineq:ckm} and $ii)$ of Proposition \ref{prop:up}:	
$$
\E \left[ |a_k^0-\hat{\mu}_n(\bbN_k(\epsilon))| \right] \lesssim 
\rho_n
\frac{K^2 m^{-d/2}}
 {\sqrt{\displaystyle\inf_{\|t\|_\infty\leq 4m} \left\lbrace \sigma^2(t) \mathcal{F}[\lambda](t) \right\rbrace}}.
$$
\end{proof}

\bibliographystyle{abbrv}
\bibliography{biblio}  

\appendix

\newpage



\section{Tractable Algorithms for BLASSO Mixture Models}
\label{app:algos}
We sketch  three algorithms to compute {\it approximate or exact} solutions to~\eqref{eq:muhat} following the ``{\it \!off-the-grid\,}'' methodology, {\it e.g.,} \cite{Bhaskar_Tang_Recht12,Tang_Bhaskar_Shah_Recht_13,Duval_Peyre_JFOCM_15,Azais_DeCastro_Gamboa_15,de2017exact}. This methodology searches in a {\it gridless manner} the location of the support points $t_i$ of the mixture distribution $\mu^0$. We will present the following methods:
\begin{itemize}
\item {\it Greedy} methods  provide heuristic and theoretical  results such as ‘‘{\it Sliding Frank Wolfe}\,'' \cite{denoyelle2019sliding} (also known as conditional gradient with ‘‘{\it sliding}'' step) or ‘‘{\it Continuous Orthogonal Matching Pursuit}\,'' \cite{keriven2018sketching,elvira2019does}. We describe these methods in Section \ref{sec:SFW}.
\item We  discuss in Section \ref{sec:CGPD} on {\it Conic Gradient Descent} using {\it particles}. Here,  $\hat{\mu}_n$ is approximated by a cloud of particles that is optimized all along a set of iterations.
\end{itemize}
We emphasize that if Beurling-LASSO has been studied in the past decade, the formulation~\eqref{eq:muhat} has two new important features. First the observation is a sample from a mixing law. Second, the data fidelity term has been tuned to incorporate a low pass filter kernel $\lambda$. The next paragraph carefully introduces these new features into the latter algorithms.

\subsection{Notation for algorithm design solving BLASSO Mixture Models}\label{sec:notBLASSO}
We call that primal and dual convex programs of BLASSO for Mixture Models~\eqref{eq:muhat} are given by Theorem~6, 
and that {\it strong duality} holds, leading to Equation~\eqref{e:dual_poly}. 

\paragraph{Gradient of the data fidelity term}
The {\it data fidelity} term defined 
\[
\mathrm C_{\lambda}(\Phi\mu,\hat{f}_n):=\|L\hat f_n-L\circ\Phi\mu \|_{\bbL}^2, \quad  \forall  \mu \in \cM(\bR^d,\bR)\,
\]
can be seen is related to the  real-valued function~$F$ on the space of measures $\cM(\mathds R^d,\mathds R)$ endowed with the total-variation norm $\|\cdot\|_1$, namely:
\[
\forall \mu\in\cM(\mathds R^d,\mathds R)\,,\quad {\mathrm F}(\mu):=\frac{1}{2} C_{\lambda}(\Phi\mu,\hat{f}_n) = \frac12\| L\hat f_n - L\circ\Phi \mu \|_{\bbL}^2\, ,
\]
whose Fréchet differential at point $\mu\in\cM(\mathds R^d,\mathds R)$ in the direction $\nu\in\cM(\mathds R^d,\mathds R)$ is:
\[
\mathrm{dF}(\mu)(\nu):=\int_{\mathds R^d} \Phi (L\circ\Phi\mu- L\hat f_n)\mathrm{d}\nu=\int_{\mathds R^d} \nabla\mathrm{F}(\mu)\,\mathrm{d}\nu.
\]
Thanks to the convolution by $\varphi$ endowed in $\Phi$, the gradient $\nabla F(\mu)$ is given by:
\[
\nabla\mathrm{F}(\mu):=\Phi (L\circ\Phi\mu- L\hat f_n)=\varphi\star\Big[\lambda\star\varphi\star\mu -\frac1n\sum_{i=1}^n \lambda(\,\cdot-{X_i})\Big]\in\cC_0(\bR^d,\bR)\cap L^1(\bR^d)\,.
\]

\paragraph{Dual functions}
By~\eqref{e:dual_poly}, note that the {\it dual function} $\hat \eta_n$ is such that $\hat \eta_n=-{\nabla\mathrm{F}(\hat\mu_n)}/{\kappa}$. Indeed, for a given $\mu\in\cM(\mathds R^d,\mathds R)$, one may define its dual function by $\eta_\mu$:
\[
\eta_\mu:=-\frac{\nabla\mathrm{F}(\mu)}{\kappa}=\frac1{\kappa}{\Phi (L\hat f_n-L\circ\Phi\mu)}\quad\text{ so that }\quad \nabla\mathrm{F}(\mu)+\kappa = \kappa\big(1-\eta_\mu\big)\,,
\]
and we observe that $\eta_\mu$ corresponds to a {\it residual}\,, which involves the difference between $\hat{f}_n$ and $\Phi \mu$ smoothed by the convolution operator $L$.
Its gradient is given by:  $$\displaystyle\nabla\eta_\mu:=\frac1{\kappa}{\nabla\varphi\star\Big[\lambda\star\varphi\star\mu -\frac1n\sum_{i=1}^n \lambda(\,\cdot-{X_i})\Big]}.$$

\paragraph{Data fidelity term for measures with finite support}
We pay a specific attention to discrete measures with finite support, namely a finite sum of Dirac masses. Given a number of atoms $N\geq 1$, of weights $a\in\mathds R^N$ and locations $t=(t_1,\ldots,t_N)\in(\mathds R^d)^N$, we denote  by:
\begin{equation}\label{def:FN}
\mu_{a,t}:=\displaystyle\sum_{i=1}^{N}a_i\delta_{t_i}\quad \text{and}\quad \mathrm F_N(a,t):={\mathrm F}(\mu_{a,t})
=\frac12\| L\hat f_n - \sum_{i=1}^{N}a_iL\circ\Phi\delta_{t_i} \|_{\bbL}^2\,.
\end{equation}
By Proposition 5, 
$a\longmapsto \mathrm F_N(a,t)$ is a {\it positive semi-definite} quadratic form and 
\[
\mathrm F_N(a,t)= \frac12\|L\hat f_n\|_{\bbL}^2 +\sum_{i=1}^Na_ib_i+\frac12\sum_{i,j=1}^Na_ia_jq_{ij},
\,
\]
where 
$$
b_i = -\frac1n\sum_{k=1}^n\int_{\bR^d}\lambda(x-X_k)\varphi(x-t_i)\dd x \quad \text{and} \quad q_{ij}=\int_{\bR^d\times\bR^d}\lambda(x-y)\varphi(x-t_i)\varphi(y-t_j)\dd x \dd y  .
$$

\subsection{Greedy methods: Sliding Frank-Wolfe / Continuous Orthogonal Matching Pursuit \label{sec:SFW}}
\paragraph{Sliding Frank-Wolfe algorithm (SFW)}
The Frank-Wolfe algorithm is an interesting avenue for solving {\it differentiable convex} programs on {\it weakly compact convex} sets, see \cite{denoyelle2019sliding} and references therein for further details. {\it Stricto sensu}, $(\mathbf{P}_\kappa)$ is convex but not differentiable and the feasible set is convex but not weakly compact. Following~\cite[Lemma 4]{denoyelle2019sliding}, note that $\hat\mu_n$ is a minimizer of~$(\mathbf{P}_\kappa)$ {\it if and only if} $(\|\hat\mu_n\|_1,\hat\mu_n)$ minimizes:
\eq
\inf 
\left\{ \frac12\| L\hat f_n - L\circ\Phi \mu \|_{\bbL}^2 + \kappa\, m\, :\ (m,\mu) \in \mathds R\times \mathcal{M}(\bR^d,\bR)\ \text{s.t.}\ \|\hat\mu_n\|_1\leq m\leq \frac{\| L\hat f_n\|_{\bbL}^2}{2\kappa} \right\}\,,
\notag
\qe
and this latter program is a {differentiable convex} program on {weakly compact convex} set (for the {\it weak}-$\star$ topology). 

Hence we can invoke the {\it Frank-Wolfe} scheme to compute approximate solutions to BLASSO Mixture Models~\eqref{eq:muhat}. Unfortunately, the generated measures $\mu^{(k)}$ along this {\it greedy} algorithm are not very sparse compared to $\hat\mu_n$: each Dirac mass of $\hat\mu_n$ is approximated by a multitude of Dirac masses of $\mu^{(k)}$ with an inexact positions. This is why the improvement of \textit{sliding} the Frank Wolfe algorithm is suggested in \cite{denoyelle2019sliding}:  Equation~\eqref{e:non_conv}  involved in the resolution of the BLASSO allows to move the Dirac masses. Algorithm~\ref{sec:sfw-alg:sfw} is the sliding Frank-Wolfe \cite[Algorithm~2]{denoyelle2019sliding} adapted for the resolution of the BLASSO Mixture Models~\eqref{eq:muhat}.

\pagebreak[3]

Following the analysis of \cite{denoyelle2019sliding}, let us discuss the steps at lines $3$, $4$, $7$ and $8$ of Algorithm~\ref{sec:sfw-alg:sfw}:
\begin{itemize}
\item Line $3$: This step is an {\it optimal} gradient step, with the notations in Section \ref{sec:notBLASSO}:
\[
\eta^{(k)}=\frac1\kappa\Phi(L\hat f_n-L\circ\Phi\hat\mu^{(k)}) =\frac1\kappa\,\varphi\star\Big[\frac1n\sum_{i=1}^n \lambda(\,\cdot-{X_i})-\lambda\star\varphi\star\hat\mu^{(k)}\Big]\,,
\]
Note also that this step is the {\it costly} step of the algorithm since is relies on a {\it black-box} optimizer computing the {\it global} maximum of $|\eta^{(k)}|$. In general, this is done using a grid search and finding a local maxima by gradient descent.
\item Line $4$: the stopping condition implies that $\hat\mu^{(k)}$ is an {\it exact} solution and hence, $\eta^{(k)}$ is the dual function such that Equation~\eqref{e:dual_poly} holds. In this sense, we may say that SFW iteratively construct a {\it dual function} such that~\eqref{e:dual_poly} holds. 
\item Line $7$~\eqref{e:lasso_step}: note that the support is fixed and we are optimizing on the amplitudes $a$. It amounts in solving a standard LASSO, which can be efficiently done using proximal forward-backward schemes such as FISTA for instance.
\item Line $8$: it requires solving a non-convex optimization program~\eqref{e:non_conv}. As mentioned in~\cite{denoyelle2019sliding}, one does not need to exactly solve this program and their main result (namely finite convergence of the algorithm, see below) pertains if $(a^{(k+1)},t^{(k+1)})$, obtained by a gradient step initialized in $(a^{(k+\frac12)},t^{(k+\frac12)})$, diminishes the objective function. This is done by a bounded 
Broyden-Fletcher-Goldfarb-Shanno (BFGS) method in \cite{denoyelle2019sliding}, which is numerically shown to be rapid with few iterations needed.
\end{itemize}

An important feature of SFW is that it can actually lead to {\it exact} solutions. Under a {\it Non-Degeneracy Condition} alike to~\eqref{eq:NDSCB}, the result in \cite[Theorem~3]{denoyelle2019sliding} proves that Algorithm~\ref{sec:sfw-alg:sfw} recovers exactly $\hat \mu_n$ in a finite number of steps. They also show \cite[Proposition 5]{denoyelle2019sliding} that the generated measure sequence $(\mu^{(k)})_k$ converges towards $\hat\mu_n$ for the weak-$\star$ topology.

As mentioned in \cite[Remark 8]{denoyelle2019sliding}, Algorithm~\ref{sec:sfw-alg:sfw} can be adapted to build a {\it positive} measure as follows 
\begin{itemize}
\item the stopping condition $|\eta^{(k)}(t_\star^{(k)})|\leq 1$ becomes $\eta^{(k)}(t_\star^{(k)})\leq 1$;
\item the LASSO is solved on $a\in\mathds R_+^{N^{(k)}+1}$;
\item the step~\eqref{e:non_conv} is solved on $\mathds R_+^{N^{(k)}+1}\times(\mathds R^d)^{N^{(k)}+1}$.
\end{itemize}

\paragraph{Continuous Orthogonal Matching Pursuit (COMP)}
Continuous Orthogonal Matching Pursuit (COMP) \cite{elvira2019does} is another greedy approach that is the Orthogonal Matching Pursuit approximation algorithm  \cite{pati1993orthogonal} adapted in the context of continuous parametric dictionaries. This framework fits ours and COMP can be applied to Mixture Models estimation. Continuous Orthogonal Matching Pursuit (COMP) is an iterative algorithm that add a Dirac mass one at the time, building a sequence of measures $\hat\mu^{(k)}$, but it does not solve the BLASSO Mixture problem~\eqref{eq:muhat} {\it per se}. Nevertheless it builds a sequence of dual functions $\eta^{(k)}=\frac1\kappa\Phi(L\hat f_n-L\circ\Phi\hat\mu^{(k)})$, referred to as the {\it residual} in the framework of COMP. The Dirac mass added to the model is defined as in Line \ref{computeNextPos} of Algorithm~\ref{sec:sfw-alg:sfw} (SFW for BLASSO Mixture Models) but the weights $a$ are updated differently (we referred to \cite[Algorithm 1]{keriven2018sketching}): alternating between {\it hard-thresholding} \cite[Step~3 in Algorithm 1]{keriven2018sketching} and even some {\it sliding}-flavour step \cite[Step 5 in Algorithm 1]{keriven2018sketching}.


\subsection{Conic Particle Gradient Descent (CPGD) \label{sec:CGPD}}
Conic Particle Gradient Descent \cite{chizat2019sparse} is an alternative promising avenue for solving BLASSO for Mixture Models~\eqref{eq:muhat}. The idea is to discretize a positive measure into a system of particles, \textit{i.e.} a sum of $N$ Dirac masses, by:
\[
\mu_{a,t}=\frac1N\sum_{i=1}^N a_i\delta_{t_i},
\]
with $a_i=r_i^2$. We observe that the objective function involved in the minimization of Equation~\eqref{eq:muhat} is given by:
\[
H(r,t)=F_N((r_i^2),(t_i))+\kappa\sum_{i}r_i^2\quad\text{with } (r,t)= ((r_i),(t_i))\in(\mathds R_+)^N\times  (\mathds R^d)^N,
\]
where $F_N$ is defined in Equation~\eqref{def:FN}. As already emphasized in \cite{chizat2019sparse}, Equation~\eqref{eq:muhat} is a convex program in $\mu$ whereas the parametrization given in $H$ translates this minimization  into a {\it non-convex} differentiable problem in terms of $r$ and $t$. This function $H$ can be seen as an instance of the BLASSO Equation~\eqref{eq:muhat} for the measure $\mu_{a,t}$, namely a convex program that does not depends on the number of Particles~$N$.
All the more, it is possible to run a  gradient descent on positions $t_i\in\mathds R^d$ and weights $r_i>0$ of the~$N$ particles system.
 The crucial ingredient is then to implement a gradient descent on the lifted problem in the {\it Wasserstein} space approximating the {\it Wasserstein gradient flow}. For two step-sizes $\alpha>0$ and $\beta>0$, and for any position $(r,t)$, we define the Riemannian inner product by: $
 \forall (\delta r_1,\delta r_2) \in \mathds{R}_+^2 \quad \forall (\delta t_1,\delta t_2) \in \{\mathds{R}^d\}^2$:
 $$\left\langle (\delta r_1,\delta t_1)(\delta r_2,\delta t_2)\right\rangle_{r,t} :=\frac{\delta r_1 \delta r_2}{\alpha}  + r^2 \frac{ \sum_{i=1}^d(\delta t_1)_i (\delta t_2)_i}{\beta}.
 $$
The  gradient w.r.t. this \textit{conic} metric is given by:
\begin{align*}
\nabla_{r_i}H&=2\alpha r_i(\nabla_{r_i} F(\mu_{a,t})+\kappa)=-2\alpha r_i \kappa(\eta_{\mu_{a,t}}-1)\,,
\\
\nabla_{t_i}H&=-\beta\kappa\nabla\eta_{\mu_{a,t}}\,,
\end{align*}
and the {\it Wasserstein gradient} \cite[Section 2.2]{chizat2019sparse} is $g_\mu(r,t)=(-2\alpha r \kappa(\eta_{\mu}(t)-1),-\beta\kappa\nabla\eta_{\mu}(t))$ for a.e. point $(r,t)\in\Omega$.

Using standard mean-field limits of gradient flows in Wasserstein space (see \textit{e.g.} \cite{Santambrogio}), it is possible to prove that the approximate gradient flows of the $N$-particles system converge towards the gradient flow on the Wasserstein space when $N \longrightarrow +\infty$ (see Theorem~2.6 and Theorem~3.5 of \cite{chizat_bach_2018_nips}). Hence, for a large enough number of particles $N$, it then implies the convergence towards  the global minimizer of $\mu_{a,t} \longmapsto F(\mu_{a,t})$ itself, despite the lack of convexity of the function $(r,t) \longmapsto F_N(r,t)$. We refer to Theorem~3.9 of \cite{chizat2019sparse} that establishes the convergence of the particle gradient descent with a constant step-size under some {\it non-degeneracy} assumptions, \textit{i.e.} the convergence of the CPGD toward $\hat\mu_n$~\eqref{eq:muhat} in {\it Hellinger-Kantorovich} metric, and hence in the {\it weak}-$\star$ sense. Furthermore, Theorem~3.9 provides an exponential convergence rate: CPGD has a complexity scaling as $\log(1/\varepsilon)$ in the desired accuracy $\varepsilon$, instead of $\varepsilon^{-1/2}$ for general accelerated convex methods.

\section{Proofs related to the kernel construction}\label{sec:appendix}

\subsection{Convolution in the RKHS}\label{proof:belongL}
\begin{proof}[Proof of Proposition 4] 
Consider $\mathcal{A}: f \longmapsto x \mapsto \int \ell(x-y) f(y) dy$, $\mathcal{A}$ is a self-adjoint operator. We denote  by $(w_k)_{k \geq 1}$ the non-negative eigenvalues of $\mathcal{A}$ and   $(\psi_k)_{k \geq 1}$ the associated eigenvectors. We shall remark that the following equality holds:
$$
\ell(x,y) = \lambda(x-y) = \sum_{k \geq 1} w_k \psi_k(x) \psi_k(y),
$$
while $\bbL$ corresponds to the next Hilbert space
$$
\bbL = \Big\{ f=\sum_{k \geq 1} c_k(f) \psi_k \, : \, \sum_{k \geq 1} \frac{c_k(f)^2}{w_k} < + \infty\Big\} \qquad \text{and} \qquad  <f,g>_{\bbL} = \sum_{k \geq 1} \frac{c_k(f) c_k(g)}{w_k}\,.
$$
We now consider a non-negative measure $\nu$ and we remark that
\begin{align*}
L\nu(x) &= \lambda\star\nu(x)\\
& = \int \lambda(x-y) \nu(y) dy\\
& = \int \sum_{k \geq 1} w_k \psi_k(x) \psi_k(y) \nu(y) dy\\
& = \sum_{k \geq 1} \Big[w_k \int \psi_k(y) \nu(y) dy\Big] \psi_k(x).
\end{align*}
We  observe that the coefficients of $L\nu$ are $c_k(L\nu) = w_k \int \psi_k(y) \nu(y) dy$. We shall remark that
$$
\|L\nu\|_{\bbL}^2  = \sum_{k \geq 1} \frac{w_k^2 
\left[\int \psi_k(y) \nu(y) dy\right]^2}{w_k} 
 = \sum_{k \geq 1} w_k
\left[\int \psi_k(y) \nu(y) dy\right]^2.$$
The Jensen inequality yields
$$
\|L\nu\|_{\bbL}^2 \leq \sum_{k \geq 1} w_k \int \psi_k^2(y) \nu(y) dy = \int \sum_{k \geq 1} w_k \psi_k(y)^2 \nu(y) dy,
$$
where the last equality comes from the Tonelli Theorem.
We then observe that
$$
\|L\nu\|_{\bbL}^2 \leq  \int \ell(y,y) \nu(y) dy =  \lambda(0)\nu(\mathds R^d)< + \infty,
$$
giving the result.
\end{proof}

\subsection{Computation of the data-fidelity terms}
\label{proof:critere}

\begin{proof}[Proof of Proposition 5] 
Recall that $\Phi\mu\in\cC_0(\bR^d,\bR)\cap L^1(\bR^d)$. Now, given $f\in\cC_0(\bR^d,\bR)\cap L^1(\bR^d)$, one can consider the measure $\mu$ with signed density function~$f$ and we may define:
\[
\forall f\in\cC_0(\bR^d,\bR)\cap L^1(\bR^d)\,,\quad Lf:=\lambda\star f=\int_{\bR^d}\lambda(\cdot-t)f(t)\dd t\,.
\]
The embedding $L$ allows to compare $\hat f_n$ with $\Phi\mu$ in $\cC_0(\bR^d,\bR)\cap L^1(\bR^d)$.  One has:
\begin{align*}
\|Lf^0&-Lf\|_{\bbL}^2-\|Lf^0\|_{\bbL}^2
=-2\langle Lf^0,Lf \rangle_{\bbL}+\|Lf\|_{\bbL}^2\\
&=-2\langle Lf^0,\int_{\bR^d}\ell(\cdot,t) f(t) \dd t \rangle_{\bbL}+\|Lf\|_{\bbL}^2
\\&
=-2\int_{\bR^d}\langle Lf^0,\ell(\cdot,t)\rangle_{\bbL}f(t)\dd t +\|Lf\|_{\bbL}^2\\
&=-2\int_{\bR^d}Lf^0(t)f(t)\dd t +\|Lf\|_{\bbL}^2\\
&=\int_{\bR^d}\Big(-2\int_{\bR^d}\lambda(t-x)f^0(x)\dd x \Big)f(t)\dd t
+\int_{\bR^d\times\bR^d}\lambda(x-y)f(x)f(y)\dd x \dd y\,.
\end{align*}
Replacing $f^0$, which is unknown, by the empirical measure $\hat f_n$ in the previous equation leads to the following criterion:
\begin{align}
\notag
\mathrm C_{\lambda}(f):=\int_{\bR^d}\big[-\frac2n\sum_{i=1}^n\lambda(t-X_i)\big]f(t)\dd t+\int_{\bR^d\times\bR^d}\lambda(x-y)f(x)f(y)\dd x \dd y\, .
\notag
\end{align}
In particular, for all $\mu \in \cM(\bR^d,\bR)$  note that $\Phi\mu\in\cC_0(\bR^d,\bR)\cap L^1(\bR^d)$ and introduce the criterion:
\[
\mathrm C_{\lambda}(\Phi\mu)=\int_{\bR^d}\big[-\frac2n\sum_{i=1}^n\lambda(t-X_i)\big](\Phi\mu)(t)\dd t+\int_{\bR^d\times\bR^d}\lambda(x-y)(\Phi\mu)(x)(\Phi\mu)(y)\dd x\dd  y\,,
\]
which will be investigated in this paper. Note that it holds 
\begin{align*}
&\|L\hat f_n-L\circ\Phi\mu\|_{\bbL}^2-\|L\hat f_n\|_{\bbL}^2\\
&=
\int_{\bR^d}\big[-\frac2n\sum_{i=1}^n\lambda(t-X_i)\big](\Phi\mu)(t)\dd t+\int_{\bR^d\times\bR^d}\lambda(x-y)(\Phi\mu)(x)(\Phi\mu)(y)\dd x\dd y
\,,
\end{align*}
as claimed. 
\end{proof}

\section{Perfect recovery properties - Theorem~2 and Theorem~3} 
\subsection{Perfect recovery}\label{app-PerfRecov}


\begin{proof}[Proof of Theorem~2] 

Remark first that under~\eqref{eq:H0} and~\eqref{eq:H0_intro}, the RKHS, denoted by $\bbH$, generated by the kernel $h(.,.)=\varphi(.-.)$ is dense in $\cC_0(\bR^d,\bR)$ with respect to the uniform norm, see \cite[Proposition 5.6]{caponnetto2008universal} for instance. 
Furthermore, using  \cite[Proposition 2]{sriperumbudur2011universality}, 
we can show that~\eqref{eq:H0_intro} implies that the embedding $\Phi$  is injective onto $\bbH$. This means that we have identifiability of $\mu$ from the knowledge of~$\Phi \mu$. More precisely, denote $f^0:=\Phi \mu^0$, we deduce that if it holds $\|f^0-\Phi \mu \|_{\bbH}^2=0$ then one has $\mu=\mu^0$. 
\end{proof}

\subsection{Perfect recovery with a dual certificate}
\label{s:proof_Certificate_eta}


\begin{proof}[Proof of Theorem~ 3] 
Let 
$$\hat\mu \in \arg\min_{\mu\in\cM(\bR^d,\bR)\ \text{:}\ \Phi \mu = f^0}\|\mu\|_1.$$
\noindent
\underline{Step 1: Support inclusion.}
We observe that both $\hat\mu$ and $\mu^0$ belong to $\cM(f^0)$
so that $\Phi \hat\mu =\Phi \mu^0$. Hence, considering the Fourier transform on both sides and using $\cF[ \Phi(\mu)]=\cF[\varphi]\cF[ \mu]$, we get that $\sigma\cF(\hat\mu)=\sigma\cF(\mu^0)$ which is equivalent to $\cF(\hat\mu)=\cF(\mu^0)$ on the support of $\sigma$. Now, Assumption~\eqref{eq:H1_intro} yields:
\eq
\label{eq:equality_Fourier}
(\cF(\hat\mu)-\cF(\mu^0))\1_{[-\eta,\eta]^d}=0\,.
\qe
Denote by $q_\eta:=\cF(\mathcal{P}_\eta)$ the Fourier transform of $\mathcal{P}_\eta$. By assumption, the support of $q_\eta$ is included in $[-\eta,\eta]^d$ and from~\eqref{eq:equality_Fourier} we get that:
\[
\int_{\bR^d}q_\eta\cF(\hat\mu)=\int_{\bR^d}q_\eta\cF(\mu^0)\,.
\]
Since $\mathcal{P}_\eta\in L^1(\bR^d)$, the Riemann-Lebesgue lemma shows that $q_\eta$ is continuous. Recall also that $q_\eta$ has a compact support so we deduce that $q_\eta\in L^1(\bR^d)$. By Fourier inversion theorem, we have
\[
\int_{\bR^d}\cF(q_\eta)\dd\hat\mu=
\int_{\bR^d}q_\eta\cF(\hat\mu)=\int_{\bR^d}q_\eta\cF(\mu^0)=
\int_{\bR^d}\cF(q_\eta)\dd\mu^0,
\]
namely
\[
\int_{\bR^d}\mathcal{P}_\eta\dd\hat\mu=\int_{\bR^d}\mathcal{P}_\eta\dd \mu^0\,.
\]
Remark that $\mathcal{P}_{\eta}$ satisfies
\[
\int_{\bR^d}\mathcal{P}_\eta\dd\mu^0=\|\mu^0\|_1\,,
\]
and the Hölder inequality leads to
\[
\int_{\bR^d}\mathcal{P}_\eta\dd\hat\mu\leq \|\mathcal{P}_{\eta}\|_{\infty} \|\hat\mu\|_1 = \|\hat\mu\|_1 \,.
\]
From the definition of $\hat\mu$, one also has $\|\hat\mu\|_1\leq \|\mu^0\|_1$. Putting everything together, we deduce that
\[
\|\hat\mu\|_1=\int_{\bR^d}\mathcal{P}_\eta\dd\hat\mu=\|\mu^0\|_1\,.
\]
Since $\mathcal{P}_\eta$ is continuous and strictly lower than one outside of the support of $\mu^0$, we deduce from the above equality that the support of $\hat\mu$ is included in 
the support of~$\mu^0$:
$$
\mathrm{Supp}(\hat{\mu}) \subset \mathrm{Supp}(\mu^0)=S^0\,.
$$
\underline{Step 2: Identifiability and conclusion.}
 We prove that $\{\varphi(\cdot-t_1),\ldots,\varphi(\cdot-t_K)\}$ spans a vector subspace of $\cC_0(\bR^d,\bR)\cap L^1(\bR^d)$ of dimension $K$.
 This proof is standard and relies on a Vandermonde argument. We assume first that $K$ coefficients $x_1,\ldots,x_K\in\bR$ exist such that:
\[
\sum_{k=1}^K
x_k\varphi(\cdot-t_k)=0\,.
\]
Applying the Fourier transform and using $\varphi=\cF^{-1}[\sigma]$, we deduce that:
\[
 \sigma(u) \sum_{k=1}^K x_ke^{\imath u^\top t_k} =0\,,\quad \forall u\in\bR^d\,.
\]
Since $\sigma$ is nonzero, there exists an open set $\Omega\subseteq\bR^d$ such that $\sigma>0$ on $\Omega$. We deduce that:
\[
\sum_{k=1}^K x_ke^{\imath u^\top t_k}=0\,,\quad \forall u\in\Omega\,.
\]
Now, we can choose some points $u_j$ in $\Omega$ so that the Fourier matrix with entries $(e^{\imath u_j^\top t_k^0})_{kj}$ is invertible. It implies that $x_k=0$ and  $\{\varphi(\cdot-t_1),\ldots,\varphi(\cdot-t_K)\}$ spans a subspace of dimension~$K$. 

We now conclude the proof. We know from Step 1 that
$$
\hat{\mu} = \sum_{k=1}^K x_k \delta_{t_k}.
$$
Since $\hat{\mu}$ and $\mu^0$ belong to $\cM(f^0)$, then
$$
\sum_{k=1}^K x_k \varphi(.-t_k) = \sum_{k=1}^K a_k^0 \varphi(.-t_k),
$$
which in turn implies that $x_k = a_k^0$ for all $k \in \{1,\ldots,K\}$, namely $\hat{\mu}=\mu^0$.
\end{proof}

\section{Primal-Dual problems and duality gap}
\label{proof:sol_discrete}


\begin{proof}[Proof of Theorem~6] 
We consider some primal variables $\mu\in\cM(\bR^d,\bR)$ and $z\in\bbL$ and introduce the dual variable $c\in\bbL$. The  Lagrangian is given in Equation~\eqref{eq:lagrangian}
\eq
\label{eq:lagrangian}
\cL(\mu,z,c):=
\frac12\| L\hat f_n - z \|_{\bbL}^2 + \kappa \| \mu \|_1
-\langle c,L\circ\Phi \mu-z\rangle_{\bbL}
\,,
\qe
 and we consider the dual problem~\eqref{eq:blasso_dual}.
 
 \medskip

\noindent
\underline{Proof of $i)$.}
The existence of some solutions $\hat \mu_n$ to the primal problem~\eqref{eq:blasso}
is obtained with the help of a standard argument: we prove that the primal objective function is a proper lower semi-continuous (for the weak-* topology) convex function on the Banach space~$ \mathcal{M}(\bR^d,\bR)$. 

We now consider the ``invariant property'' related to the solutions of~\eqref{eq:blasso}. The norm~$\|\cdot\|_\bbL$ satisfies
\eq
\label{eq:strict_coercive}
\forall a,b\in\bbL\,,\quad \frac{\|a\|_\bbL^2+\|b\|_\bbL^2}2-\Big\|\frac{a+b}2\Big\|_\bbL^2=\frac{\|a-b\|_\bbL^2}4\,.
\qe
Now consider two primal solutions $\mu_1$ and $\mu_2$ and define $\tilde\mu=(\mu_1+\mu_2)/2$. Using~\eqref{eq:strict_coercive} and the triangle inequality for $\|.\|_1$, one has:
\begin{align*}
\frac12\| L\hat f_n - L\circ\Phi \tilde\mu \|_{\bbL}^2 + \kappa \| \tilde\mu \|_1
&
\leq\frac12\| L\hat f_n - L\circ\Phi \tilde\mu \|_{\bbL}^2 + \kappa \frac{\| \mu_1 \|_1+\| \mu_2 \|_1}2\\
&
\leq \frac{\frac12\| L\hat f_n - L\circ\Phi \mu_1 \|_{\bbL}^2 + \kappa \| \mu_1 \|_1}2\\
&\quad +\frac{\frac12\| L\hat f_n - L\circ\Phi \mu_2 \|_{\bbL}^2 + \kappa \| \mu_2 \|_1}2\\
&\quad -\frac18\|L\circ\Phi \mu_1-L\circ\Phi \mu_2\|_\bbL^2\,.
\end{align*}
But, remind that:
\[
\frac12\| L\hat f_n - L\circ\Phi \mu_1 \|_{\bbL}^2 + \kappa \| \mu_1 \|_1=\frac12\| L\hat f_n - L\circ\Phi \mu_2 \|_{\bbL}^2 + \kappa \| \mu_2 \|_1
=\min\left\{ \frac12\| L\hat f_n - L\circ\Phi \mu \|_{\bbL}^2 + \kappa \| \mu \|_1\right\}.
\]
We then conclude that 
 $\tilde\mu$ is also a solution to the primal problem and that $L\circ\Phi \mu_1=L\circ\Phi \mu_2$. We can repeat this argument for any pair of primal solutions so that the quantity $\hat z_n:=L\circ\Phi \hat \mu_n$ is uniquely defined and does not depend on the choice of the primal solution point $\hat\mu_n$. It also implies that $\hat m_n:=\|\hat\mu_n\|_1$ is uniquely defined (does not depend on the choice of the primal solution point).
 
  \medskip

\noindent
\underline{Proof of $ii)$.}
We shall write the dual program~\eqref{eq:blasso_dual}
as follows: consider dual variable $c$ and write:
\[
\inf_{\mu,z}\cL(\mu,z,c)=\inf_{\mu,z}
\Big\{
\underbrace{\frac12\| L\hat f_n - z \|_{\bbL}^2+\langle c,z\rangle_{\bbL}}
_{\raisebox{.5pt}{\textcircled{\raisebox{-.9pt} {1}}}}
+
\underbrace{ \kappa \| \mu \|_1-\langle c,L\circ\Phi \mu\rangle_{\bbL}}
_{\raisebox{.5pt}{\textcircled{\raisebox{-.9pt} {2}}}}
\Big\}\,,
\]
and the previous infimum appears to be splitted in terms of the influence of $z$ and $\mu$.
Optimizing in $z$ the first term \raisebox{.5pt}{\textcircled{\raisebox{-.9pt} {1}}} leads to  $z=L \hat{f}_n-c$ so that:
\eq
\label{e:yoyoyo}
\inf_{z}\raisebox{.5pt}{\textcircled{\raisebox{-.9pt} {1}}}=
\langle c, L\hat f_n\rangle_{\bbL} - \frac{1}{2} \| c\|_{\bbL}^2=
\frac12\big(\| L\hat f_n\|_{\bbL}^2-\| L\hat f_n-c\|_{\bbL}^2\big)\,.
\qe
The second term \raisebox{.5pt}{\textcircled{\raisebox{-.9pt} {2}}} is more intricate. 
Observe that:
\begin{align*}
\int_{\bR^d}\int_{\bR^d}\Big| c(s)\varphi(s-u)\mathrm d\mu(u)\Big|\dd s
& \leq \int_{\bR^d}\int_{\bR^d}\| c\|_\infty\varphi(s-u)\dd |\mu|(u)\mathrm d s\,,\\
& =\| c\|_\infty\|\varphi\|_1\|\mu\|_1=\| c\|_\infty\|\mu\|_1<\infty\,,
\end{align*}
and the Fubini yields:
\begin{align}
\langle c,(L\circ\Phi )\mu\rangle_{\bbL}
& = \langle c(\cdot),\int_{\bR^d}\lambda(\cdot-s)(\Phi \mu)(s)\dd s\rangle_{\bbL}
 = \int_{\bR^d}\langle c(\cdot),\lambda(\cdot-s)\rangle_{\bbL}(\Phi \mu)(s)\dd s\,,\notag\\
& = \int_{\bR^d} c(s)(\Phi \mu)(s)\dd s
= \int_{\bR^d} c(s)\Big(\int_{\bR^d}\varphi(s-u)\dd\mu(u)\Big)\dd s\,,\notag\\
& = \int_{\bR^d} \Big(\int_{\bR^d} c(s)\varphi(u-s)\dd s\Big)\dd\mu(u)\,,\notag\\
&=\int_{\bR^d} \Phi c\,\dd\mu\,.
\label{eq:Fubini}
\end{align}
We deduce that:
\[
\raisebox{.5pt}{\textcircled{\raisebox{-.9pt} {2}}}=\kappa \| \mu \|_1-\int_{\bR^d}\Phi c\,\mathrm d\mu\,.
\] 
We  use the $L^1-L^\infty$  H\"older inequality, namely $\int_{\bR^d}\Phi c\,\mathrm d\mu\leq\|\Phi c\|_\infty\|\mu\|_1$, which yields:
\[
\inf_{\mu}\raisebox{.5pt}{\textcircled{\raisebox{-.9pt} {2}}} \geq \inf_{\mu} [\kappa -\|\Phi c\|_\infty] \|\mu\|_1 = 
[\kappa -\|\Phi c\|_\infty] 
\inf_{\mu} \|\mu\|_1 \,.
\]
Hence, we conclude that:
\eq
\label{eq:term2}
\inf_{\mu}\raisebox{.5pt}{\textcircled{\raisebox{-.9pt} {2}}} 
=-\bI_{\{\|\Phi c\|_\infty\leq \kappa\}}(c)\,,
\qe
where $\bI_{\{\|\Phi c\|_\infty\leq \kappa\}}(c)$ is the constraint $\|\Phi c\|_\infty\leq \kappa$, namely it is $0$ if $c$ such that $\|\Phi c\|_\infty\leq \kappa$ and $\infty$ otherwise. Finally, we obtain that for a fixed dual variable $c$:
\[
\inf_{\mu,z}\cL(\mu,z,c)=\frac12\Big(\| L\hat f_n\|_{\bbL}^2-\| L\hat f_n-c\|_{\bbL}^2\Big)-\bI_{\{\|\Phi c\|_\infty\leq \kappa\}}(c)\,.
\]
Hence, the dual problem~\eqref{eq:blasso_dual} shall be written as
\begin{align*}
\eqref{eq:blasso_dual} \Longleftrightarrow
\sup_{c}\inf_{\mu,z}\cL(\mu,z,c)&=\sup_{c}\Big\{\frac12\Big(\| L\hat f_n\|_{\bbL}^2-\| L\hat f_n-c\|_{\bbL}^2\Big)-\bI_{\{\|\Phi c\|_\infty\leq \kappa\}}(c)\Big\}\,,\\
&=\frac{\| L\hat f_n\|_{\bbL}^2}2-\inf_{c\,:\  \|\Phi c\|_\infty\leq \kappa\ }
\Big\{\frac12\| L\hat f_n-c\|_{\bbL}^2\Big\}\,.
\end{align*}
Here again the dual objective function is lower semi-continuous and coercive on the Hilbert space $\bbL$ so a minimizer $\hat c$ exists. Again, Inequality~\eqref{eq:strict_coercive} implies the uniqueness of $\hat c$. 

To prove that there is no duality gap, we use the Slater condition: we remark that a feasible point $(c^\circ)$ exists in the interior of the constrained set $\{\|\Phi c\|_\infty\leq \kappa\}$. Now, the {\it generalized Slater condition} shall be used (see \textit{e.g.} \cite{rockafellar1974conjugate}). Indeed, given any nonzero $c\in\bbL\subseteq\cC_0(\bR^d,\bR)$, note that the convolution operator satisfies $\|\Phi c\|_\infty\leq\| c\|_\infty$. Hence, we set $c^\circ=\kappa c/(2\|c\|_\infty)$ and these points are in the interior of the constrained set. The generalized Slater condition implies that strong duality holds, and there is no duality gap (i.e., strong duality holds):
$$
\eqref{eq:blasso} =~\eqref{eq:blasso_dual}.
$$ 
Furthermore, note that $\hat z_n:=L\circ\Phi \hat \mu_n$ and $z_n=L \hat{f}_n-\hat c$ from~\eqref{e:yoyoyo} and strong duality, we deduce that 
\[
\hat c=L \hat{f}_n-L\circ\Phi \hat \mu_n\,.
\]

 \medskip

\noindent
\underline{Proof of $iii)$.}
We consider the unique $\hat c$ solution to 
\[
\hat c=\arg\min_{c,\in\bbL\,:\  \|\Phi c\|_\infty\leq \kappa\ }
\Big\{\frac12\| L\hat f_n-c\|_{\bbL}^2\Big\}\,,
\]
and the strong duality implies that:
\[
0=\kappa \| \hat \mu \|_1-\langle \hat c,L\circ\Phi \hat \mu\rangle_{\bbL}
=\kappa \| \hat \mu \|_1-\int_{\bR^d}\Phi \hat c\,\dd\hat \mu\,.
\]
Since $\Phi \hat c$ is continuous, we verify, using the argument of  Lemma~A.1  in \cite{DeCastro_Gamboa_12}, that: 
\[
\mathrm{Supp}(\hat \mu)\subseteq\Big\{x\in\bR^d\ :\ \big|\Phi \hat c\big|(x)=\kappa\Big\}\,,
\]
where we recall that $\Phi \hat c\in L^\infty(\bR^d)$ is such that its supremum norm is less than $\kappa$.

\medskip

\noindent
\underline{Proof of $iv)$.}
The last point is a consequence of the Schwartz-Paley-Wiener Theorem~(see \textit{e.g.} Theorem~XVI, chapter VII in \cite[Page 272]{schwartz1957theorie}). Indeed, note that $\Phi\hat c$ is a continuous function whose inverse Fourier transform has a support included in the support of $\sigma\times \Lambda$. By assumption, this latter is bounded and one may apply the {\it Schwartz-Paley-Wiener Theorem}: we deduce that~$\Phi\hat c$ can be extended to complex values $\mathds{C}^d$ into an analytic entire function of exponential type. In particular, $\Phi \hat c\pm\kappa$ has isolated zeros one the real line, which concludes the proof.
\end{proof}

\section{Support stability}
\label{s:stability}

\begin{proof}[Proof of  Theorem~11]

We follow the ideas of \cite{Duval_Peyre_JFOCM_15} for the proof of Theorem~11. 
 Consider the convex program
\eq
\label{eq:P0m}
\tag{$\mathbf P_0$}
\inf\Big\{\|\mu\|_{1}\ :\ \mu\in\mathcal M(\mathds R^d;\mathds R)\ \text{s.t.}\ L\circ\Phi\mu=L\circ\Phi\mu^0\Big\}
\qe
whose Lagrangian expression is, for all $\mu\in\mathcal M(\mathds R^d,\mathds R)$, $c\in\mathds L$, 
\begin{align*}
\mathcal L(\mu,c)&=\|\mu\|_1+\langle c,L\circ\Phi(\mu^0-\mu)\rangle_{\mathds L}\,,\\
& = \|\mu\|_1+\langle c,L\circ\Phi\mu^0\rangle_{\mathds L}-\int\Phi c\,\dd\mu\,,\\
& = \|\mu\|_1-\int\Phi c\,\dd\mu+\int\Phi c\,\dd\mu^0\,,
\end{align*}
using~\eqref{eq:Fubini} in the last equation. Now, Equation~\eqref{eq:term2} yields that the dual program is:
\eq
\notag
\sup\Big\{\int_{\mathds R^d}\Phi c \,\dd\mu^0\ :\ c\in\mathds L\ \text{s.t.}\ \|\Phi c\|_\infty\leq1\Big\}\,.
\qe
Note also that the objective function of the dual program satisfies:
\[
\int_{\mathds R^d}\Phi c\,\dd\mu^0=\langle c,L\circ\Phi\mu^0\rangle_{\mathds L}\,,
\]
which gives the following equivalent formulation of the dual:
\eq
\label{eq:D0m}
\tag{$\mathbf D_0$}
\sup\Big\{\langle c,L\circ\Phi\mu^0\rangle_{\mathds L}\ :\ c\in\mathds L\ \text{s.t.}\ \|\Phi c\|_\infty\leq1\Big\}\,.
\qe
Note that the dual certificate $\Pm$ exists, then we know that $\mu^0$ is a solution to~\eqref{eq:P0m} by Theorem~3. 
As in Section~\ref{proof:sol_discrete}, we use the Slater condition to prove that there is no duality gap: we remark that a feasible point $c$ exists in the interior of the set $\{\|\Phi c\|_\infty\leq 1\}$. Now, the generalized Slater condition shall be used (see \textit{e.g.} \cite{rockafellar1974conjugate}). We get that any solution~$c$ to~\eqref{eq:D0m} satisfies that $\Phi c$ is a sub-gradient of the total variation norm at point $\mu^0$. 
We recall  the definition of condition~\eqref{eq:NDSCB}.
\eq
\label{eq:NDSCB}
\mathcal P_0\text{ exists}\,,\quad
\forall t\in\mathds F(r)\,,\  |\mathcal P_0(t)|<1-q\,,\quad
\forall t\in\mathds N(r)\,,\  \nabla^2 \mathcal P_0(t)\prec -\rho\,\mathrm{Id}_d\,,
\tag{NDB}
\qe
where $\rho>0$. Under condition~\eqref{eq:NDSCB}, we know that $\mathcal P_0:=\Phi c_0$ is a solution to~\eqref{eq:D0m}.

\medskip

\noindent
Consider also the following convex program:
\eq
\label{eq:Pkm}
\tag{$\mathbf P_\kappa(\Phi\mu^0)$}
\inf_{\mu \in \mathcal{M}(\bR^d,\bR)} 
\left\lbrace \frac12\| L\circ \Phi\mu^0 - L\circ\Phi \mu \|_{\bbL}^2 + \kappa \| \mu \|_1 \right\}\,,
\qe
which is the same as the one used in Section~\ref{proof:sol_discrete} and Theorem~6, 
exchanging $L\hat f_n$ by $ L\circ \Phi\mu^0$. Following the arguments  used in   Section~\ref{proof:sol_discrete}, one may prove that there is no duality gap and the dual program is given by:
\eq
\label{eq:Dkm}
\tag{$\mathbf D_\kappa(\Phi\mu^0)$}
\frac{\| L\circ \Phi\mu^0\|_{\bbL}^2}2-\kappa\inf
\Big\{\frac\kappa2\Big\| \frac{L\circ \Phi\mu^0}k-c\Big\|_{\bbL}^2\,:\ c\text{ s.t. }\|\Phi c\|_\infty\leq 1\Big\}.\,
\qe

We denote by $c_\kappa$ the solution to~\eqref{eq:Dkm} (unicity can be proven by~\eqref{eq:strict_coercive}) and by $\mathcal P_\kappa:=\Phi c_\kappa$ the dual polynomial. Its gradient is denoted by $\nabla\mathcal P_\kappa$, and its Hessian is denoted by $\nabla^2\mathcal P_\kappa$. We first state the next lemma.
\begin{lemma}
If $c_0$ exists, then $\|c_\kappa-c_0\|_\bbL\to0$, $\nabla\mathcal P_\kappa\to\nabla\mathcal P_0$ uniformly, and $\nabla^2\mathcal P_\kappa\to\nabla^2\mathcal P_0$ uniformly as $\kappa\to0$.
\end{lemma}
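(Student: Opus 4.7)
The plan is to recognise $(\mathbf{D}_\kappa(\Phi\mu^0))$ as a Tikhonov regularisation of $(\mathbf{D}_0)$. Indeed, expanding the square in the latter shows that $c_\kappa$ equivalently maximises
\[
J_\kappa(c) := \langle c, L\circ\Phi\mu^0\rangle_\bbL - \tfrac{\kappa}{2}\|c\|_\bbL^2
\]
over the feasible set $\mathcal K := \{c \in \bbL : \|\Phi c\|_\infty \leq 1\}$, while $c_0$ is the minimum-norm maximiser of the linear functional $J_0(c) := \langle c, L\circ\Phi\mu^0\rangle_\bbL$ over $\mathcal K$. I will first prove $\|c_\kappa - c_0\|_\bbL \to 0$ by a compactness/uniqueness argument, then promote this to uniform convergence of $\partial^\alpha \mathcal P_\kappa$ for $|\alpha| \leq 2$ via the reproducing-kernel representation $\Phi c(t) = \langle c, L(\varphi(t-\cdot))\rangle_\bbL$ obtained from~(22).

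For the norm convergence, I compare optimal values: optimality of $c_\kappa$ for $J_\kappa$ with feasibility of $c_0$ gives $J_\kappa(c_\kappa) \geq J_\kappa(c_0)$; optimality of $c_0$ for $J_0$ with feasibility of $c_\kappa$ gives $J_0(c_\kappa) \leq J_0(c_0)$. Chaining these inequalities yields $\|c_\kappa\|_\bbL \leq \|c_0\|_\bbL$, so $(c_\kappa)_{\kappa>0}$ is weakly pre-compact in $\bbL$. For each $t \in \bR^d$, $c \mapsto \Phi c(t) = \langle c, L(\varphi(t-\cdot))\rangle_\bbL$ is a weakly continuous linear form on $\bbL$, hence $\mathcal K$ is an intersection of weakly closed half-spaces and is therefore weakly closed. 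Let $c^*$ be any weak cluster point of $(c_\kappa)$ as $\kappa \to 0$. Then $c^* \in \mathcal K$, and passing to the limit in $J_\kappa(c_\kappa) \geq J_\kappa(c_0)$ (weak continuity of the linear part, weak lower semi-continuity of the squared norm) shows that $c^*$ solves $(\mathbf{D}_0)$ and $\|c^*\|_\bbL \leq \|c_0\|_\bbL$. Strict convexity of $\|\cdot\|_\bbL$ on the convex solution set of $(\mathbf{D}_0)$ forces $c^* = c_0$, so $c_\kappa \rightharpoonup c_0$ and $\|c_\kappa\|_\bbL \to \|c_0\|_\bbL$, giving strong convergence in the Hilbert space $\bbL$.

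For the uniform convergence of derivatives, using the reproducing identity $c(s) = \langle c, \lambda(s-\cdot)\rangle_\bbL$ under the integral (justified by Fubini, since $\nabla\varphi$ and $\nabla^2\varphi$ are integrable) gives, for every multi-index $\alpha$ with $|\alpha| \leq 2$,
\[
\partial^\alpha \mathcal P_\kappa(t) = \langle c_\kappa, h_t^\alpha\rangle_\bbL, \qquad h_t^\alpha(x) := \int_{\bR^d} (\partial^\alpha \varphi)(t-s)\,\lambda(s-x)\,\dd s.
\]
Cauchy-Schwarz then yields $\|\partial^\alpha \mathcal P_\kappa - \partial^\alpha \mathcal P_0\|_\infty \leq \|c_\kappa - c_0\|_\bbL \cdot \sup_t \|h_t^\alpha\|_\bbL$. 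A direct Fourier computation using $\cF[\partial^\alpha \varphi](\xi) = (\imath\xi)^\alpha \sigma(\xi)$ and the pure-phase factor produced by the translation in $t$ gives
\[
\|h_t^\alpha\|_\bbL^2 = \int_{\bR^d} \xi^{2\alpha}\,\sigma(\xi)^2\,\cF[\lambda](\xi)\,\dd \xi,
\]
which is manifestly independent of $t$ and finite for $|\alpha| \leq 2$ under the regularity assumed on $\varphi$ and $\lambda$ (the case $\alpha = 0$ being the quantity already used to bound $\|c_0\|_\bbL^2$ in Proposition~\ref{prop:up}). Combined with the strong $\bbL$-convergence of the previous step, this simultaneously delivers the three claimed convergences.

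The main obstacle is the weak-limit step: establishing weak closedness of $\mathcal K$ and transferring optimality and minimum-norm property to the limit point. Both are handled once the RKHS representation of $\Phi c(t)$ as a bounded linear functional of $c$ is in hand, after which everything reduces to a standard Hilbert-space Tikhonov argument; the derivative part is then a bounded-linear-functional estimate exploiting translation invariance of $\|h_t^\alpha\|_\bbL$.
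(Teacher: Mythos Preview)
Your proof is correct and follows essentially the same approach as the paper's: both recognise $c_\kappa$ as the Tikhonov regularisation of the minimum-norm dual solution $c_0$, compare the two optimality inequalities to get $\|c_\kappa\|_\bbL\leq\|c_0\|_\bbL$, extract a weak cluster point in the Hilbert ball, identify it with $c_0$ by the minimum-norm uniqueness, and upgrade to strong convergence via norm convergence; for the derivatives both apply Cauchy--Schwarz in $\bbL$ against a fixed element independent of $\kappa$. Your treatment is slightly more explicit than the paper's in two places---you spell out why $\mathcal K$ is weakly closed (via the bounded linear form $c\mapsto\Phi c(t)$), and you compute the Fourier expression $\int\xi^{2\alpha}\sigma(\xi)^2\cF[\lambda](\xi)\,\dd\xi$ for $\|h_t^\alpha\|_\bbL^2$ to exhibit its $t$-independence, whereas the paper simply writes $\sup_{i,j}\|\partial^2\varphi/(\partial x_i\partial x_j)\|_\bbL$---but these are presentational refinements of the same argument, not a different route.
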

\begin{proof}
Since $c_\kappa$ is a solution to~\eqref{eq:Dkm}, it holds that:
\[
\frac\kappa2\Big\| \frac{L\circ \Phi\mu^0}k-c_\kappa\Big\|_{\bbL}^2
\leq
\frac\kappa2\Big\| \frac{L\circ \Phi\mu^0}k-c_0\Big\|_{\bbL}^2\,,
\]
leading to:
\eq
\label{yoyoweak}
\langle c_\kappa,L\circ\Phi\mu^0\rangle_{\mathds L}-\frac\kappa2\|c_\kappa\|_{\mathds L}^2
\geq\langle c_0,L\circ\Phi\mu^0\rangle_{\mathds L}-\frac\kappa2\|c_0\|_{\mathds L}^2
\,,
\qe
and $c_0$ being a solution to~\eqref{eq:D0m} implies that:
\[
\langle c_\kappa,L\circ\Phi\mu^0\rangle_{\mathds L}
\leq\langle c_0,L\circ\Phi\mu^0\rangle_{\mathds L}
\,.
\]
We deduce that $\|c_\kappa\|_{\bbL}\leq\|c_0\|_{\bbL}$. Closed unit balls of Hilbert spaces being weakly sequentially compact we deduce that  given $\kappa_n\to 0$, one shall extract a subsequence such that $c_{\kappa_n}$ weakly converges to some $c^*\in\bbL$. Taking the limit as $\kappa\to 0$ in~\eqref{yoyoweak}, we obtain that:
\[
\langle c^*,L\circ\Phi\mu^0\rangle_{\mathds L}
\geq\langle c_0,L\circ\Phi\mu^0\rangle_{\mathds L}\,.
\]

Note that $\Phi c_{\kappa_n}$ converges weakly to $\Phi c^*$ so that:
\[
\|\Phi c^*\|_\infty\leq\lim\inf_n\|\Phi c_{\kappa_n}\|_\infty\leq1
\]
We deduce that $c^*\in\mathds L$ is a solution to~\eqref{eq:D0m} and hence:
\[
\|\Phi c^*\|_{\infty}\leq 1\text{ and }(\Phi c^*)(t_k)=1\,,\ k\in[K]\,.
\]
Furthermore, $c^*$ is the solution of minimal norm since:
\[
\|c^*\|_{\bbL}\leq\lim\inf_n\|c_{\kappa_n}\|_{\bbL}\leq \|c_0\|_{\bbL}\,.
\]
The solution of minimal norm is unique by strict coercivity of the norm $\|\cdot\|_{\mathds L}$, see~\eqref{eq:strict_coercive}. We deduce that $c^*=c_0$, $\|c_{\kappa_n}\|_{\mathds L}\to \|c_0\|_{\mathds L}$, and $c_{\kappa_n}\to c_0$ strongly in ${\mathds L}$. Note that it implies that $\lim_{\kappa\to0}\|c_\kappa-c_0\|_\bbL=0$, since otherwise one can extract a subsequence $c_{\kappa_n}$ such that $\|c_{\kappa_n}-c_0\|_\bbL>\varepsilon$, and by the above argument, one can extract a sequence such that $c_{\kappa_n}\to c_0$.

Now, the Cauchy-Schwarz inequality yields:
\[
\forall t\in\mathds R^d,\quad\|\nabla^2\mathcal P_\kappa(t)-\nabla^2\mathcal P_0(t)\|_\infty\leq(\sup_{i,j}\|\partial^2\varphi/(\partial x_i\partial x_j)\|_\bbL)\|c_{\kappa}-c_0\|_\bbL\,,
\]
which proves the uniform convergence. The same computation gives the uniform convergence of the functions and their gradients.
\end{proof}
We denote by $c_{\kappa,n}$ the dual solution of~\eqref{eq:blasso} , namely:
\eq
\label{eq:Dkmn}
\tag{$\mathbf D_\kappa(\hat f_n)$}
\frac{\| L\hat f_n\|_{\bbL}^2}2-\kappa\inf
\Big\{\frac\kappa2\Big\| \frac{L\hat f_n}\kappa-c\Big\|_{\bbL}^2\,:\ c\text{ s.t. }\|\Phi c\|_\infty\leq 1\Big\}\,
\qe
and $\mathcal P_{\kappa,n}=\Phi c_{\kappa,n}$. The primal solution is denoted by $\hat \mu_n$.

\begin{lemma}
Let $0<t\leq r$ and assume \eqref{eq:NDSCB}. If $\kappa$ and $\| \vn \|_\bbL/\kappa$ are sufficiently small, any solution $\hat \mu_n$  has support of size $\hat K=K$ with one and only one spike in each near region~$\mathds N_k(t)$ for $k\in[K]$.
\end{lemma}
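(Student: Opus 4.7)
The plan is to run a Duval--Peyr\'e type perturbation argument: approximate the dual polynomial $\mathcal P_{\kappa,n}=\Phi c_{\kappa,n}$ by the minimal norm certificate $\mathcal P_0$ in $C^2$, then read the support structure of $\hat\mu_n$ off the interpolation/concavity properties of $\mathcal P_0$ given by \eqref{eq:NDSCB}.

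First I would show the quantitative bound
\[
\|c_{\kappa,n}-c_\kappa\|_{\mathds L}\;\lesssim\;\frac{\|\Gamma_n\|_{\mathds L}}{\kappa}\,,
\]
by comparing the two strictly convex dual problems~\eqref{eq:Dkm} and~\eqref{eq:Dkmn}, which share the same feasible set $\{\|\Phi c\|_\infty\le 1\}$ and differ only in the target~$L\hat f_n=L\circ\Phi\mu^0+\Gamma_n$. The strict coercivity identity~\eqref{eq:strict_coercive} applied to the two optimality inequalities (testing $c_\kappa$ against \eqref{eq:Dkmn} and $c_{\kappa,n}$ against \eqref{eq:Dkm}) gives $\tfrac{\kappa}{4}\|c_{\kappa,n}-c_\kappa\|_{\mathds L}^2\le \langle \Gamma_n,c_{\kappa,n}-c_\kappa\rangle_{\mathds L}$, whence the bound by Cauchy--Schwarz. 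Combined with the previous lemma's $\|c_\kappa-c_0\|_{\mathds L}\to0$, this gives $\|c_{\kappa,n}-c_0\|_{\mathds L}\to0$ whenever $\kappa$ and $\|\Gamma_n\|_{\mathds L}/\kappa$ are small. Via the Cauchy--Schwarz trick already used in the preceding lemma ($\|\partial^\alpha\Phi c\|_\infty\le\|\partial^\alpha\varphi\|_{\mathds L}\|c\|_{\mathds L}$ for $|\alpha|\le 2$), this yields uniform convergence of $\mathcal P_{\kappa,n}$, $\nabla\mathcal P_{\kappa,n}$ and $\nabla^2\mathcal P_{\kappa,n}$ to their $\mathcal P_0$ counterparts.

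Next I would localize using~\eqref{eq:NDSCB}. On the far region $\mathds F(t)\supset\mathds F(r)$, we have $|\mathcal P_0|<1-q$, so for $\kappa,\|\Gamma_n\|_{\mathds L}/\kappa$ small enough $|\mathcal P_{\kappa,n}|\le 1-q/2<1$; since $\mathrm{Supp}(\hat\mu_n)\subseteq\{|\mathcal P_{\kappa,n}|=1\}$ by Theorem~6 $iii)$, no spike of $\hat\mu_n$ lies in $\mathds F(t)$. On each near region $\mathds N_k(t)\subset\mathds N_k(r)$, $\nabla^2\mathcal P_0\prec-\rho\,\mathrm{Id}_d$ and $\nabla\mathcal P_0(t_k)=0$ (as $t_k$ is a maximum with $\mathcal P_0(t_k)=1=\|\mathcal P_0\|_\infty$). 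The $C^2$-convergence implies $\nabla^2\mathcal P_{\kappa,n}\prec-\tfrac\rho2\mathrm{Id}_d$ on $\mathds N_k(t)$ and, by the implicit function theorem applied to $\nabla\mathcal P_{\kappa,n}=0$, a unique critical point $\hat t_k\in\mathds N_k(t)$ close to $t_k$. Strict concavity makes $\hat t_k$ the unique maximizer of $\mathcal P_{\kappa,n}$ on $\mathds N_k(t)$, so $\{|\mathcal P_{\kappa,n}|=1\}\cap\mathds N_k(t)\subset\{\hat t_k\}$.

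It remains to show each $\hat t_k$ actually lies in $\mathrm{Supp}(\hat\mu_n)$ and that $\hat\mu_n$ does not reduce to fewer spikes. For this I would exhibit a candidate $\tilde\mu=\sum_{k=1}^K \tilde a_k\delta_{\hat t_k}$ solution of the finite-dimensional LASSO on the fixed grid $\{\hat t_k\}$, check the primal--dual optimality $\mathcal P_{\kappa,n}(\hat t_k)=1$ (following from the KKT conditions, since $\nabla\mathcal P_{\kappa,n}(\hat t_k)=0$ forces $\mathcal P_{\kappa,n}(\hat t_k)$ to be a local extremum of value $\pm 1$, and positivity of the scheme rules out $-1$), and invoke the uniqueness statement of Theorem~6 $i)$ (strict convexity via~\eqref{eq:strict_coercive}) together with the conclusion that the support of any primal solution must be exactly these $K$ points. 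The main obstacle is the first quantitative perturbation step: controlling $c_{\kappa,n}-c_\kappa$ in the $\mathds L$-norm requires exploiting strict convexity of the dual objective even though the constraint $\|\Phi c\|_\infty\le1$ is only a supremum-norm constraint, and the passage from an $\mathds L$-norm estimate to a $C^2$-estimate on $\mathcal P_{\kappa,n}$ must be uniform in $n$.
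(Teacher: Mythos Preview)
Your first two ingredients---the perturbation bound $\|c_{\kappa,n}-c_\kappa\|_{\mathds L}\lesssim\|\Gamma_n\|_{\mathds L}/\kappa$ and the resulting $C^2$-closeness of $\mathcal P_{\kappa,n}$ to $\mathcal P_0$, then the ``at most one extremum per near ball'' conclusion from strict concavity---are correct and match the paper's argument (the paper simply says that~\eqref{eq:Dkm} and~\eqref{eq:Dkmn} are projections onto the same closed convex set, which is exactly your strict coercivity computation).

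The gap is in your existence step. You claim that because $\hat t_k$ is a critical point of $\mathcal P_{\kappa,n}$, the value $\mathcal P_{\kappa,n}(\hat t_k)$ must be $\pm 1$. This is false: being a critical point only says $\nabla\mathcal P_{\kappa,n}(\hat t_k)=0$; nothing forces the value to attain the constraint bound $1$. The implication goes the other way: \emph{if} $\hat t_k\in\mathrm{Supp}(\hat\mu_n)$ then $|\mathcal P_{\kappa,n}(\hat t_k)|=1$, but that is precisely what you are trying to show. Your proposed construction of a candidate $\tilde\mu$ on the grid $\{\hat t_k\}$ therefore cannot be certified by $\mathcal P_{\kappa,n}$ without an additional argument linking the finite LASSO dual on that grid to the full BLASSO dual.

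The paper bypasses this entirely. Instead of constructing a candidate, it argues that $\hat\mu_n\to\mu^0$ in the weak-$*$ topology as $\kappa$ and $\|\Gamma_n\|_{\mathds L}/\kappa$ tend to zero (this follows from TV-boundedness of minimizers, weak-$*$ compactness, and uniqueness of $\mu^0$ as the solution of~\eqref{eq:P0m}). Then $\hat\mu_n(\mathds N_k(r))\to a_k^0>0$, so for small parameters each $\mathds N_k(r)$ carries positive mass, hence at least one spike. Combined with your ``at most one'' part this gives exactly one. If you want to salvage your constructive route, you would need the full Duval--Peyr\'e implicit function theorem argument on the joint KKT system $(a,t)\mapsto(\nabla_a,\nabla_t)$ of the finite-dimensional problem, showing it has a nondegenerate solution near $(a^0,t^0)$ whose associated dual polynomial satisfies the global constraint---not simply that $\mathcal P_{\kappa,n}$ has critical points near the $t_k$.
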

\begin{proof}
Note that~\eqref{eq:Dkmn} and~\eqref{eq:Dkm} are projection onto a closed convex set. We deduce that
\[
\|c_{\kappa,n}-c_{\kappa}\|_\bbL\leq\frac{\| \vn \|_\bbL}\kappa\,,
\]
and that $ \|\nabla^2\mathcal P_\kappa -\nabla^2\mathcal P_{\kappa,n}\|_\infty=\mathcal O(\frac{\| \vn \|_\bbL}\kappa)$ (the same result holds for the functions and their gradients). Under~\eqref{eq:NDSCB}, we know that there exists $0<q<1$, $r>0$ and $\rho>0$ such that $\nabla^2\mathcal P_0\prec  -\rho\mathrm{Id}_d$ on $\mathds N(r)$ and $|\mathcal P_0|<1-q$ on $\mathds F(r)$. We deduce that, for sufficiently small $\kappa$ and ${\| \vn \|_\bbL}/\kappa$, $\mathcal P_{\kappa,n}$ is such that $\nabla^2\mathcal P_{\kappa,n}\prec -(\rho/2)\mathrm{Id}_d$ on $\mathds N(r)$ and $|\mathcal P_{\kappa,n}|<1-q/2$ on $\mathds F(r)$. We deduce that at most $1$ point in each $\mathds N_k(r)$ is such that $\mathcal P_{\kappa,n}(\hat t_k)=1$. 

But, since $\mu^0$ is the unique solution of~\eqref{eq:P0m} (see Theorem~3), 
we deduce that $\hat\mu_n$ converges to~$\mu^0$ in the weak-*topology as $\kappa$ and ${\| \vn \|_\bbL}/\kappa$ go to zero. Hence, it holds that $\hat\mu_n(\mathds N_k(r))\to\mu^0(\mathds N_k(r))=a_k^0$. In particular, $\hat\mu_n$ has one spike in $\mathds N_k(r)$.

Now, by Taylor's theorem, observe that if \eqref{eq:NDSCB} with neighborhood size $r$ holds then it holds with neighborhood size~$t$.
\end{proof}


\noindent
It remains to bound $\| \vn \|_\bbL^2$, which is the purpose of the next lemma.

\begin{lemma} A large enough universal constant $C>0$ exists such that for any RKHS $\bbL$ associated to a nonnegative measure $\Lambda$:
\[
\| \vn \|_\bbL^2\leq C^2  \Lambda(\mathds R^d)\frac{\log(C/\alpha)}{n}
\]
\end{lemma}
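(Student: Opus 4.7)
My plan is to view $\vn$ as the empirical mean of i.i.d.\ bounded centered random elements in the Hilbert space $\bbL$, and then invoke a Hoeffding-type concentration inequality for sums of bounded vectors in a separable Hilbert space (e.g.\ Pinelis' inequality). The Hilbert-space structure is essential: in contrast to a naive pointwise bound, it keeps the dependence on the RKHS entirely through the scalar quantity $\lambda(0)=\Lambda(\mathds R^d)$.

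First, I would rewrite
\[
\vn \;=\; L\hat f_n - L\circ\Phi\mu^0 \;=\; \frac{1}{n}\sum_{i=1}^n Z_i,\qquad Z_i:=\lambda(\cdot-X_i)-\E\bigl[\lambda(\cdot-X_i)\bigr].
\]
Each $\lambda(\cdot-X_i)=\ell(\cdot,X_i)$ is a genuine element of $\bbL$ by the reproducing property, and since $X_i$ has density $f^0$ one has
\[
\E\bigl[\lambda(\cdot-X_i)\bigr](y) \;=\; \int\lambda(y-x)f^0(x)\,\dd x \;=\; (\lambda\star f^0)(y) \;=\; L\circ\Phi\mu^0(y),
\]
so the $Z_i$'s are centered elements of $\bbL$. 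Independence and identical distribution are inherited from $X_1,\dots,X_n$.

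Second, I would establish a uniform norm bound on each summand. By the reproducing property,
\[
\|\lambda(\cdot-X_i)\|_{\bbL}^2 \;=\; \ell(X_i,X_i) \;=\; \lambda(0) \;=\; \Lambda(\mathds R^d),
\]
so the triangle inequality (applied to the centering) gives $\|Z_i\|_{\bbL}\leq 2\sqrt{\Lambda(\mathds R^d)}$ almost surely.

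Third, I would apply a Hilbert-space Hoeffding inequality (Pinelis, 1994) to the i.i.d.\ centered sum in $\bbL$: for every $t>0$,
\[
\mathds P\!\left(\|\vn\|_{\bbL}\geq t\right) \;\leq\; 2\exp\!\left(-\frac{n\,t^{2}}{c\,\Lambda(\mathds R^d)}\right)
\]
for a universal constant $c>0$. Choosing $t$ so that the right-hand side equals $\alpha$ and squaring yields the stated bound $\|\vn\|_{\bbL}^2\leq C^2\Lambda(\mathds R^d)\log(C/\alpha)/n$ on an event of probability at least $1-\alpha$. Taking $\alpha = e^{-n^{1/2-r_\kappa}}$ then matches the probability estimate stated in Theorem~11.

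The only non-routine step is the invocation of the vector-valued concentration inequality; separability and measurability of the random elements are automatic here since $\lambda$ is continuous and the $X_i$'s are standard Borel random variables, so the classical Pinelis inequality (or equivalently a bounded differences argument applied to $\|\vn\|_\bbL$) applies directly.
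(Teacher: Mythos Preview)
Your proof is correct and reaches the same conclusion as the paper, but by a genuinely different (and arguably cleaner) route. The paper expands $\|\vn\|_\bbL^2=\frac{1}{n^2}\sum_i\|Z_i\|_\bbL^2+\frac{1}{n^2}\sum_{i\neq j}\langle Z_i,Z_j\rangle_\bbL$, bounds the diagonal deterministically by $\frac{2}{n}\Lambda(\mathds R^d)$, and then recognizes the off-diagonal part as a degenerate (canonical) $U$-statistic with bounded kernel $h(X_i,X_j)=\langle Z_i,Z_j\rangle_\bbL$, invoking the Arcones--Gin\'e exponential inequality to control it. You instead treat $\vn$ directly as an empirical mean of bounded i.i.d.\ centered elements in the separable Hilbert space $\bbL$ and apply Pinelis' Hoeffding-type inequality for $2$-smooth Banach spaces. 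Your route avoids the $U$-statistic machinery entirely and requires only the single scalar bound $\|Z_i\|_\bbL\leq 2\sqrt{\Lambda(\mathds R^d)}$; the paper's decomposition, on the other hand, isolates the second-order chaos explicitly, which can in principle yield sharper constants or be combined with variance information (Bernstein-type refinements), though that is not exploited here. Both arguments deliver the same $\Lambda(\mathds R^d)\log(C/\alpha)/n$ rate.
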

with probability at least $1-\alpha$. Or equivalently
\[
\forall u>0,\quad
\mathds P\big[\| \vn \|_\bbL^2\geq u v_n \big]\leq C\exp(-{u})\,,
\]
where $v_n:=\frac{C^2\Lambda(\mathds R^d)}n=\frac{C^2\lambda(0)}n$.
\begin{proof}
Let $X$ be a random variable with density $f^0$, we observe that $\bbE_X L \delta_X=Lf^0$, and denote by $(Z_i)_{i \in [n]}$ the i.i.d. random variables:
\[
\forall i\in[n],\quad
Z_i:=L\delta_{X_i} - \bbE_X L \delta_X\,,
\]
which are i.i.d. centered random variables with values in $\bbL$. Observe that $\|L\delta_{X_i}\|_\bbL^2=\lambda(0)=\Lambda(\mathds R^d)$ by the representation property of RKHS and the definition of its spectral measure $\Lambda$. We deduce that 
\eq
\label{e:bound_yoyo}
\|Z_i\|_\bbL^2\leq 2 \Lambda(\mathds R^d)\,.
\qe
Using this inequality it holds that  
\begin{align*}
\| \vn \|_\bbL^2&=\| L\hat f_n - L f^0\|_\bbL^2\\
&=\|\frac1n\sum_i[ L\delta_{X_i} - \bbE_X L \delta_X]\|_\bbL^2\\
&=\frac1{n^2}\sum_i \|Z_i\|_\bbL^2 + \frac1{n^2}\sum_{i\neq j} \langle Z_i,Z_j\rangle_\mathbb{L}\\
&\leq \frac{2}n \Lambda(\mathds R^d) + \frac1{n^2}\sum_{i\neq j} \langle Z_i,Z_j\rangle_\mathbb{L} \,.
\end{align*}
Now, consider the kernel $h(X_i,X_j)=\langle Z_i,Z_j\rangle_\mathbb{L}$ and observe that the latter right hand side is a $U$-process. First, the Cauchy–Schwarz inequality and \eqref{e:bound_yoyo} lead to $\| h\|_\infty\leq 2 \Lambda(\mathds R^d)$. Second, check that this kernel is $\sigma$-canonical, namely:
\[
\mathds E_{X_j} h(X_i,X_j) = \mathds E_{X_i,X_j} h(X_i,X_j) =0\,.
\]
By Proposition 2.3 of \cite{arcones1993limit}, it follows that there exists two universal constants $C_1,C_2>0$ such that 
\[
\frac1{n^2}\sum_{i\neq j} \langle Z_i,Z_j\rangle_\mathbb{L} \leq 2C_1  \Lambda(\mathds R^d)\frac{\log(C_2/\alpha)}{n}\,,
\]
with probability at least $1-\alpha$.
\end{proof}

Let $\delta_\kappa>0$ be arbitrarily small. Set $\kappa=\kappa_n=\sqrt{\Lambda(\mathds R^d)}\,n^{-\frac12 + \delta_\kappa}$ so that 
\[
\|\vn\|_\bbL\leq C\sqrt{\Lambda(\mathds R^d)} \times n^{-\frac{1}2+\frac{\delta_\kappa}2} = o(\kappa_n)
\]
with probability greater than $1-e_n:=1-Ce^{-n^{\delta_\kappa }}$. In this case, with an overwhelming probability, 
 the requirements of the aforementioned Lemma 2 are met: $\kappa_n$ and $
\| \Gamma_n\|_{\bbL} \kappa_n^{-1}$ are small enough. Hence, a sequence of probability events {$(e_n)_{n \geq 1}$} exists such that {$\lim_{n \rightarrow + \infty} e_n=0$} (almost) exponentially fast and for which the desired result holds (with $\delta_\kappa=1/2-r_\kappa$).
It ends the proof of Theorem~11.
\end{proof}


\section{Construction of a dual certificate (proof of Theorem~7)} 
\label{s:dual}

For a given set of points  $S^0=\{t_1,\ldots,t_K\}$, we recall that $\Delta := \min_{k \neq \ell} \|t_k-t_\ell\|_2$. For any $\alpha \in \mathds{R}^K$ and $\beta \in \mathds{R}^{Kd}$, we consider the function
\begin{equation}\label{def:pm}
 p^{\alpha,\beta}_m(t) = \sum_{k=1}^K \left\lbrace \alpha_k \psi_{m}(t-t_k) + \langle \beta_k,  \nabla \psi_{m}(t-t_k) \rangle \right\rbrace, \quad \forall t\in \bR^d.
 \end{equation}
For the sake of convenience, we omit the dependency in $\alpha$ and $\beta$ of the previous function and simply denote it by $p_m$.
We are interested in the existence of a set of coefficients $(\alpha,\beta)$ such that
$p_m$ defined in~\eqref{def:pm} satisfies the two interpolation conditions:
\begin{equation}\label{eq:interpolation}
\forall k \in \{1,\ldots, K\} \qquad p_m(t_k)=1 \quad \text{and} \quad \nabla p_m(t_k)=0.
\end{equation}
The following proposition establishes the control of $\alpha$ and $\beta$ due to the conditions~\eqref{eq:interpolation}.

\begin{prop}\label{prop:sol_alpha_beta}
If $m$ is chosen such that $m \geq \frac{K^{1/4} d^{3/4}}{ \mathcal{C}\Delta}$ for some positive constant $\mathcal{C}$ small enough, then $(\alpha,\beta)$ exists such that~\eqref{eq:interpolation} holds 
and:
\begin{itemize}
\item $i)$ The supremum norms are upper bounded by:
\begin{equation*}
 \|\alpha - \mathbf{1}_K \|_\infty \lesssim \frac{K d^3}{m^4 \Delta^4} \textrm{ and } \sup_{1 \leq k \leq K} \|\beta_k\|_\infty  \lesssim \frac{1}{m} \frac{K d^2}{m^4 \Delta^4}.
\end{equation*}
\item $ii)$ The Euclidean norm is upper bounded by: 
\begin{equation*}
\sqrt{\sum_{k=1}^K \|\beta_k\|_2^2} \lesssim \frac{\sqrt{K}}{m\sqrt{d}} \times \frac{K d^3}{m^4 \Delta^4}.
\end{equation*}
\end{itemize}
\end{prop}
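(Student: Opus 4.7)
The plan is to view~\eqref{eq:interpolation} as a $K(1+d)\times K(1+d)$ linear system in the unknown $(\alpha,\beta)$, show that this system is a small perturbation of a block-diagonal system under the separation hypothesis, and invert it by a Neumann series.

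First, differentiating~\eqref{def:pm} and using the identity $\nabla[\langle\beta_k,\nabla\psi_m(\cdot-t_k)\rangle](t)=\nabla^2\psi_m(t-t_k)\,\beta_k$, the conditions~\eqref{eq:interpolation} read
\[
\begin{pmatrix}\Psi_0 & \Psi_1^{\top}\\ \Psi_1 & \Psi_2\end{pmatrix}\begin{pmatrix}\alpha\\ \beta\end{pmatrix}=\begin{pmatrix}\mathbf{1}_K\\ 0\end{pmatrix},
\]
where the $(j,k)$-blocks are $\Psi_0(j,k)=\psi_m(t_j-t_k)\in\mathds{R}$, $\Psi_1(j,k)=\nabla\psi_m(t_j-t_k)\in\mathds{R}^d$ and $\Psi_2(j,k)=\nabla^2\psi_m(t_j-t_k)\in\mathds{R}^{d\times d}$. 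A direct computation from $\mathrm{sinc}(0)=1$, $\mathrm{sinc}'(0)=0$, $\mathrm{sinc}''(0)=-1/3$, together with the product-of-fourth-powers structure, yields $\psi_m(0)=1$, $\nabla\psi_m(0)=0$ and $\nabla^2\psi_m(0)=-\tfrac{4m^2}{3}I_d$, so that the block-diagonal part of the matrix equals $\mathrm{diag}(I_K,-\tfrac{4m^2}{3}I_{Kd})$.

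Next, I rescale $\tilde\beta_k:=-\tfrac{4m^2}{3}\beta_k$ to reduce the system to $(I+R)(\alpha,\tilde\beta)^{\top}=(\mathbf{1}_K,0)^{\top}$ with $R$ block off-diagonal. To estimate $R$ I exploit the minimum separation: for $j\neq k$, $\|t_j-t_k\|_2\geq\Delta$, hence at least one coordinate of $t_j-t_k$ has absolute value $\geq\Delta/\sqrt{d}$. Combining $|\mathrm{sinc}(x)|\leq 1\wedge|x|^{-1}$ with analogous pointwise bounds on $(\mathrm{sinc}^4)'$ and $(\mathrm{sinc}^4)''$, the product structure gives
\[
|\psi_m(t_j-t_k)|\lesssim\frac{d^2}{(m\Delta)^4},\quad |\partial_i\psi_m(t_j-t_k)|\lesssim\frac{m\,d^2}{(m\Delta)^4},\quad |\partial^2_{ii'}\psi_m(t_j-t_k)|\lesssim\frac{m^2\,d^2}{(m\Delta)^4},
\]
so that, after the $m^{-2}$ rescaling inherent to $\tilde\beta$, every entry of $R$ is controlled by $d^2/(m\Delta)^4$. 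Summing $K-1$ off-diagonal contributions per row and the $d$ gradient coordinates per $\beta$-block yields $\|R\|_{\infty}\lesssim Kd^3/(m\Delta)^4$, which is strictly less than $1/2$ under the hypothesis $m\geq K^{1/4}d^{3/4}/(\mathcal{C}\Delta)$ with $\mathcal{C}$ small enough. A Neumann series then provides existence of $(\alpha,\beta)$ and the identity $(\alpha-\mathbf{1}_K,\tilde\beta)^{\top}=-R(\mathbf{1}_K,0)^{\top}+\mathcal{O}(\|R\|^2)$. Reading off the $\alpha$-block gives $\|\alpha-\mathbf{1}_K\|_{\infty}\lesssim Kd^3/(m\Delta)^4$, and undoing the rescaling on $\tilde\beta$ produces the $\sup_k\|\beta_k\|_{\infty}$ estimate of~(i); the extra $d^{-1}$ compared with the $\alpha$-estimate comes from never having to sum the $d$ derivatives that would enter if we had bounded $\tilde\beta$ row-wise in $\ell^\infty$ with a gradient contribution. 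For~(ii), I would instead treat each $\beta_k$ as an $\ell^2$ block of size $d$, bound $R$ as an $\ell^2\to\ell^2$ operator, and use Cauchy--Schwarz over the $d$ gradient coordinates to gain the extra $1/\sqrt{d}$ factor appearing in the claimed Euclidean bound.

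The main obstacle is to track the $d$-dependence sharply: a crude entrywise $\ell^\infty$ argument over-counts and only yields weaker powers of $d$, so obtaining~(ii) requires a careful $\ell^2$ analysis exploiting the product structure of $\psi_m$ and the fact that mixed second derivatives $\partial^2_{ii'}\psi_m(0)$ vanish for $i\neq i'$, which keeps the diagonal block of the Hessian part proportional to $I_{Kd}$ and prevents any spurious $\sqrt{d}$ loss in the block-diagonal inversion.
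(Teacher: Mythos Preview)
Your setup and entrywise bounds on $\psi_m$, $\nabla\psi_m$, $\nabla^2\psi_m$ are correct and match the paper, but the Neumann argument as written has a genuine gap. The rescaling $\tilde\beta_k=-\tfrac{4m^2}{3}\beta_k$ is one–sided: it multiplies the \emph{columns} associated with $\beta$ by $m^{-2}$, but leaves the \emph{rows} of the gradient block untouched. Concretely, in the row $(j,u)$ of your perturbation $R$, the coefficient of $\alpha_k$ is still $\partial_u\psi_m(t_j-t_k)$, whose size is $\lesssim d^2/(m^3\Delta^4)$ (Lemma~\ref{lemma:borne_derivees}$\,(ii)$), not $d^2/(m\Delta)^4$. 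Summing over $k\neq j$ gives a contribution $Kd^2/(m^3\Delta^4)$ to $\|R\|_\infty$, which equals $(m/d)\cdot Kd^3/(m\Delta)^4$; under your hypothesis $m\gtrsim K^{1/4}d^{3/4}/\Delta$ the factor $m/d$ is unbounded (for instance as $\Delta\to0$), so you cannot conclude $\|R\|_\infty<1/2$. A correct symmetric reduction would rescale $\beta\mapsto m\beta$ \emph{and} divide the gradient equations by $m$ (a congruence, not just a change of variable), after which every entry of the perturbation is indeed $\lesssim d^2/(m\Delta)^4$ and your Neumann argument goes through.

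The paper avoids this issue by a Schur complement: it first inverts the Hessian block $G_m=B_m-\tfrac{4m^2}{3}I_{Kd}$ (easy, since $\tfrac{3}{4m^2}G_m$ is a small perturbation of $-I_{Kd}$), then shows the Schur complement $H_m=(I_K+A_m)-D_m G_m^{-1}D_m^{\top}$ is close to $I_K$. This automatically balances the $m$-scales, because $D_m G_m^{-1}D_m^{\top}$ carries two gradient factors ($\sim m$ each) against one $G_m^{-1}$ ($\sim m^{-2}$). For part~$(ii)$ the paper does not use an $\ell^2\!\to\!\ell^2$ bound as you suggest, but rather the interpolation $\|M\|_2\le\sqrt{\|M\|_1\|M\|_\infty}$ applied to $M=G_m^{-1}D_m^{\top}H_m^{-1}$, exploiting that $\|D_m\|_1=\|D_m^{\top}\|_\infty$ is a factor $d$ smaller than $\|D_m\|_\infty$ (one sums only over $K$ rows rather than $Kd$ columns); the geometric mean then produces exactly the $1/\sqrt{d}$ gain in the claimed bound.
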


Even though not directly usable in our framework, we emphasize that the stability result and the  construction given in \cite{Candes_FernandezGranda_14} played a central role in our work to prove Proposition \ref{prop:sol_alpha_beta}.

\begin{proof}
The proofs of $i)$ and $ii)$ are divided into four steps. 

\noindent
\underline{\textbf{Step 1: Matricial formulation of~\eqref{eq:interpolation}}.}\\
The certificate $p_m$ should satisfy the following properties:
\begin{eqnarray*} 
& & \forall i\in[K]: \, \left\lbrace \begin{array}{c}
p_m(t_i) = 1 \\
\nabla p_m(t_i) = 0  
\end{array} \right. \\
& \Longleftrightarrow & \left\lbrace \begin{array}{l} 
      \alpha_i + \sum_{k\not = i} \alpha_k \psi_m(t_i-t_k) + \sum_{k=1}^K
      \sum_{v=1}^d \beta_k^v \partial_{v}(\psi_m)(t_i-t_k) = 1 \\
      \sum_{k=1}^K \alpha_k \partial_u (\psi_m) (t_i-t_k) + \sum_{k=1}^K \sum_{v=1}^d\beta_k^v \partial^2_{u,v}(\psi_m)(t_i-t_k) = 0  \end{array}\right. \quad \forall u\in [d], \forall i\in[K].
\end{eqnarray*} 
We can organize the above equations to obtain a linear system of $K (d+1)$ equations with $K(d+1)$ parameters. In the following, we denote these parameters by $\alpha=(\alpha_1,\ldots,\alpha_K)^T \in \mathds{R}^d$ 
and $\beta=(\beta_{1}^1,\ldots,\beta_1^d,\beta_2^1,\ldots,\beta_2^d,\ldots,\beta_K^1,\ldots,\beta_K^d)^T \in \mathds{R}^{Kd}$. The above equations can be rewritten as:
\begin{equation}\label{syst}
\left(\begin{array}{c c} I_K+A_m & D_m\\D_m^T & B_m - \frac{4}{3} m^2 I_{K\times d}\end{array}\right) \left( \begin{array}{c} \alpha \\ \beta \end{array}\right) = \left( \begin{array}{c} \mathbf{1}_K \\ \mathbf{0}_{Kd} \end{array} \right),
\end{equation}
where $\mathbf{1}_K$ denotes the vector of size $K$ having all its entries equal to $1$ (similar definition for $\mathbf{0}_{Kd} $). The matrix $A_m \in \mathds{R}^{K\times K}$ acts on the coefficients $\alpha$ as:
$$ (A_m)_{i,k} = \mathds{1}_{i\not =k} \psi_m(t_i-t_k) \quad \forall i,k \in [K],$$
while $D_m \in \mathds{R}^{K\times Kd}$ describes the effect of the partial derivatives of $\psi_m$ on $\alpha$ and $\beta$ as: 
$$
(D_m)_{i,(k,v)} = \partial_v (\psi_m)(t_i-t_k) \qquad \forall i,k \in [K] \quad \mathrm{and} \quad v \in [d].
$$
Finally, the squared matrix $B_m \in \mathds{R}^{Kd\times Kd}$ is given by:
$$
(B_m)_{(i,u),(k,v)} = \mathds{1}_{(i,u)\not =(k,v)}	\partial^2_{u,v}(\psi_m)(t_i-t_k) \qquad \forall i,k \in [K] \quad  u,v \in [d].
$$

\noindent\underline{\textbf{Step 2: Inversion of the system~\eqref{syst}}}\\
According to linear algebra results (see e.g. \cite{horn2012matrix}), the system~\eqref{syst} is invertible if and only if 
$$G_m:=B_m - \frac{4}{3} m^2 I_{Kd}$$ 
and its Schur complement 
$$H_m:=(I_K+A_m) - D_m G_m^{-1}D_m^T$$
 are both invertible. To prove this assertion, we remember that a symmetric matrix $M$ is invertible if $\|I -M\|_\infty < 1$, where $\|.\|_{\infty}$ is the subordinate matrix infinity norm ($\|M\|_{\infty} = \underset{i}{\max} \sum_{j} |M_{ij}|$). In such a case $\|M^{-1}\|_\infty \leq \frac{1}{1-\|I -M\|_\infty}$. \\
 Moreover, we will use in the sequel that $\|M\|_1 = \underset{j}{\max} \sum_{i} |M_{ij}| =\|M^T\|_\infty$.\\

$\bullet$ Invertibility of $G_m$ and computation of $\| G_m^{-1} \|_\infty$:

For all $i,k \in [K]$ and $u,v \in [d]$,
$$
(G_m)_{(i,u)(k,v)} = \left\{\begin{array}{l l l} -\frac{4m^2}{3} & \textrm{ if } & i=k, u=v\\
\partial^2_{(u,v)}(\psi_m)(0) & \textrm{ if } & i=k, v\neq u\\
\partial^2_{(u,v)}(\psi_m)(t_i-t_k) & \textrm{ if } & i\neq k, v\neq u
\end{array}\right.
$$
according to the definition of $B_m$. Setting $\tilde G_m = \frac{-3}{4 m^2} G_m$, we get
$$
\|I_{Kd}-\tilde G_m\|_\infty = \underset{(i,u)}{\max}\ \sum_{(k,v)} |(I_{Kd}-\tilde G_m)_{(k,v)}|
$$
with 
\begin{eqnarray*}
\sum_{(k,v)} |(I_{Kd}-\tilde G_m)_{(k,v)}| &=& \frac{3}{4 m^2} \sum_{k\neq i} \sum_{v=1}^d \left| \partial^2_{(u,v)}(\psi_m)(t_i-t_k) \right|  +\frac{3}{4 m^2}  \sum_{v\neq u} \left| \partial^2_{(u,v)}(\psi_m)(0) \right|\\
&\lesssim&  K d \frac{1}{m^2} \frac{d^2}{m^2 \Delta^4} = \frac{K d^3}{m^4 \Delta^4}
\end{eqnarray*}
according to Lemma \ref{lemma:tools} and $iii)$ of Lemma~\ref{lemma:borne_derivees}. 
Thus, if there exists a positive constant $\mathcal C$ small enough such that $\frac{K d^3}{m^4 \Delta^4}\leq \mathcal C$, then  $\|I_{Kd}-\tilde G_m\|_\infty<1/2$ and the matrix $G_m$ is invertible. 
Moreover, 
\begin{eqnarray}
\|G_m^{-1}\|_\infty &=& \frac{3}{4m^2} \|\tilde G_m^{-1}\|_\infty 
\leq  \frac{3}{4m^2} \frac{1}{1-\|I_{Kd} - \tilde G_m\|_\infty} \lesssim \frac{1}{m^2}. \label{Gminv}
\end{eqnarray} 

$\bullet$ Invertibility of $H_m$ and computation of $\| H_m^{-1} \|_\infty$:\\
In the same way, we want to prove that $\|I_K - H_m\|_\infty<1$. According to the properties of  the $\infty$-norm, 
\begin{eqnarray}
\|I_K - H_m\|_\infty &=& \|D_m G_m^{-1} D_m^T - A_m\|_\infty \nonumber\\
&\leq & \|A_m\|_\infty + \|D_m G_m^{-1} D_m^T\|_\infty \nonumber\\
&\leq & \|A_m\|_\infty + \|D_m\|_\infty \|G_m^{-1}\|_\infty \|D_m\|_1. \label{Hinvdecomp}
\end{eqnarray}

In a first time, we provide an upper bound on $\|A_m\|_{\infty}$. Remark that for any $i \in [K]$
\begin{eqnarray*}
 \sum_{j=1}^K | (A_m)_{ij} |  =  \sum_{j=1}^K | \psi_m(t_i - t_j) | \mathds{1}_{i\not = j}.
\end{eqnarray*}
Applying $i)$ of Lemma \ref{lemma:borne_derivees}, we hence obtain
\begin{equation}\label{eq:linfiniAm}
\|A_m\|_{\infty} \lesssim \frac{K d^2}{m^4 \Delta^4}.
\end{equation}
Now, recall that
\begin{eqnarray*}
   \sum_{k=1}^K \sum_{v=1}^d | (D_m)_{i,(k,v)} |
& = & \sum_{k=1}^K\sum_{v=1}^d | \partial_v (\psi_m)(t_i-t_k)|.
\end{eqnarray*}
Applying  $ii)$ of  Lemma \ref{lemma:borne_derivees}, 
we deduce that:
\begin{equation}\label{eq:linfiniDm}
\|D_m\|_{\infty} \lesssim \frac{K d^3}{m^3 \Delta^4}.
\end{equation}

Following the same ideas, for any pair $(i,u)$ with
 $i \in [K]$ and $u \in [d]$, we have: 
 $$
\sum_{j=1}^K |(D_m)_{j,(i,u)}| = \sum_{j=1}^K |\partial_u( \psi_m)(t_i - t_j) |.$$
 Again, $ii)$ of Lemma \ref{lemma:borne_derivees}  yields:
\begin{equation}\label{eq:linfiniDmT}
 \|D_m\|_{1} = \|D_m^T\|_{\infty}
 \lesssim  \frac{K d^2}{m^3 \Delta^4}.
 \end{equation}
 Gathering~\eqref{Gminv},~\eqref{eq:linfiniAm},~\eqref{eq:linfiniDm} and~\eqref{eq:linfiniDmT} in~\eqref{Hinvdecomp}, we deduce that:
 \begin{eqnarray}
 \|I_K - H_m\|_\infty &\lesssim& \frac{K d^2}{m^4 \Delta^4} + \frac{K d^2}{m^3 \Delta^4}\frac{K d^3}{m^3 \Delta^4} \frac{1}{m^2} 
 \lesssim  \frac{K d^3}{m^4 \Delta^4}. \label{Hinv}
\end{eqnarray}
provided $\frac{Kd^3}{m^4\Delta^4} \leq \mathcal{C}$ for some constant $\mathcal C$ small enough. This implies that under such a condition, $\|I_K - H_m\|_\infty <1/2$. Moreover,  the Schur complement $H_m=(I_K+A_m) - D_m G_m^{-1}D_m^T$ is then invertible and
\begin{equation}
\|H_m^{-1}\|_\infty \leq \frac{1}{1-\|I_K - H_m\|_\infty} \leq 1+2\|I_K - H_m\|_\infty \leq 1 + C  \frac{K d^3}{m^4 \Delta^4},
\label{Hminv}
\end{equation}
for some positive constant $C$, provided the constraint $\frac{Kd^3}{m^4\Delta^4} \leq \mathcal{C}$ is satisfied. 
 
\noindent To conclude this second step, the system~\eqref{syst} is invertible if $\frac{K d^3}{\Delta^4 m^4}\leq \mathcal C$ for some constant $\mathcal{C}$ small enough. In such a case
\begin{equation}
\left(\begin{array}{c} \alpha \\ \beta \end{array}\right) = \left(\begin{array}{c} I_K \\ -G_m^{-1} D_m^{T}\end{array}\right) H_m^{-1} \mathbf{1}_K. 
\label{systinv}
\end{equation}

\noindent\underline{\textbf{Step 3: Proof of i) }}
In the sequel, we assume that $\frac{Kd^3}{m^4\Delta^4} \leq \mathcal{C}$ for some positive constant $\mathcal{C}$ small enough. First, according to (\ref{Hminv}) and~\eqref{systinv}, we obtain that:
$$
\|\alpha \|_\infty  = \| H_m^{-1} \|_\infty  \lesssim 1+C \frac{K d^3}{m^4 \Delta^4}.
$$
Moreover
$$
\alpha-\mathbf{1}_K = (H_m^{-1} - I_K) \mathbf{1}_K = ( (I_K + \tilde H_m)^{-1} -I_K) \mathbf{1}_K
$$
with $\tilde H_m = A_m - D_mG_m^{-1}D_m^T$. 
Hence, since for $\tilde{H}_m$ small enough (\textit{i.e.} for a sufficiently small norm) we have
$$
(I_K+\tilde H_m)^{-1} = \sum_{k \geq 0} (-\tilde{H}_m)^{k} = I_K + \sum_{k \geq 1} (-\tilde{H}_m)^{k}.
$$
Hence, for $\tilde{H}_m$ small enough,
\begin{equation}\label{eq:upLinfini}
\|\alpha-\mathbf{1}_K\|_{\infty}  \leq \left\| \sum_{k \geq 1} (-\tilde{H}_m)^k \right\|_{\infty} \leq 
\left\|\tilde{H}_m \right\|_{\infty} \sum_{k \geq 0} \left\|\tilde{H}_m \right\|_{\infty}^k.
\end{equation}
According to~\eqref{Hinv},
\begin{equation}\label{eq:linfiniH}
\|\tilde{H}_m\|_{\infty} =\|H_m - I_K\|_\infty \lesssim \frac{K d^3}{m^4 \Delta^4},
\end{equation}
and we can choose the constant $\mathcal C$ small enough in the constraint $\frac{K d^3}{m^4\Delta^4}\leq \mathcal C$ such that $\| \tilde H_m \|_\infty \leq 1/2$. 
We conclude that 
$$
\|\alpha -\mathbf{1}_K\|_\infty \lesssim \frac{K d^3}{m^4 \Delta^4}.
$$
In a second time, gathering~\eqref{Gminv},~\eqref{eq:linfiniDmT} and~\eqref{Hminv}, we deduce that: 
\begin{eqnarray} 
\|\beta\|_\infty &\leq & \|G_m^{-1} D_m^T H_m^{-1} \mathbf{1}_K\|_\infty,  \nonumber\\
&\leq &  \|G_m^{-1} D_m^T H_m^{-1}\|_\infty, \nonumber\\
&\leq &  \|G_m^{-1}\|_\infty \|D_m\|_1 \|H_m^{-1}\|_\infty,  \nonumber\\
&\lesssim & \frac{1}{m} \frac{Kd^2}{m^4 \Delta^4}. \label{betanorm} 
\end{eqnarray} 
 
\noindent\underline{\textbf{Step 4: Proof of ii) }} 
According to~\eqref{systinv}, 
\begin{eqnarray}
\|\beta\|_2 = \|G_m^{-1} D_m^T H_m^{-1} \mathbf{1}_K\|_2
& \leq & \sqrt{K}\, \|G_m^{-1} D_m^T H_m^{-1} \|_2, \nonumber \\
& \leq & \sqrt{K} \sqrt{ \|G_m^{-1} D_m^T H_m^{-1}\|_{1} \|G_m^{-1} D_m^T H_m^{-1}\|_{\infty}}.
\end{eqnarray} 


Using~\eqref{betanorm}, $\|G_m^{-1} D_m^T H_m^{-1} \|_\infty \leq \|G_m^{-1}\|_\infty \|D_m\|_1 \|H_m^{-1}\|_\infty \lesssim \frac{\mathcal C}{md}$.
For the second term, we use the dual relationship between $\|.\|_\infty$ and $\|.\|_1$ and  that the matrices $G_m$ and $H_m$ are symmetric. Gathering~\eqref{Gminv},~\eqref{eq:linfiniDm} and~\eqref{Hminv}, we obtain that: 
\begin{eqnarray}
\|G_m^{-1} D_m^T H_m^{-1} \|_1 &\leq & \|G_m^{-1}\|_1 \|D_m^T\|_1 \|H_m^{-1}\|_1 \nonumber\\
&\leq & \|G_m^{-1}\|_\infty \|D_m\|_\infty \|H_m^{-1}\|_\infty \nonumber\\
&\lesssim&\frac{1}{m^2} \frac{Kd^3}{m^3 \Delta^4} = \frac{1}{m} \frac{K d^3}{m^4 \Delta^4}.
\end{eqnarray}
We then deduce that
$$
\|\beta\|_2 \lesssim \frac{\sqrt{K}}{m\sqrt{d}} \frac{K d^3}{m^4 \Delta^4}. 
$$
\end{proof}

Thanks to the previous proposition, we are now ready to prove Theorem~7. 
Our strategy is inspired from the one of \cite{Candes_FernandezGranda_14}.

\begin{proof}[Proof of Theorem~7] 
We define an integer $m$ that will be chosen large enough below and consider $\Pm = p_m^2$. 

\noindent
\underline{\textbf{Proof of $i)$ and $ii)$:}}
From Proposition \ref{prop:sol_alpha_beta}, we know that if $m$ satisfies 
$
m \geq \mathcal{C} \frac{K^{1/4} d^{3/4}}{\Delta},
$
for a constant $\mathcal{C}$ large enough independent from $K$, $\Delta$ and $d$, then $\Pm$ satisfies  the interpolation properties:
$$
0 \leq \Pm \leq 1 \qquad \text{with} \qquad \Pm(t) = 1 \Longleftrightarrow t \in \{t_1,\ldots,t_K\}.
$$
Our strategy relies on a   study of the variations of $\Pm$ near each  support points $\{t_1,\ldots,t_K\}$, whose union defines the \textit{near region}, and far from these support points, which is then the \textit{far region}.

\paragraph{Near region}
Let $\epsilon \in \left(0,\frac{\Delta}{2}\right)$ a parameter whose value will be made precise later on. The near-region $\bbN(\epsilon)$ is the union of $K$ sets that are defined by:
$$ \bbN(\epsilon) = \bigcup_{i=1}^K \lbrace t\in \bR^d, \, \|t-t_i\|_2\leq \epsilon \rbrace := \bigcup_{i=1}^K \bbN_{i}(\epsilon).$$
The far region is therefore given by:
$$ \bbF(\epsilon) = \bR^d \setminus  \bbN(\epsilon).$$ 
Let $i\in \lbrace 1,\dots, K \rbrace$ be fixed, the function $p_m$ involves a sum over $k \in [K]$ and we consider two cases:
\begin{itemize}
\item If $k\not = i$, then, for all $t\in \bbN_{i}(\epsilon)$, $\xi_{t,i,k}$ exists such that
$$ \psi_m(t-t_k) = \psi_m(t_i-t_k)  + \langle (t-t_i), \nabla \psi_m(t_i-t_k)\rangle + \frac{1}{2} (t-t_i)^T D^2 \psi_m(\xi_{t,i,k}-t_k)(t-t_i),$$
with $\|\xi_{t,i,k} - t_i \|_2 \leq \|t-t_i\|_2$. 
Moreover, for any $u \in [d]$, a $\tilde\xi_{t,i,k}^u$ exists such that:
\begin{align*}
 \partial_u(\psi_m)(t-t_k)& = \partial_u(\psi_m)(t_i-t_k)  + \langle (t-t_i) ,(\partial_{u,v}(\psi_m)(t_i-t_k))_v \rangle \\
  &  + \frac{1}{2} (t-t_i)^T D^2\{\partial_u(\psi_m)\}(\tilde\xi_{t,i,k}^u-t_k)(t-t_i),\end{align*}
with $\|\tilde\xi_{t,i,k}^u - t_i \|_2 \leq \|t-t_i\|_2$.
\item If $k=i$,  since $\nabla \psi_m(0)=0$ and $D^3(\psi_m)(0) = 0$, for all $t\in \bbN_{i}(\epsilon)$, a $\xi_{t,i,i}$ exists such that:
\begin{align*}
 \lefteqn{\psi_m(t-t_i) = \psi_m(0) + \frac{1}{2} (t-t_i)^T D^2(\psi_m)(0)(t-t_i)}\\
 &  + \frac{1}{24} \underbrace{\sum_{1 \leq u_1,u_2,u_3,u_4 \leq d} (t^{u_1}-t_i^{u_1})
(t^{u_2}-t_i^{u_2})(t^{u_3}-t_i^{u_3})(t^{u_4}-t_i^{u_4}) \partial_{u_1,u_2,u_3,u_4}(\psi_m)(\xi_{t,i,i} - t_i)}_{:=(t-t_i)^T A(\xi_{t,i,i} - t_i)(t-t_i)}
\end{align*}
with $\|\xi_{t,i,i} - t_i \|_2 \leq \|t-t_i\|_2$. We also have that for any $u \in \{1,\dots,d\}$, the existence of $\tilde\xi_{t,i,i}^u$ such that: 
$$ \partial_u \psi_m(t-t_i) = \partial_u \psi_m (0) + \langle t-t_i, (\partial_{u,v}(\psi_m)(0))_v\rangle + \frac{1}{2} (t-t_i)^T D^2(\partial_u(\psi_m))(\tilde \xi_{t,i,i}^u - t_i)(t-t_i),$$
with $\|\tilde\xi_{t,i,i}^u - t_i \|_2 \leq \|t-t_i\|_2$.
\end{itemize}

 \noindent
Hence, for all $t\in \bbN_{i}(\epsilon)$, we can use the previous Taylor formulas and obtain that:
{\small 
\begin{eqnarray*}
p_m(t) & = & \sum_{k=1}^K \left[ \alpha_k \psi_m(t-t_k) + \langle \beta_k, \nabla \psi_m(t-t_k)\rangle \right], \\
& = & \alpha_i \psi_m(t-t_i) + \langle \beta_i, \nabla \psi_m(t-t_i)\rangle +\sum_{k\not = i} \alpha_k \psi_m(t-t_k)  + \sum_{k\not = i} \langle \beta_k, \nabla \psi_m(t-t_k)\rangle \\
& = & \alpha_i \left[ \psi_m(0) + \frac{1}{2} (t-t_i)^T D^2(\psi_m)(0)(t-t_i) + \frac{1}{24} (t-t_i)^T A(\xi_{t,i,i} - t_i) (t-t_i)\right] \\
&  & + \left\langle \beta_i,   \nabla \psi_m(0) +  D^2(\psi_m)(0)(t-t_i) + \frac{1}{2} \left((t-t_i)^T D^2 \partial_u(\psi_m)(\tilde \xi_{t,i,i}^u - t_i)  (t-t_i)\right)_u\right\rangle\\
&  & + \sum_{k\not = i} \alpha_k \left[  \psi_m(t_i-t_k)  + \langle t-t_i, \nabla \psi_m(t_i-t_k)\rangle + \frac{1}{2} (t-t_i)^T D^2(\psi_m)(\xi_{t,i,k}-t_k)(t-t_i)  \right] \\
&  & + \sum_{k\not = i} \left\langle \beta_k,\nabla \psi_m(t_i-t_k)  + D^2( \psi_m)(t_i-t_k)(t-t_i) + \frac{1}{2} \left((t-t_i)^T D^2\partial_u(\psi_m)(\tilde\xi_{t,i,k}-t_k)(t-t_i)\right)_u   \right\rangle.
\end{eqnarray*}
}
These terms can be re-arranged as follows:
\begin{eqnarray*}
p_m(t) & = & \sum_{k=1}^K \left[ \alpha_k \psi_m(t_i-t_k) + \langle \beta_k, \nabla \psi_m(t_i-t_k)\rangle \right] \\
&  & +   \left \langle  D^2(\psi_m)(0)\beta_i + \sum_{k\not =i} \alpha_k \nabla\psi_m(t_i-t_k) +  D^2(\psi_m)(t_i-t_k)\beta_k , (t-t_i) \right\rangle \\
&  & + \frac{(t-t_i)^T}{2} 
\left[ \alpha_i D^2(\psi_m)(0) + \sum_{k\neq i} \alpha_k D^2(\psi_m)(\xi_{t,i,k}-t_k)  \right. \\
& & \left. + \frac{\alpha_i}{12} A(\xi_{t,i,i} - t_i)+ \sum_{k=1}^K \sum_{u=1}^d \beta_i^u D^2 (\partial_u\psi_m)(\tilde\xi_{t,i,k}^u - t_i)
\right](t-t_i)\\
& = & C_0 + \langle C_1,t-t_i\rangle + \frac{1}{2}(t-t_i)^T C_2(t) (t-t_i).
\end{eqnarray*}
Of course, the construction of Proposition \ref{prop:sol_alpha_beta} yields 
$$ C_0 = \sum_{k=1}^K \left[ \alpha_k \psi_m(t_i-t_k) + \langle \beta_k,  \nabla\psi_m (t_i-t_k)\rangle \right] = p_m(t_i) = 1,$$
and 
$$ C_1 =    \sum_{k\not =i} \alpha_k \nabla\psi_m(t_i-t_k) + \sum_{k=1}^K  D^2(\psi_m)(t_i-t_k)\beta_k = \nabla p_m(t_i) = 0,$$
thanks to the constraints expressed on the function $p_m$. Hence, for all $t\in \bbN_{i}(\epsilon)$ we have
$$ p_m(t) = 1 + \frac{1}{2} (t-t_i)^T C_2(t)(t-t_i).$$
In the following, we prove that $C_2$ is a negative matrix and bounded from below. 
Thanks to Lemma \ref{lemma:borne_derivees}, we can compute the first term of $C_2$ and we have 
$$ D^2 (\psi_m)(0) = -\frac{4m^2}{3} I_d,$$
which entails
\begin{equation}\label{eq:c1}
\frac{1}{2} (t-t_i)^T D^2 (\psi_m)(0)(t-t_i) = - \frac{2 m^2}{3} \|t-t_i\|_2^2.
\end{equation}
The second term of $C_2$ may be upper bounded with the help of the spectral radius of $D^2(\psi_m)(\xi_{t,i,k} - t_k)$: (denoted by $\rho(M)$ for any squared symmetric matrix $M$):
$$
\frac{1}{2} (t-t_i)^T \sum_{k \neq i} \alpha_k D^2 (\psi_m)(\xi_{t,i,k} - t_k)(t-t_i)
\leq \|\alpha\|_{\infty} \|t-t_i\|_2^2 \sum_{k\neq i} \rho\left(D^2 (\psi_m)(\xi_{t,i,k} - t_k)\right)\,.
$$

To handle this last term, we use the fact that in the near region $\bbN_{i}(\epsilon)$,  $\|\xi_{t,i,k} - t_k \|_2$ is far from $0$. Using the triangle inequality, since $\epsilon < \frac{\Delta}{2}$, we have for any $k\in [K]$ with $k\not = i$
$$ \| \xi_{t,i,k} - t_k \|_2 \geq \|t_i - t_k\|_2 - \| \xi_{t,i,k} - t_i \|_2 \geq \|t_i - t_k\|_2 - \|t-t_i\|_2 \geq \Delta - \epsilon \geq \frac{\Delta}{2}.$$

Using the inequality $\rho(M) \leq \|M\|_{\infty}$ for any symmetric matrix, Proposition \ref{prop:sol_alpha_beta} and $iii)$ of Lemma \ref{lemma:borne_derivees}, we obtain that:
\begin{eqnarray}
\frac{1}{2} (t-t_i)^T \sum_{k \neq i} \alpha_k D^2 (\psi_m)(\xi_{t,i,k} - t_k)(t-t_i)
&\lesssim &K \|\alpha\|_{\infty}   \left(d \times \frac{d^2}{m^2 \Delta^4} \right)  \|t-t_i\|_2^2 \nonumber\\
& \lesssim  &\frac{K d^3}{m^2 \Delta^4}   \|t-t_i\|_2^2.\label{eq:c2}
\end{eqnarray}

The third term of $C_2$ is described by the matrix:
$$
\frac{\alpha_i}{12} \left( A(\xi_{t,i,i}-t_i)\right)_{u,v} =
\frac{\alpha_i}{12} \left( \sum_{p=1}^d \sum_{q=1}^d 
(\xi_{t,i,i}^{p}-t_i^p)(\xi_{t,i,i}^{q}-t_i^q) \partial_{u,v,p,q}\psi_m(\xi_{t,i,i}-t_i)\right)_{u,v} \quad \forall u,v\in [d].
$$
Using that 
$\|\sinc'\|_\infty \vee \|\sinc^{(2)}\|_{\infty} \vee \|\sinc^{(3)}\|_\infty \vee \|\sinc^{(4)}\|_\infty \leq 1/2$, 
we obtain that $\partial_{u,v,p,q}\psi_m(\xi_{t,i,i}-t_i) \lesssim m^4$. Therefore, for any $(u,v)\in [d]^2$, we have:
\begin{eqnarray*}
\left|
\frac{\alpha_i}{12}  A(\xi_{t,i,i}-t_i)_{u,v}\right| &\leq& 
\|\alpha\|_{\infty} \sum_{p=1}^d \sum_{q=1}^d 
\left|\xi_{t,i,i}^{p}-t_i^p \right| \left| \xi_{t,i,i}^{q}-t_i^q \right| \left|\partial_{u,v,p,q}\psi_m(\xi_{t,i,i}-t_i)\right| \\
& \lesssim & \|\alpha\|_{\infty} m^4 \sum_{p=1}^d \left| \xi_{t,i,i}^{p}-t_i^p \right| \sum_{q=1}^d \left| \xi_{t,i,i}^{q}-t_i^q \right| \\
& \lesssim &d m^4 \|\alpha\|_{\infty} \epsilon^2,
\end{eqnarray*}
where the last line comes from the Cauchy-Schwarz inequality.
Again, the inequality $\rho(M) \leq \|M\|_\infty$ and Proposition \ref{prop:sol_alpha_beta}   yield:
\begin{equation}\label{eq:c3}
\left|\frac{1}{2} (t-t_i)^T \frac{\alpha_i}{12} A(\xi_{t,i,i} - t_i) (t-t_i)\right| \lesssim d^2 m^4 \|\alpha\|_{\infty} \epsilon^2 \|t-t_i\|_2^2
\lesssim d^2 m^4   \epsilon^2 \|t-t_i\|_2^2.
\end{equation}

The last term of $C_2$ is studied into two steps. We first consider the situation when $k \neq i$:  the triangle inequality, $iv)$ of Lemma \ref{lemma:borne_derivees} and the inequality $\rho(M) \leq \|M\|_{\infty}$ yield:
\begin{eqnarray*}
\rho \left( 
\sum_{k \neq i} \sum_{u=1}^d \beta_i^u D^2(\partial_u \psi_m)(\tilde{\xi}_{t,i,k}^u-t_i)\right) & \leq & K \|\beta\|_{\infty} d \sup_{1 \leq u \leq d} \rho\left( D^2(\partial_u \psi_m)(\tilde{\xi}_{t,i,k}^u-t_i)\right), \\
& \lesssim & K \|\beta\|_{\infty} d \times \left( d \times  \frac{d^2}{m \Delta^4}\right), \\
& \lesssim & K \|\beta\|_{\infty} \frac{d^4}{m \Delta^4}.
\end{eqnarray*}
Hence, we deduce from Proposition \ref{prop:sol_alpha_beta} that:
\begin{equation}\label{eq:c4}
\rho \left( 
\sum_{k \neq i} \sum_{u=1}^d \beta_i^u D^2(\partial_u \psi_m)(\tilde{\xi}_{t,i,k}^u-t_i)\right) \lesssim K \times \frac{1}{m} \frac{Kd^2}{m^4\Delta^4} \times \frac{d^4}{m\Delta^4} \lesssim  m^2 \left(\frac{K d^3}{m^4 \Delta^4}\right)^2.
\end{equation}

Now, we consider the situation where $k=i$. For any pair $(u,v) \in [d]^2$:
\begin{eqnarray*}
\sum_{w =1}^d \beta_i^w \partial_{u,v,w}(\psi_m)(\tilde{\xi}_{t,i,i}^u-t_i) 
&\lesssim& d \|\beta\|_{\infty} m^3 (m \epsilon+(m \epsilon)^3) , \\
&\lesssim& d \times \frac{1}{m} \frac{K d^2}{m^4 \Delta^4} \times m^3 (m \epsilon +(m \epsilon)^3), \\
&\lesssim &  m^2 \times \frac{Kd^3}{m^4\Delta^4} \times (m \epsilon +(m \epsilon)^3)  ,
\end{eqnarray*}
where we used $iv)$ of Lemma~\ref{lemma:tools},  $\nabla \psi_m(0)=0$, $D^3 \psi_m(0)=0$ and $\|m (\tilde{\xi}_{t,i,i}^u-t_i)\|_2 \leq m \epsilon$ and $i)$ of Proposition \ref{prop:sol_alpha_beta}.

\noindent
Using the previous bounds, we then conclude that 
\begin{eqnarray}\label{eq:c5}
\rho \left( 
 \sum_{u=1}^d \beta_i^u D^2(\partial_u \psi_m)(\tilde{\xi}_{t,i,i}^u-t_i)\right)
 & \lesssim & m^2\left(\frac{Kd^3}{m^4\Delta^4} \right)^2+ m^2 \frac{Kd^3}{m^4\Delta^4} ((m\epsilon) +(m\epsilon)^3)\nonumber  \\
&\lesssim & m^2 \left\{ 1 + (m \epsilon) +(m \epsilon)^3)  \right\} 
\end{eqnarray}
provided $ \frac{Kd^3}{m^4 \Delta^4} \leq \mathcal{C}$ for a constant $\mathcal{C}$ small enough. 

We now plug Equations~\eqref{eq:c1},~\eqref{eq:c2},~\eqref{eq:c3},~\eqref{eq:c4} and~\eqref{eq:c5} in $C_2(t)$ and deduce that a constant $\square$ exists such that
$$
\frac{1}{2} (t-t_i)^T C_2(t)(t-t_i) \leq m^2 \|t-t_i\|_2^2 \left[ \underbrace{
-\frac{2}{3} \alpha_i   + \square \left[ \frac{K d^3}{m^4 \Delta^4} +  d^2 m^2 \epsilon^2  +  [1+m \epsilon + (m \epsilon)^3] \right]
}_{:=A_{\epsilon,m}}\right].
$$
Then, we choose $\epsilon$ and $m$ such that $A_{\epsilon,m} \leq - \frac{\alpha_i}{3}$. A careful inspection of the above terms prove that a sufficiently small $\upsilon$  and large enough $C$  (both independent of $d$, $K$ and $\Delta$) exist such that
\begin{equation}\label{eq:nearup}
\epsilon \leq  \frac{\upsilon}{m d} \quad \text{and} \quad m \geq C \frac{K^{1/4}d^{3/4}}{\Delta} \Longrightarrow \frac{1}{2} (t-t_i)^T C_2(t)(t-t_i)   \leq - \frac{\alpha_i m^2}{3}  \|t-t_i\|_2^2.
\end{equation}

\paragraph{Far region $\mathds{F}(\epsilon)$}

The relationship between $\epsilon,m$ and $d$ being established in (\ref{eq:nearup}), we are looking for a value of $\eta>0$ such that 
$$ t \in \bbF(\epsilon) \Rightarrow | p_m(t) | < 1-\eta.$$
The definition of $p_m$ and the Cauchy-Schwarz inequality yield
$$
|p_m(t) | \leq \sum_{k=1}^K   |\alpha_k| |\psi_m(t-t_k)| + \sum_{k=1}^K \|\beta_k\|_2 \|\nabla \psi_m(t-t_k)\|_2  .
$$
We  consider the second term of the right hand side with the help of Lemma \ref{lemma:tools} and Proposition~\ref{prop:sol_alpha_beta}:
\begin{eqnarray*}
\sum_{k=1}^K
\|\beta_k\|_2 \|\nabla \psi_m(t-t_k)\|_2 & \lesssim & \sum_{k=1}^K \|\beta_k\|_2  m \|\nabla \psi\|_{\infty} |\psi(m (t-t_k))^3| , \\
& \lesssim & \sqrt{K}  \times \frac{\sqrt{K}}{m\sqrt{d}} \frac{Kd^3}{m^4\Delta^4} \times m, \\
& \lesssim & \frac{K}{\sqrt{d}} \times \frac{Kd^3}{m^4\Delta^4}.
\end{eqnarray*}

In particular, there exists a constant $\check C$ such that
$$ |p_m(t)|  \leq \sum_{k=1}^K |\alpha_k| \left| \psi_m(t-t_k)\right| + \check C \frac{K}{\sqrt{d}} \frac{K d^3}{m^4 \Delta^4}. $$
Let $t \in \bbF(\epsilon)$ and $t_{i^\star}$ the closest point of $t$ in the set $\lbrace t_1,\dots, t_K \rbrace$, the triangle inequality shows that $ \forall k \not = i^\star$, we have $\| t-t_k \|_2 > \frac{\Delta}{2}.$
Hence, since $\|\alpha\|_{\infty}$ is upper bounded by a universal constant (see Proposition \ref{prop:sol_alpha_beta}), we deduce from $i)$
 Lemma \ref{lemma:borne_derivees} that 
$$ \sum_{k\not = i^\star} |\alpha_k| |\psi_m(t-t_k)| \lesssim \frac{  K d^2}{m^4 \Delta^4}.$$

In the same time, the last term that involves $i^\star$ is upper bounded by
$$ |\alpha_{i^\star}| | \psi_m(t-t_{i^\star}) | \leq \|\alpha \|_\infty \max_{\|x\|_2 > \frac{\upsilon}{md}} |\psi_m(x)| \leq \left(1+C_0\frac{K d^3}{m^4 \Delta^4 }\right) \max_{\|y\|_2 >\upsilon d^{-1}} \psi^4(y),$$
where $C_0$ is a large enough universal constant.
Using that 
$$
|g(x)| = \frac{|\sin(x)|}{|x|} \leq (1-x^2/12) \mathds{1}_{|x| \leq 2} + \frac{1}{2} \mathds{1}_{|x| \geq 2},
$$
and the fact that when $\|y\|_2 \geq \upsilon d^{-1}$, then the absolute value of one of the coordinate of $y$ is greater than $\upsilon d^{-3/2}$, 
we deduce that 
$$
 |\alpha_{i^\star}|  |\psi_m(t-t_{i^\star}) | \leq \left(1+C_0\frac{K d^3}{m^4 \Delta^4 } \right) \left[\left(1-\frac{\upsilon^2}{12 d^3} \right) \vee \frac{1}{2} \right]^4 \leq  \left(1+C_0\frac{K d^3}{m^4 \Delta^4 }\right) (1-\eta)^4,
$$
where $\eta \asymp \upsilon^2 d^{-3}$.
This entails the desired result  as soon as $m$ is chosen such that 
\begin{equation}
m \geq \frac{ K^{1/2} d^{3/2}}{\mathcal{C}\Delta},
\label{eq:condm1}
\end{equation}
for some positive constant $\mathcal{C}$ small enough. It is easy to check that in this case, a small enough   $\upsilon$ exists (independent of $d$, $K$, $m$ and $\Delta$) such that:
\begin{equation}\label{eq:farup}
 m \gtrsim \frac{ K^{1/2} d^{3/2}}{\Delta} \quad \text{and} \quad t \in \bbF\left(\frac{\upsilon}{m d}\right)\Longrightarrow 
|p_m(t)| \leq 1- \frac{\upsilon^2}{  d^3}.
\end{equation}

\paragraph{Conclusion of the interpolation}

To accomodate with conditions~\eqref{eq:nearup} and~\eqref{eq:farup}, we consider an integer $m$ such that $m \gtrsim K^{1/2} d^{3/2} \Delta^{-1}$ and $\epsilon = \upsilon m^{-1} d^{-1}$. We deduce that $p_m$ satisfies in the far region $\bbF(\epsilon)$:
$$
\forall t \in \bbF(\epsilon)\qquad 
-\left(1-\frac{\upsilon^2}{ 2 d^3}\right) \leq p_m(t) \leq \left(1-\frac{\upsilon^2}{ 2 d^3}\right),
$$
while in the near region we have:
$$
\forall i \in \{1, \ldots, K\} \quad 
\forall t \in \bbN_i(\epsilon) \qquad 
0 \leq p_m(t) \leq 1 - \mathcal{C} m^2 \|t-t_{i}\|^2.
$$
We then set $\Pm = p_m^2$. This function satisfies both the constraints and the interpolation conditions in the statement of Theorem~7. 
We then obtain $i)$ and $ii)$. \\

\noindent
\underline{\textbf{Proof of $iii)$:}}

Remark first that $p_m$ is a linear combination of shifted sinus cardinal functions and derivatives of sinus cardinal functions up to the power $4$ used in $\psi$. Moreover, it is straightforward to check that
$$
\cF[\psi^4] = \cF[\psi] \star \cF[\psi]  \star \cF[\psi]  \star \cF[\psi].
$$
Therefore, the Fourier transform of $\psi^4$ has a compact support of size $[-2,2]^d$ since the Fourier transform of the sinus cardinal is the rectangular indicator function of $[-1/2,1/2]$. Using the effect on the Fourier transform of scaling and shifting a function  we deduce that the Fourier transform of $p_m$ has a compact support, which size varies linearly with $m$:
$$\mathrm{Supp}(\cF[p_m]) \subset [-2m,2m]^d.$$
Since $\Pm = p_m^2$, we have $\cF[\Pm]= \cF[p_m] \star \cF[p_m]$ so that 
$$\mathrm{Supp}(\cF[\Pm]) \subset [-4m,4m]^d.
$$

\noindent

We now compute an upper bound of  $\|\Pm\|_2$: the isometry property entails the several inequalities:
$$
\| \Pm\|_2 = \|\cF[\Pm]\|_2  = \|\cF[p_m] \star \cF[p_m]\|_2  \leq \|\cF[p_m]\|_2 \|\cF[p_m]\|_1,
$$
where we used the standard inequality $\|g \star h\|_2 \leq \|g\|_2 \|h\|_1$.

Now, the triangle inequality yields
\begin{align*}
\|\cF[p_m]\|_2& = \left\| \sum_{k=1}^K \alpha_k \cF[\psi_m(.-t_k)] + \cF[\langle \beta_k,\nabla \psi_m(.-t_k) \rangle] \right\|_2,\\
& \leq \sum_{k=1}^K |\alpha_k| \, \|\cF[\psi_m(.-t_k)] \|_2 +\|\cF[\langle \beta_k,\nabla \psi_m(.-t_k) \rangle] \|_2,\\
& \leq K \sup_{1 \leq k \leq K} \left( |\alpha_k| \|\cF[\psi_m(.-t_k)] \|_2 + \|\beta_k\|_2 \|\cF[\nabla \psi_m(.-t_k) ]\|_2\right), \\
&  \leq K \left( \|\alpha\|_{\infty} \|\cF[\psi_m] \|_2 + \sup_{1 \leq k \leq K} \|\beta_k\|_2 \left\| \sqrt{\sum_{i=1}^d \cF[\partial_i \psi_m(.-t_k)]^2 }\right\|_2\right),
\end{align*}
where the last line comes from the Cauchy-Schwarz inequality.

We then deduce that
$$
\|\cF[p_m]\|_2 \leq K \left( \|\alpha\|_{\infty} \|\cF[\psi_m]\|_2 +  \sup_{1 \leq k \leq K} \|\beta_k\|_2 \| |\cF[\nabla \psi_m]|_2 \|_2 \right),
$$ 
where $|\cF[\nabla \psi_m]|_2$ refers to the Euclidean norm of the $d$-dimensional vector $\cF[\nabla \psi_m]$.
 Now, remark that a dilatation by a ratio $m$ yields on $L^2$ norms:
$$
\|\cF[\psi_m]\|_2 \lesssim m^{-d/2} \qquad \text{and} \qquad 
\| |\cF[\nabla \psi_m] |_2 \|_2 \lesssim d m^{-d/2}.
$$
Hence 
$$ \| \mathcal{F}[p_m] \|_2 \lesssim K m^{-d/2}  \left( \|\alpha\|_\infty + d\ \sup_{1 \leq k \leq K} \|\beta_k\|_2   \right).$$

We use a similar argument and obtain that
$$
\|\cF[p_m]\|_1 \leq K \left( \|\alpha\|_{\infty} \|\cF[\psi_m] \|_1 + \sup_{1 \leq k \leq K} \|\beta_k\|_2 \left\| \sqrt{\sum_{i=1}^d \cF[\partial_i \psi_m(.-t_k)]^2 }\right\|_1\right)
$$
In the meantime, the effect of this dilatation on the $L^1$ norms is managed by:
$$
\|\cF[\psi_m]\|_1 = \int | \cF[\psi_m](\xi)| \dd \xi \leq m^{-d} \|\cF[\psi]\|_{\infty} |\mathrm{Supp}(\cF[\psi_m])| \lesssim m^{-d} \|\cF[\psi]\|_{\infty} m^d \lesssim 1,
$$
and with a same argument we obtain that:
$
\| |\cF[\nabla \psi_m]|_2 \|_1 \lesssim d.
$
Hence  
$$ \|\cF[p_m]\|_1 \lesssim K \left( \|\alpha\|_\infty + d\ \sup_{1 \leq k \leq K} \|\beta_k\|_2   \right).$$
We then obtain that
$$
\|\Pm\|_2 \lesssim K^{2} m^{-d/2}\left( 1 + \frac{\sqrt{Kd}}{m} \times \frac{Kd^3}{m^4\Delta^4}  \right) \lesssim K^2 m^{-d/2},
$$  
provided 
\begin{equation}
m > \frac{1}{\mathcal{C}} \left( \frac{K^{1/4} d^{3/4}}{\Delta} \vee \sqrt{Kd}\right)
\label{eq:condm2}
\end{equation}
for some constant $\mathcal{C}$ small enough. 
\noindent

\noindent
\underline{\textbf{Proof of $iv)$:}}
The last point is a simple consequence of the convolution kernel induced by $\Phi$. Since $\varphi$ satisfies $(\mathcal{H}_{4m})$, then $\forall \xi \in [-4m,4m]^d$, we have
$\sigma(\xi) \neq 0$. Hence, we can define $\com$ through its Fourier transform:
\begin{equation}\label{eq:c0mdef}
\forall \xi \in \bR^d \qquad 
\cF[\com](\xi) = \frac{\cF[\Pm](\xi)}{\sigma(\xi)} \mathds{1}_{ \xi \in \mathrm{Supp}( \cF[\Pm])}.
\end{equation}
Moreover, the Fourier transform of $\com$ is naturally compact, which entails that 
$\com \in \bbL$.\\

\noindent
{\it Conclusion:} The constraints (\ref{eq:condm1}) and (\ref{eq:condm2}) together with $\epsilon \sim \frac{1}{md}$ and $\epsilon \leq \Delta/2$ leads to the condition 
$$ m \gtrsim  \frac{K^{1/2} d^{3/2}}{\Delta} \vee \sqrt{Kd} \vee \frac{1}{d\Delta}.$$
Provided $\Delta$ is small or bounded, this condition reduces to $m \gtrsim \frac{K^{1/2}d^{3/2}}{\Delta}$ and $m\gtrsim \sqrt{Kd}$ when $\Delta$ is large. 
\end{proof}

Some useful properties of the sinus cardinal function are detailed in the following basic lemma.
\begin{lemma}\label{lemma:calculpsi}
If $g(x) = \sinc(x)$, then for any $x \in \bR$:
\begin{itemize}
\item[$i)$] $$
g'(x) = \frac{x \cos x - \sin x}{x^2} \quad \text{and} \quad \|g'\|_{\infty} \leq \frac{1}{2}.
$$
\item[$ii)$] $$
g''(x) = -  \frac{(x^2-2) \sin x + 2 x \cos x}{x^3} \quad \text{and} \quad \|g''\|_{\infty} \leq \frac{1}{2}.
$$
\item[$iii)$] $$
g^{(3)}(x) = \frac{3 (x^2-2) \sin x -  x (x^2-6)\cos x}{x^4} \quad \text{and} \quad \|g^{(3)}\|_{\infty} \leq \frac{1}{2}.
$$
\item[$iii)$] $$
g^{(4)}(x) = \frac{4 x (x^2-6) \cos x +   (x^4-12 x^2+24 )\sin x}{x^5} \quad \text{and} \quad \|g^{(4)}\|_\infty \leq \frac{1}{2}.
$$
\end{itemize}
\end{lemma}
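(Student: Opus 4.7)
The plan is to prove both the closed-form expressions for $g^{(k)}$ and the uniform bounds $\|g^{(k)}\|_\infty \le 1/2$ for $k=1,\dots,4$. The two claims are essentially independent: the explicit formulas come from direct differentiation (the hard part being only bookkeeping), while the uniform bounds are most cleanly handled through an integral representation that makes the triangle inequality immediate.

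For the explicit formulas, I would start from the identity $x\,g(x) = \sin x$ and differentiate it $k+1$ times using Leibniz' rule. This gives the recursion
\[
x\,g^{(k+1)}(x) + (k+1)\,g^{(k)}(x) = \sin^{(k+1)}(x),
\]
which expresses $g^{(k+1)}$ in terms of $g^{(k)}$ and a trigonometric function, thereby yielding the formulas for $g',g'',g^{(3)},g^{(4)}$ by successive substitution. Alternatively one could simply apply the quotient rule four times to $\sin(x)/x$; both routes reduce to routine algebra and I would not spell out the intermediate steps, just verify that the resulting rational combinations of $\sin x$ and $\cos x$ match the statement.

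For the uniform bounds, the efficient approach is the integral representation
\[
g(x) \;=\; \frac{\sin x}{x} \;=\; \int_{0}^{1} \cos(x t)\,\dd t,
\]
valid for all real $x$ (and by analytic continuation removing the apparent singularity at $0$). Differentiating under the integral sign gives
\[
g^{(k)}(x) \;=\; \int_{0}^{1} t^k \cos^{(k)}(x t)\,\dd t,
\]
so that $|g^{(k)}(x)| \le \int_0^1 t^k\,\dd t = \frac{1}{k+1}$, which yields $\|g'\|_\infty \le \tfrac12$, $\|g''\|_\infty \le \tfrac13$, $\|g^{(3)}\|_\infty \le \tfrac14$, $\|g^{(4)}\|_\infty \le \tfrac15$. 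All four are bounded by $1/2$ as required.

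There is no real obstacle here: differentiation under the integral sign is justified by the dominated convergence theorem (the integrand and all its $x$-derivatives are bounded by $1$ on the compact interval $[0,1]$), and the closed-form expressions are purely mechanical. The only mild subtlety is ensuring the formulas remain valid at $x = 0$, but this is immediate from the integral representation, which extends each $g^{(k)}$ continuously to the origin with value $(-1)^{\lfloor k/2 \rfloor}/(k+1)$ when $k$ is even (and $0$ when $k$ is odd), consistent with the Taylor expansion of $\text{sinc}$.
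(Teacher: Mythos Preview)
Your proposal is correct. The paper actually states this lemma without proof, labeling it a ``basic lemma'' and moving on, so there is no approach to compare against.

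Your integral representation $g(x)=\int_0^1\cos(xt)\,\dd t$ is the clean way to handle the sup-norm bounds: it gives the sharper inequalities $\|g^{(k)}\|_\infty\le 1/(k+1)$ uniformly and removes any need to analyze the explicit rational-trigonometric formulas near $x=0$. The recursion $x\,g^{(k+1)}(x)+(k+1)g^{(k)}(x)=\sin^{(k+1)}(x)$ obtained from Leibniz on $x\,g(x)=\sin x$ is likewise the right bookkeeping device for the closed forms; carrying it through four steps reproduces exactly the expressions in the statement. Nothing is missing.
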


Some additional ingredients on $\psi_m$ are detailed below where $\psi_m$ is defined by:
\begin{equation}
\psi_m(.) = \psi^4(m .) \text{ with } 
\forall u=(u^1,\ldots,u^d) \in \bR^d \quad \psi(u) = \prod_{j=1}^d \mathrm{sinc}(u^j) 
\text{ and }  \mathrm{sinc}(x) =\frac{\sin(x)}{x}.
\label{eq:psim_init}
\end{equation} 
 In the sequel, we will use the shortcut of notation $\partial_u$ instead of $\partial^{|u|}_u \psi$ for any multi-index $u$.
\begin{lemma}
\label{lemma:tools}
Let $\psi_m$ be the function defined in (\ref{eq:psim_init}). Then
\begin{itemize}
\item $i)$ $\psi_m(0) =g^4(0)^d = 1$. 
\item $ii)$ $\nabla \psi_m(0) =0$ and $$\nabla \psi_m (x) = 4m \psi^3(m x) \nabla \psi(m x) .$$
\item $iii)$ $D^2 \psi_m (0) = - \frac{4}{3} m^2 I_d$ and $$(D^2 \psi_m (x))_{i,j} = 4m^2[ \psi^3 \partial^2_{i,j}  + 3  \psi^2 \partial_i \partial_j ](m x).$$
\item $iv)$ $(D^3 \psi_m)(0) =0$ and 
$$
(D^3 \psi_m (x))_{i,j,k} = 4 m^3 [\psi^3  \partial^3_{i,j,k}+ 6 \psi
 \partial_{i}  \partial_{j}   \partial_{k} + 3 \psi^2[\partial^2_{i,j}  \partial_k  + \partial^2_{i,k}  \partial_j  +\partial^2_{j,k}  \partial_i ]  ](mx)
$$

\item $v)$ Finally
\begin{eqnarray*}
(D^4 \psi_m)(x)_{i,j,k,l} &=& 4 m^4 [\psi^3 \partial_{i,j,k,l}^4 + 3 \psi^2 \square_{i,j,k,l}  + 6 \psi \tilde\square_{i,j,k,l} + 6 \check\square_{i,j,k,l}](m x),
\end{eqnarray*}
with 
$$
\square_{i,j,k,l}=  \partial_i  \partial^3_{j,k,l} + \partial_j  \partial^3_{i,k,l} + \partial_k  \partial^3_{i,j,l} +\partial_l  \partial^3_{i,j,k}   + 
 \partial^2_{i,j}  \partial^2_{k,l} + \partial^2_{i,k}   \partial^2_{j,l} + \partial^2_{i,l}   \partial^2_{j,k},
 $$
 $$
 \tilde\square_{i,j,k,l} = \partial_{i,j}^2\partial_k \partial_l + 
 \partial_{i,k}^2\partial_j \partial_l+\partial_{i,l}^2\partial_k \partial_j
 + \partial_{j,k}^2\partial_i \partial_l+ \partial_{j,l}^2\partial_i \partial_k
+ \partial_{k,l}^2\partial_i \partial_j $$
and 
$$
\check\square_{i,j,k,l} = \partial_i  \partial_j   \partial_k   \partial_l 
$$
\end{itemize}
\end{lemma}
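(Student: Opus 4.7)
The lemma is a routine calculation, so the plan is simply to apply the chain rule (to absorb the dilation $x \mapsto mx$) and the product rule (to differentiate the power $\psi^4$), then specialize at $x=0$ using elementary properties of $\sinc$. The key observation that makes the values at the origin manageable is that $\sinc(x) = 1 - x^2/6 + x^4/120 - \ldots$ is an even function, so $\sinc(0)=1$, $\sinc''(0)=-1/3$, and every odd-order derivative of $\sinc$ vanishes at $0$. Combined with the tensor-product structure $\psi(u) = \prod_{j=1}^d \sinc(u^j)$, this forces $\partial_i \psi(0) = 0$ for all $i$, $\partial^2_{i,j}\psi(0) = -\tfrac13\,\delta_{i,j}$, and $\partial^3_{i,j,k}\psi(0)=0$ (any multi-index of total order $3$ must contain either a factor $\sinc'(0)$ or $\sinc^{(3)}(0)$, both zero).

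For $i)$ and $ii)$: $\psi_m(0)=\psi(0)^4 = 1$ is immediate, and writing $\psi_m(x) = (\psi(mx))^4$ the chain rule gives $\nabla \psi_m(x) = 4m\,\psi^3(mx)\nabla \psi(mx)$, which vanishes at $0$ because $\nabla \psi(0)=0$. For $iii)$, I would differentiate the identity from $ii)$ once more with the product rule:
\[
\partial_{i,j}^2 \psi_m(x) = 4m^2\bigl[\psi^3 \partial^2_{i,j}\psi + 3\psi^2 \partial_i\psi\,\partial_j\psi\bigr](mx),
\]
and then at $x=0$ only the first summand survives and yields $-\tfrac{4}{3}m^2\delta_{i,j}$.

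For $iv)$ and $v)$, the procedure is identical but more bookkeeping-intensive: differentiate the formula of $iii)$ using Leibniz, then do it once more for $v)$. The five factors appearing in each term of $(D^4\psi_m)_{i,j,k,l}$ correspond to choosing how the four differentiations $\partial_i,\partial_j,\partial_k,\partial_l$ distribute among the four copies of $\psi$: either all four hit a single copy (giving $\psi^3 \partial^4_{i,j,k,l}$ with combinatorial factor $1$), three hit one copy and one hits another (giving the $\square$ terms with factor $3$), two-two split (giving the other $\square$ terms, also with factor $3$), two-one-one split (giving the $\widetilde\square$ terms with factor $6$), or one-one-one-one (giving $\check\square$ with factor $24/4 = 6$). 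The resulting coefficients match the multinomial counts $\binom{4}{4}=1$, $\binom{4}{3}=4$ (but grouped into $3+1$), $\binom{4}{2,2}=3$, $\binom{4}{2,1,1}=12$, $\binom{4}{1,1,1,1}=24$, which after dividing by the permutations of the unordered indices yield exactly $1,3,3,6,6$. That $(D^3\psi_m)(0)=0$ in $iv)$ follows because every term in the formula contains a factor $\partial_i\psi$ or $\partial^3 \psi$, all of which vanish at the origin.

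The main (mild) obstacle is purely combinatorial: in $v)$ one must collect the many terms generated by Leibniz into the grouped expressions $\square,\widetilde\square,\check\square$ without miscounting. This is best done by sorting terms by the ``signature'' (partition of $\{i,j,k,l\}$ among the four $\psi$-factors) and reading off the multinomial coefficient. No analytic input beyond the Taylor expansion of $\sinc$ is needed.
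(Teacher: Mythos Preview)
Your proof sketch is correct and matches the paper's approach: the paper states this lemma without proof, treating it as a routine application of the chain and product rules together with the Taylor expansion of $\sinc$ at the origin, which is exactly what you outline.
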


\vspace{1em}
\noindent

Several bounds on the successive derivatives of $\psi_m$ are given in the following lemma.
\begin{lemma} \label{lemma:borne_derivees} 
For any pair $(i,j)$ such that $i \neq j$:
\begin{itemize}
\item $i)$ $ |\psi_m(t_i-t_j)| \lesssim \frac{d^2}{ m^4 \Delta^4}.$
\item $ii)$ $ |\partial_{u}\psi_m(t_i-t_j)|  \lesssim \frac{d^2}{m^3 \Delta^4}$
\item $iii)$ $ |\partial^2_{u,v}\psi_m(t_i-t_j)| \lesssim  \frac{d^2}{m^2 \Delta^4}.$
\item $iv)$ $ |\partial^3_{u,v,w}\psi_m(t_i-t_j)| \lesssim \frac{d^2}{m \Delta^4}.$
\end{itemize}
\end{lemma}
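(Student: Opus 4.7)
The geometric key is that $\|t_i-t_j\|_2 \geq \Delta$ forces at least one coordinate index $\ell^\star=\ell^\star(i,j)\in[d]$ with $|t_i^{\ell^\star}-t_j^{\ell^\star}|\geq \Delta/\sqrt d$, hence $|m(t_i^{\ell^\star}-t_j^{\ell^\star})|\geq m\Delta/\sqrt d$. From the explicit formulas in Lemma~\ref{lemma:calculpsi}, one gets not only $\|\sinc^{(k)}\|_\infty \leq 1/2$ for $k=1,2,3,4$ but also the decay estimate $|\sinc^{(k)}(x)|\leq C_k/|x|$ for all $x\neq 0$ (the numerators are bounded by a polynomial in $x$ of degree at most $k$ with unit-bounded trigonometric coefficients, one degree less than the denominator $x^{k+1}$). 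Consequently every factor $\sinc^{(k)}(m(t_i^{\ell^\star}-t_j^{\ell^\star}))$ that appears in the estimates is bounded by $C\sqrt d/(m\Delta)$, whereas the remaining $d-1$ coordinates will only require the uniform bound $1/2$.

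The proof then reduces to a bookkeeping exercise on the formulas of Lemma~\ref{lemma:tools}. The central observation is that, by the Leibniz rule applied to $\psi_m(x)=\psi^4(mx)$, every $k$-th partial derivative of $\psi_m$ (for $k=0,1,2,3$) is a linear combination, with a global factor $m^k$ and universal numerical coefficients, of monomials each consisting of \emph{exactly four} $\psi$-like factors $(\partial^{\alpha_1}\psi)\cdots(\partial^{\alpha_4}\psi)$ evaluated at $mx$, whose multi-indices satisfy $|\alpha_1|+\cdots+|\alpha_4|=k$. Since each such factor is itself a tensor product $\prod_\ell \sinc^{(j_\ell)}(y^\ell)$, the coordinate $\ell^\star$ always contributes exactly one sub-factor per $\psi$-like factor, so each monomial gains four independent decay contributions of order $\sqrt d/(m\Delta)$ through the $\ell^\star$-slot.

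Putting these ingredients together, each $k$-th order partial derivative of $\psi_m$ evaluated at $t_i-t_j$ is bounded by a constant times $m^k \cdot (\sqrt d/(m\Delta))^4 = d^2/(m^{4-k}\Delta^4)$, which delivers items $i)$, $ii)$, $iii)$, $iv)$ in a single unified estimate.

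\textbf{Main obstacle.} The work is not conceptual but combinatorial: one must verify that the claimed $\sqrt d/(m\Delta)$ decay at the $\ell^\star$-slot persists in \emph{every} monomial of the Leibniz expansion — of which there are several for $D^2\psi_m$ and more for $D^3\psi_m$ — regardless of which coordinates the outer derivatives act on. This is automatic from the tensor-product structure of $\psi$, but it is the careful accounting of this pattern in each of the cases listed in Lemma~\ref{lemma:tools} that yields the correct $d^2$ numerator and the correct power of $m$ in each of the four estimates.
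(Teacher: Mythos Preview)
Your proposal is correct and follows essentially the same approach as the paper's proof: both use the pigeonhole coordinate $\ell^\star$ with $|t_i^{\ell^\star}-t_j^{\ell^\star}|\geq \Delta/\sqrt d$, the decay $|\sinc^{(k)}(x)|\lesssim 1/|x|$, and the Leibniz structure of $\psi_m=\psi^4(m\cdot)$ from Lemma~\ref{lemma:tools} to extract four factors of $\sqrt d/(m\Delta)$ plus the global $m^k$. The only difference is cosmetic: the paper carries out the bound separately for each $k=0,1,2,3$, writing out the specific monomials from Lemma~\ref{lemma:tools}, whereas you package all four cases into the single observation that every monomial in $\partial^k(\psi^4)$ is a product of exactly four tensor-product factors, each contributing one $\sqrt d/(m\Delta)$ at the $\ell^\star$-slot.
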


\begin{proof}
In what follows, we deliberately choose to omit the multiplicative constants since the rest of the paragraph will be managed in the same way.

\noindent
Point $i)$: we use $|\sinc(x)| \leq |x|^{-1}$ and remark that
$\|t_i-t_j\|_2 \geq \Delta$ so that 
$$
\sum_{\ell=1}^d (t_i^{\ell}-t_j^{\ell})^2 \geq \Delta^2.
$$
We then deduce that
$$\psi_m(t_i-t_j) = \prod_{\ell=1}^d \sinc(m (t^{\ell}_i-t^{\ell}_j))^4
\leq \frac{1}{m^4 (\Delta^2/d)^2}$$
because one coordinate $\ell_0$ exists such that
$|t_i^{\ell_0}-t_j^{\ell_0}|^2 \geq \Delta^2 d^{-1}$.

\noindent
Point $ii)$: we use Lemma \ref{lemma:calculpsi}, Lemma \ref{lemma:tools}
and 
$$
\partial_u \psi(t) = g'(t^u) \prod_{\ell \neq u} g(t^\ell) ,
$$
associated with $|g(x)| \vee |g'(x)| \lesssim \frac{1}{|x|}$. It yields
$$
|\partial_u \psi_m(t_i-t_j) |\lesssim m \frac{d^{1/2}}{m \Delta} \left( \frac{d^{1/2}}{m \Delta} \right)^{3} \lesssim \frac{d^2}{m^3   \Delta^4}.
$$
We then obtain $ii)$.

\noindent
Point $iii)$: we still use Lemma \ref{lemma:calculpsi} and Lemma \ref{lemma:tools}, the fact that
$$
\partial^2_{u,v} \psi (t)= \mathds{1}_{u \neq v} g'(t^u)g'(t^v) \prod_{\ell \neq u, \ell \neq v} g(t^\ell) + 
\mathds{1}_{u = v} g''(t^u)  \prod_{\ell \neq u} g(t^\ell) 
$$
and $|g(x)| \vee |g'(x)| \vee |g''(x)| \lesssim \frac{1}{|x|}$. It leads to
$$
| \partial^2_{u,v}\psi_m(t_i-t_j)| \lesssim m^2 \left[\frac{d^{1/2}}{m \Delta} \frac{d^{3/2}}{(m\Delta)^3}+\frac{d}{(m \Delta)^2} \frac{d}{(m\Delta)^2} \right] \lesssim \frac{d^2}{m^2 \Delta^4}.
$$

\noindent
Point $iv)$: the proof follows the same lines with the help of the previous lemmas, we check that
\begin{eqnarray*}
| \partial^3_{u,v,w}\psi_m(t_i-t_j)|
& \lesssim &  m^3 \left[ \frac{d^{1/2}}{m \Delta} \frac{d^{3/2}}{(m \Delta)^3} + \frac{d^{1/2}}{m \Delta} \frac{d^{1/2}}{m \Delta}  \frac{d}{(m \Delta)^2}+\frac{d^{3/2}}{(m \Delta)^3} \frac{d^{1/2}}{m \Delta} \right]
 \lesssim    \frac{d^2}{m \Delta^4}.
\end{eqnarray*}
\end{proof}

\end{document}